\renewcommand{\bar}{\overline}
\renewcommand{\hat}{\widehat}
\renewcommand{\tilde}{\widetilde}
\newtheorem{theorem}{Theorem}[section]
\newtheorem{assp}{Assumption}
\newtheorem{lemma}[theorem]{Lemma}
\newtheorem{remark}[theorem]{Remark}
\newtheorem{defn}{Definition}[section]
\definecolor{wco}{rgb}{0.5,0.2,0.3}
\makeatletter \@addtoreset{equation}{section}
\newcommand{\RR}{\mathbb{R}}
\def\nn{\nonumber}
\title{{\bf Explicit Numerical Approximations for McKean-Vlasov Neutral  Stochastic Differential Delay Equations}
}
\author{
{\bf Yuanping Cui$^{1}$, Xiaoyue Li$^{1}$,~~ Yi Liu$^{1, 2 }$, ~~Chenggui Yuan$^{3 }$}\\
 ~\\
\footnotesize{1. School of Mathematics and Statistics,
Northeast Normal University, Changchun,  130024, China.}\\
\footnotesize{2.  Department of Mathematics and Statistics, Auburn University, Auburn AL 36849, USA.}\\
\footnotesize{3.  Department of Mathematics, Swansea University, Bay Campus, SA1 8EN, UK.}\\
}
\begin{document}

\def\F{{\cal F}}
\def\diag{\hbox{\rm diag}}
\def\trace{\hbox{\rm trace}}\def\refer{\hangindent=0.3in\hangafter=1}
 \def\f{\varphi} \def\r{\rho}\def\e{\varepsilon}
\def\lf{\left} \def\rt{\right}\def\t{\triangle} \def\ra{\rightarrow}
\def\la{\label}\def\be{\begin{equation}}  \def\ee{\end{equation}}
\def\SS{{\mathbb{S}}}

\maketitle
\begin{abstract}
This paper studies the numerical methods to approximate the solutions for a sort of McKean-Vlasov neutral stochastic differential delay equations (MV-NSDDEs) that the growth of the drift coefficients is super-linear. First, We obtain that the solution of MV-NSDDE exists and is unique. Then, we use a stochastic particle method, which is on the basis of the results about the propagation of chaos between particle system and the original MV-NSDDE, to deal with the approximation of the law. Furthermore, we construct the tamed Euler-Maruyama numerical scheme with respect to the corresponding particle system and obtain the rate of convergence. Combining propagation of chaos and the convergence rate of the numerical solution to the particle system, we get a convergence error between the numerical solution and exact solution of the original MV-NSDDE in the stepsize and number of particles.

\end{abstract}
\noindent AMS Subject Classification: 60C30, 60H10, 34K26.

\noindent Keywords: McKean-Vlasov neutral  stochastic differential delay  equations; Super-linear growth; Particle system; Tamed Euler-Maruyama; Strong convergence


\section{Introduction}\label{s-w}
Due to the coefficients depending on the current distribution, McKean-Vlasov (MV) stochastic differential equations (SDEs) are also konwn as  distribution dependent (DD) SDEs described by
\begin{align*}
\mathrm{d}S(t)=b(t,S(t),\mathcal{L}_{t}^{S})\mathrm{d}t+\sigma(t,S(t),\mathcal{L}_{t}^{S})\mathrm{d}B(t),
\end{align*}
where~$\mathcal{L}_{t}^{S}$~is the law (or distribution) of~$S(t)$. The pioneering work of MV-SDEs has been done by McKean in \cite{ MHP1966, MHP1967, MHP1975} connected with a mathematical foundation of the Boltzmann equation. Under Lipschitz type conditions, it is known that MV-SDEs can be regarded as the result generated by the following interacting particle system
\begin{align*}
\mathrm{d}S^a(t)=b\bigg(t,S^a(t),\frac{1}{\Xi}\sum^{\Xi}_{j=1}\delta_{S^j(t)}\bigg)\mathrm{d}t
+\sigma\bigg(t,S^a(t),\frac{1}{\Xi}\sum^{\Xi}_{j=1}\delta_{S^j(t)}\bigg)\mathrm{d}B^{a}(t),~~~a=1,\cdots,\Xi,
\end{align*}
where~$\delta_{S^j(t)}$~denotes the Dirac measure at point~$S^j(t)$, as the particle number~$\Xi$~tends to infinity \cite{MHP1966, MHP1967}. Such results are also called propagation of chaos (see, e.g., \cite{LD2018}). Now, MV-SDEs  have been used as mathematical models widely in biological systems, financial engineering and  physics.

The theory of MV-SDEs has been developed rapidly, including  ergodicity \cite{EA2019}, Feyman-Kac Formula \cite{CD2018},
Harnack inequalities \cite{WFY2018} and so on. Recently, the existence of the unique solution for MV-SDEs which drift coefficients are super-linear growth attracts much attention \cite{DRG2019,WFY2018}. Due to the unsolvability of MV-SDEs, it is necessary to simulate them by some appropriate numerical methods. The explicit Euler-Maruyama (EM) method is common for approximating the solutions of SDEs with global Lipschitz coefficients \cite{KPE1992, MGN1995}. However, Hutzenthaler et al. \cite{H2011} gave several counter examples that the moments of order $p$ for EM numerical solutions diverge to infinity as the coefficients of the SDE are super-linear growth. Since explicit numerical methods are easily implementable and save the cost of computation, different approaches have been explored to modify the EM methods for the nonlinear SDEs (see, e.g., \cite{HM2012, LXY2019, SS2016}),
particularly, \cite{HM2012}  proposed  a tamed EM approximation for SDEs whose coefficients are locally Lipschitz continuous.  

The lag phenomena always happen in the real world. Stochastic differential delay equations (SDDEs) and stochastic differential delay equations with the neutral term (NSDDEs) are used commonly in many fields to represent a broad variety of natural and artificial systems (see, e.g., \cite{BBS2010}). In this article, we are interested in the approximation of MV-NSDDEs with drift coefficient satisfying super-linear growth condition. Compared with the approximation of NSDDEs,  the numerical method for MV-NSDDEs needs to approximate the law at each grid. Recently, a few works paid attention to the numerical methods
for MV-SDEs (see, e.g., \cite{AF2002,  JX2019, BK20, TD1996, BM1997}). Especially, using the tamed EM scheme and the theory of propagation of chaos, Dos-Reis et al. \cite{B21, GSG2018} gave the strong convergence of the approximation solutions for MV-SDEs, \cite{RW19} studied least squares estimator of a kind of path-dependent MV-SDEs by  adopting  a tamed EM algorithm. However, as well as we know, there are few results on the numerical approximations of MV-NSDDEs. Therefore, the main goal of our paper is to establish the strong convergence theory for MV-NSDDEs using the tamed EM method. The neutral-type delay and the law are the key obstacles of the numerical approximation.

According to the ideas from \cite{B21,GSG2018, RW19}, we develop the tamed EM scheme to the particle system converging to the MV-NSDDEs, show the strong convergence between the numerical solutions and the exact solutions of the particle system and further estimate the convergence rate. Finally, using the result of propagation of chaos, we obtain the  strong convergence for MV-NSDDEs and its rate.

The rest organization of this paper is as follows:
In section 2, we introduce some notations and preliminaries. In section 3, 
we give the existence of the unique solution for the MV-NSDDE and introduce the result on propagation of chaos of the particle system. In section 4, we construct the tamed EM scheme for the particle system and obtain the moment boundedness for the corresponding approximation solutions. Furthermore, we estimate the strong convergence rate between the numerical solutions and exact solutions of the particle system, and then get the strong convergence for MV-NSDDEs. In section 5, the numerical simulation of an example is provided to demonstrate the validity of our numerical algorithm.

\section{Notations and Preliminaries}\label{n-p}
 Assume $( \Omega,~\mathcal{F},~\mathbb{P} )$~is a probability space that is complete and has a normal filtration~$\{\mathcal{F}_{t}\}_{t\geq0}$~satisfying the usual conditions (namely, it is right continuous and increasing while~${\mathcal{F}}_{0}$~contains all $\mathbb{P}$-null sets). Let $\mathbb{N}=\{1,2,\cdots\}$~and~$d,~m\in\mathbb{N}$. Denote~$\{B(t)\}_{t\geq0}$~as a standard~$m$-dimensional Brownian motion on the probability space. Let~$\RR_+=\{x\in\RR: x\geq0\}$,~$\RR^{d}$~be~$d$-dimensional Euclidean space, and $\RR^{d\times m}$~be the space of real~$d\times m$-matrices. If~$x\in\RR^{d}$, then denote by~$|x|$~the Euclidean norm. For any matrix~$A$, we denote  its transpose by $A^{T}$ and define its trace norm by $|A|=\sqrt{\mathrm{trace}(AA^{T})}$. Moreover, for any $x\in \RR$~and~$y\in\RR$, we use the notation~$x\wedge y=\min\{x,y\}$~and~$x\vee y=\max\{x,y\}$.
 Let $\emptyset$~denote the empty set and $\inf\emptyset=\infty$. For any~$x\in\RR$, we use $\lfloor x\rfloor$~as the integer part of~$x$.

 For any $q>0$, let~$L^{q}=L^{q}(\Omega;\RR^{d})$ be the space of~$\RR^{d}$-valued random variables $Z$ satisfying $\mathbb{E}[|Z(\omega)|^q]<+\infty$. We also let $\mathcal{L}^{Z}$ be the probability law (or distribution) of a random variable~$Z$. Let $\delta_{x}(\cdot)$~denote the Dirac delta measure concentrated at a point~$x\in\RR^{d}$ and $\mathcal{P}(\RR^{d})$~be a collection of probability measures on~$\RR^{d}$. For $q\geq1$, we let $\mathcal{P}_q(\RR^{d})$ be a collection of probability measures on~$\RR^{d}$ with finite $q$th moments, and define \begin{align*}
W_q(\mu):=\big(\int_{\RR^{d}}|x|^q\mu(\mathrm{d}x)\big)^{\frac{1}q},~ ~~~ \forall  \mu\in\mathcal{P}_q(\RR^{d}).
\end{align*}

\begin{lemma}\label{defn2.1} {\rm\cite[pp.106-107]{VC2008} (  Wasserstein~Distance
)}
Let~$q\geq1$. Define
$$
\mathbb{W}_q(\mu,\nu):=\inf_{\pi \in \mathcal{D}(\mu,\nu)}\bigg\{\int_{\RR^{d}}|x-y|^q\pi(\mathrm{d}x,\mathrm{d}y)\bigg\}^{\frac{1}q},
~\mu,\nu\in\mathcal{P}_q(\RR^{d}),
$$
where~$\mathcal{D}(\mu,\nu)$~denotes the family of all couplings for~$\mu$~and~$\nu$. Then~$\mathbb{W}_q$~is a distance on~$\mathcal{P}_q(\RR^{d})$.
\end{lemma}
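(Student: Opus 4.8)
The plan is to verify directly the three defining properties of a metric on $\mathcal{P}_q(\RR^{d})$: nonnegativity together with the identity of indiscernibles, symmetry, and the triangle inequality. Before this, I would record that $\mathbb{W}_q(\mu,\nu)$ is finite whenever $\mu,\nu\in\mathcal{P}_q(\RR^{d})$: the product measure $\mu\otimes\nu$ belongs to $\mathcal{D}(\mu,\nu)$, so the infimum runs over a nonempty set, and the elementary bound $|x-y|^q\leq 2^{q-1}(|x|^q+|y|^q)$ gives
$$\int_{\RR^{d}\times\RR^{d}}|x-y|^q\,(\mu\otimes\nu)(\mathrm{d}x,\mathrm{d}y)\leq 2^{q-1}\big(W_q(\mu)^q+W_q(\nu)^q\big)<\infty,$$
whence $0\leq\mathbb{W}_q(\mu,\nu)<\infty$. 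Symmetry is immediate: the swap map $(x,y)\mapsto(y,x)$ is a bijection carrying $\mathcal{D}(\mu,\nu)$ onto $\mathcal{D}(\nu,\mu)$ and leaving $|x-y|^q$ invariant, so $\mathbb{W}_q(\mu,\nu)=\mathbb{W}_q(\nu,\mu)$.

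For the identity of indiscernibles, if $\mu=\nu$ I would use the diagonal coupling $\pi=(\mathrm{id}\times\mathrm{id})_{\#}\mu$, which is concentrated on $\{(x,x):x\in\RR^{d}\}$ and hence yields $\int|x-y|^q\,\pi=0$, forcing $\mathbb{W}_q(\mu,\mu)=0$. Conversely, suppose $\mathbb{W}_q(\mu,\nu)=0$. Here I would first invoke the existence of an optimal coupling: since $\mathcal{D}(\mu,\nu)$ is tight and weakly compact and $(x,y)\mapsto|x-y|^q$ is nonnegative and lower semicontinuous, the infimum is attained at some $\pi^{\ast}\in\mathcal{D}(\mu,\nu)$ with $\int|x-y|^q\,\pi^{\ast}=0$. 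Then $\pi^{\ast}$ is concentrated on the diagonal, so for every bounded continuous $\varphi$,
$$\int_{\RR^{d}}\varphi\,\mathrm{d}\mu=\int_{\RR^{d}\times\RR^{d}}\varphi(x)\,\pi^{\ast}(\mathrm{d}x,\mathrm{d}y)=\int_{\RR^{d}\times\RR^{d}}\varphi(y)\,\pi^{\ast}(\mathrm{d}x,\mathrm{d}y)=\int_{\RR^{d}}\varphi\,\mathrm{d}\nu,$$
whence $\mu=\nu$.

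The triangle inequality is the heart of the matter. Given $\mu,\nu,\lambda\in\mathcal{P}_q(\RR^{d})$, I would fix arbitrary $\pi_{12}\in\mathcal{D}(\mu,\nu)$ and $\pi_{23}\in\mathcal{D}(\nu,\lambda)$. The key technical device is the gluing lemma: disintegrating $\pi_{12}$ and $\pi_{23}$ along their common $\nu$-marginal lets one build a measure $\pi$ on $\RR^{d}\times\RR^{d}\times\RR^{d}$ whose $(1,2)$-marginal is $\pi_{12}$ and whose $(2,3)$-marginal is $\pi_{23}$; its $(1,3)$-marginal $\pi_{13}$ then lies in $\mathcal{D}(\mu,\lambda)$. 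Writing coordinates as $(x,y,z)$ and combining the pointwise bound $|x-z|\leq|x-y|+|y-z|$ with Minkowski's inequality in $L^q(\pi)$ (available since $q\geq1$) gives
$$\bigg(\int|x-z|^q\,\pi_{13}(\mathrm{d}x,\mathrm{d}z)\bigg)^{\frac1q}\leq\bigg(\int|x-y|^q\,\pi_{12}(\mathrm{d}x,\mathrm{d}y)\bigg)^{\frac1q}+\bigg(\int|y-z|^q\,\pi_{23}(\mathrm{d}y,\mathrm{d}z)\bigg)^{\frac1q}.$$
Since $\pi_{13}\in\mathcal{D}(\mu,\lambda)$, the left-hand side dominates $\mathbb{W}_q(\mu,\lambda)$; taking the infimum over $\pi_{12}$ and $\pi_{23}$ on the right then yields $\mathbb{W}_q(\mu,\lambda)\leq\mathbb{W}_q(\mu,\nu)+\mathbb{W}_q(\nu,\lambda)$.

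I expect the main obstacle to be the gluing construction, which rests on the disintegration (regular conditional probability) theorem for measures on the Polish space $\RR^{d}$, together with the attainment of the optimal coupling used in the indiscernibles step; once these measure-theoretic facts are in hand, the remaining arguments reduce to Minkowski's inequality and the elementary bijection and diagonal considerations above.
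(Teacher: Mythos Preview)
Your proof is correct and follows the standard route: finiteness via the product coupling, symmetry by the swap map, indiscernibles via an optimal coupling concentrated on the diagonal, and the triangle inequality via the gluing lemma combined with Minkowski's inequality in $L^q$.

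Note, however, that the paper does not give its own proof of this lemma: it is stated with a citation to Villani \cite[pp.~106--107]{VC2008} and used as a black box. Your argument is essentially the one found in that reference, so there is no substantive divergence to discuss---you have simply supplied the details the paper delegates to the literature.
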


\section{ MV-NSDDEs}\label{s-c}
Throughout this paper, let~$\tau>0$~and~$\mathcal{C}([-\tau,0];\mathbb{R}^{d})$~be the space of continuous functions~$\phi$ from~$[-\tau,0]$~to~$\mathbb{R}^{d}$~and~$\|\phi\|=\sup\limits_{-\tau\leq\theta\leq0}|\phi(\theta)|$. For any~$q\geq 0$, let $L^q_{\mathcal{F}_{0}}([-\tau,0];\mathbb{R}^{d})$ denote the set of~$\mathcal{F}_{0}$-measurable~$\mathcal{C}([-\tau,0];\mathbb{R}^{d})$-valued random variables $\xi$ with $\mathbb{E}[\|\xi\|^q]<\infty$.
For any given $T\in(0,\infty)$, consider the MV-NSDDE
\begin{equation} \label{eq3.1}
\mathrm{d}\Big(S(t)-D(S(t-\tau))\Big)=b(S(t),S(t-\tau),\mathcal{L}_{t}^{S})\mathrm{d}t+\sigma(S(t),S(t-\tau),\mathcal{L}_{t}^{S})\mathrm{d}B(t),~~~t\in[0,T],
\end{equation}
where $\mathcal{L}_{t}^{S}$ is the law of~$S(t)$.
$
D:\RR^{d}\rightarrow\RR^{d},~b:\RR^{d}\times\RR^{d}\times\mathcal{P}_{2}(\RR^{d})\rightarrow\RR^{d},~\sigma:\RR^{d}\times\RR^{d}\times\mathcal{P}_{2}(\RR^{d})\rightarrow\RR^{d\times m}
$
are Borel-measurable, and~$b,~\sigma$~are continuous on~$\RR^{d}\times\RR^{d}\times\mathcal{P}_{2}(\RR^{d})$.
 Define $S_t(\theta)=S(t+\theta)$, $-\tau\leq \theta\leq 0$. Then $\{S_t\}_{t\in[0,T]}$ is thought of as a $\mathcal{C}([-\tau,0];\mathbb{R}^{d})$-valued stochastic process.
Assume that the initial condition satisfies
 \begin{equation} \label{eq3.2}
 S_{0} =\xi\in L^{p}_{\mathcal{F}_{0}}([-\tau,0];\mathbb{R}^{d}),~~\hbox{and}~~ \mathbb{E}\big[\sup_{_{-\tau\leq r,t\leq0}}|\xi(t)-\xi(r)|^{p}\big]\leq K_0|t-r|^{\frac{p}{2}}
\end{equation}   for some $p\geq 2$ and $K_0>0$.
And for the sake of simplicity, unless otherwise stated,~$C$~denotes the positive constant whose value may vary with different places of this paper.
\begin{defn}\label{dfn3.1}$\mathrm{(MV-NSDDE~strong~solution~and~uniqueness)}$~~An~$\RR^{d}$-valued stochastic process~$\{S(t)\}_{t\in[-\tau,T]}$~is called a strong solution to~$(\ref{eq3.1})$~if it satisfies the following conditions:
\begin{itemize}
\item[$(1)$]  it is continuous and~$\{S(t)\}_{t\in[0,T]}$~is~$\{\mathcal{F}_{t}\}$-adapted;
\item[$(2)$]  $\mathbb{E}\Big[\int_{0}^{T} |b(S(t),S(t-\tau),\mathcal{L}_{t}^{S})|\mathrm{d}t\Big]<+\infty,~\mathbb{E}\Big[\int_{0}^{T}|\sigma(S(t),S(t-\tau),\mathcal{L}_{t}^{S})|^{2}\mathrm{d}t\Big]<+\infty$;
\item[$(3)$]  $S_0=\xi$,~and for any $t\in[0, T]$,
\begin{align*}
S(t)-D(S(t-\tau))=&\xi(0)-D(\xi(-\tau))+\int_{0}^{t}b(S(r),S(r-\tau),\mathcal{L}_{r}^{S})\mathrm{d}r \\
&+\int_{0}^{t}\sigma(S(r),S(r-\tau),\mathcal{L}_{r}^{S})\mathrm{d}B(r)~~~ ~ \hbox{a.s}.
\end{align*}
\end{itemize}
We say that the solution~$\{S(t)\}_{t\in[-\tau,T]}$~is unique if for any other solution~$\{\bar{S}(t)\}_{t\in[-\tau,T]}$,
$$
P\{S(t)=\bar{S}(t)~for~all~-\tau\leq t\leq T\}=1,
$$
\end{defn}

To prove the main  results of this paper, we prepare several lemmas.
\begin{lemma}\label{l3.1}{\rm\cite{DRG2019} }~~For any~$\mu\in\mathcal{P}_{2}(\RR^{d})$, $\mathbb{W}_{2}(\mu,\delta_{0})=W_{2}(\mu)$.
\end{lemma}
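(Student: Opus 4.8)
The plan is to exploit the fact that the target measure $\delta_{0}$ is a point mass, which forces the set of couplings $\mathcal{D}(\mu,\delta_{0})$ to be a singleton. First I would argue that the only element of $\mathcal{D}(\mu,\delta_{0})$ is the product measure $\mu\otimes\delta_{0}$. Indeed, if $\pi\in\mathcal{P}_{2}(\RR^{d}\times\RR^{d})$ has first marginal $\mu$ and second marginal $\delta_{0}$, then
\begin{align*}
\pi\big(\RR^{d}\times(\RR^{d}\setminus\{0\})\big)=\delta_{0}\big(\RR^{d}\setminus\{0\}\big)=0,
\end{align*}
so $\pi$ is concentrated on $\RR^{d}\times\{0\}$. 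Combining this with the first-marginal constraint $\pi(A\times\RR^{d})=\mu(A)$ for every Borel set $A\subseteq\RR^{d}$, one deduces $\pi(A\times B)=\mu(A)\,\delta_{0}(B)$ for all Borel $A,B$, i.e. $\pi=\mu\otimes\delta_{0}$.

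Next, with the unique coupling in hand, I would evaluate the transport cost directly. Since $\pi=\mu\otimes\delta_{0}$ puts all its mass on $\{y=0\}$,
\begin{align*}
\int_{\RR^{d}\times\RR^{d}}|x-y|^{2}\,\pi(\mathrm{d}x,\mathrm{d}y)=\int_{\RR^{d}}|x-0|^{2}\,\mu(\mathrm{d}x)=\int_{\RR^{d}}|x|^{2}\,\mu(\mathrm{d}x)=\big(W_{2}(\mu)\big)^{2},
\end{align*}
which is finite because $\mu\in\mathcal{P}_{2}(\RR^{d})$. As the infimum defining $\mathbb{W}_{2}$ is taken over the one-point set $\mathcal{D}(\mu,\delta_{0})=\{\mu\otimes\delta_{0}\}$, it equals this single value, and taking square roots yields $\mathbb{W}_{2}(\mu,\delta_{0})=W_{2}(\mu)$.

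The only point requiring genuine care is the measure-theoretic claim that $\mathcal{D}(\mu,\delta_{0})$ is a singleton; everything else reduces to a one-line computation. The main obstacle is therefore to make the support argument rigorous, namely to pass from the fact that $\pi$ vanishes off $\RR^{d}\times\{0\}$, together with the two marginal conditions, to the explicit identity $\pi=\mu\otimes\delta_{0}$. This can be done cleanly by testing against product sets $A\times B$ and invoking a monotone-class (or $\pi$-$\lambda$) argument to extend the identity from a generating algebra of rectangles to the full product $\sigma$-algebra.
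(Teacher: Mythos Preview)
Your argument is correct and is the standard way to see this identity. Note, however, that the paper does not actually supply a proof of this lemma: it is stated with a citation to \cite{DRG2019} and no argument is given, so there is nothing to compare your approach against. Your proof is self-contained and fills in exactly what the paper omits.
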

\begin{lemma}\label{l3.2}{\rm\cite[pp.362, Theorem 5.8]{CR2018}}~~For~$\{Z_{n}\}_{n\geq1}$~a sequence of independent identically distributed (i.i.d. for short) random variables in~$\RR^{d}$~with common distribution~$\mu\in\mathcal{P}(\RR^{d})$~and a constant~$ \Xi\in\mathbb{N}$, we define the empirical measure
$
\bar{\mu}^{\Xi}:=\frac{1}{\Xi}\sum\limits^{\Xi}_{j=1}\delta_{Z_{j}}.
$
If~$\mu\in\mathcal{P}_{q}(\RR^{d})$~with~$q>4$, then
there exists a constant~$C=C(d,q,W_{q}(\mu))$~such that  for each~$\Xi\geq2$,
\begin{align*}
\mathbb{E}\big[\mathbb{W}^{2}_{2}(\bar{\mu}^{\Xi},\mu)\big]\leq C
\left\{
\begin{array}{lll}
\Xi^{-1/2},&~~~1\leq d<4,\\
\Xi^{-1/2}\log(\Xi),&~~~d=4,\\
\Xi^{-2/d},&~~~ 4<d.
\end{array}
\right.
\end{align*}
\end{lemma}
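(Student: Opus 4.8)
The plan is to bound $\mathbb{W}_2^2(\bar\mu^\Xi,\mu)$ by a multiscale (dyadic) transport argument that reduces the geometric transport cost to a weighted sum of one-dimensional mass discrepancies controllable by elementary variance estimates. First I would fix a nested sequence of partitions $\{\mathcal{P}_n\}_{n\ge 0}$ of $\RR^d$ into half-open cubes of side length proportional to $2^{-n}$, localized to a centered ball whose radius is allowed to grow, so that each cube at level $n$ is subdivided into $2^d$ cubes at level $n+1$. The key geometric inequality I would establish is that any two probability measures can be coupled by successively matching their masses on corresponding cubes, at a cost per level of at most $(\mathrm{diam})^2\sim 2^{-2n}$ times the mass moved; this yields
\[
\mathbb{W}_2^2(\bar\mu^\Xi,\mu)\le C\sum_{n\ge 0}2^{-2n}\sum_{Q\in\mathcal{P}_n}\big|\bar\mu^\Xi(Q)-\mu(Q)\big|+(\text{tail}),
\]
where the tail records the mass lying outside the localization region.

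Next I would take expectations. For a fixed cube $Q$ the count $\Xi\,\bar\mu^\Xi(Q)$ is $\mathrm{Binomial}(\Xi,\mu(Q))$, so $\E\big|\bar\mu^\Xi(Q)-\mu(Q)\big|\le\sqrt{\mathrm{Var}(\bar\mu^\Xi(Q))}\le\sqrt{\mu(Q)/\Xi}$, while the trivial bound $\E|\bar\mu^\Xi(Q)-\mu(Q)|\le 2\mu(Q)$ is also available; I would use whichever is smaller cube by cube. Summing the variance bound over the roughly $2^{nd}$ cubes at level $n$ and applying Cauchy--Schwarz gives $\sum_{Q\in\mathcal{P}_n}\sqrt{\mu(Q)}\le C\,2^{nd/2}$, so the expected contribution of level $n$ is at most $C\,2^{-2n}\,2^{nd/2}\,\Xi^{-1/2}=C\,2^{n(d/2-2)}\,\Xi^{-1/2}$.

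Summing $2^{n(d/2-2)}$ then exposes the three regimes: the series converges for $d<4$ and gives $\Xi^{-1/2}$; it diverges logarithmically at the critical dimension $d=4$ and gives $\Xi^{-1/2}\log\Xi$ after truncating at the natural level $n_0$ where $\mu(Q)\sim 1/\Xi$ (so $n_0\sim(1/d)\log\Xi$); and it is dominated by the finest scales for $d>4$. In the last case I would truncate at this optimal $n_0$, using the variance bound below $n_0$ and the trivial bound above it, and balancing the top-level variance term $2^{n_0(d/2-2)}\Xi^{-1/2}$ against the truncation remainder $\sum_{n>n_0}2^{-2n}\sim 2^{-2n_0}$, both of which equal $\Xi^{-2/d}$ under the choice $2^{n_0 d}\sim\Xi$. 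Throughout, the moment hypothesis $\mu\in\mathcal{P}_q(\RR^d)$ with $q>4$ is precisely what is needed to render the tail outside the growing ball negligible relative to these rates, via Markov's inequality applied to $W_q(\mu)^q$. I expect the genuine obstacle to be the first step: constructing the multiscale coupling and proving the weighted-sum transport inequality with the correct dependence on the cube diameters. Once that geometric estimate is in place the probabilistic and summation steps are routine, and the only delicate point that remains is the optimization of $n_0$, which produces both the sharp exponent $2/d$ and the borderline logarithm at $d=4$.
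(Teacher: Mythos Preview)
The paper does not give its own proof of this lemma: it simply cites the result from Carmona--Delarue \cite[pp.~362, Theorem~5.8]{CR2018}, so there is no argument in the paper to compare against. Your sketch is essentially the Fournier--Guillin dyadic partitioning method, which is indeed the standard proof underlying the cited result; the multiscale coupling, the binomial variance bound $\E|\bar\mu^\Xi(Q)-\mu(Q)|\le\sqrt{\mu(Q)/\Xi}$, and the trichotomy produced by summing $2^{n(d/2-2)}$ are all correct in outline. The only point I would flag is that the transport inequality you write in display is slightly off as stated: the quantity $\sum_{Q\in\mathcal P_n}|\bar\mu^\Xi(Q)-\mu(Q)|$ has the wrong homogeneity to bound a squared Wasserstein distance directly, and in the rigorous version one works instead with $\mathbb W_2^2$ bounded by a constant times $\sum_n 2^{-2n}\sum_{Q\in\mathcal P_n}|\bar\mu^\Xi(Q)-\mu(Q)|$ only after an appropriate normalization of the partition to a fixed cube, with the tail handled separately via the $q$th moment. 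This is exactly what you indicate, but be careful when you write it up that the scaling is consistent; otherwise the argument is sound and matches the literature proof.
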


For the theoretical results of this paper including the existence of a unique solution and convergence of numerical approximation for~$(\ref{eq3.1})$-$(\ref{eq3.2})$, we impose the following assumptions.

\begin{assp}\label{a1} $D(0)=0$. And there exist positive constants~ $\lambda<1$ and $K_i~ (i=1,2,3)$ such that
\begin{align*}
  &|D(x_{_{1}})-D(x_{_{2}})|\leq\lambda|x_{1}-x_{2}|,~ ~ |b(0,0,\mu)|\vee|\sigma(0,0,\mu)|\leq K_1\big(1+W_{2}(\mu)\big),\\
&(x_{1}-D(y_{1})-x_{2}+D(y_{2}))^{T} \big(b(x_{1},y_{1},\mu_{1})-b(x_{2},y_{2},\mu_{2})\big)\\
&~~~~~~~~~~~~~~~~~~~~~~~~~~~~~~~~~~\leq K_2\big(|x_{1}-x_{2}|^{2}+|y_{1}-y_{2}|^{2}+\mathbb{W}^{2}_{2}(\mu_{1},\mu_{2})\big)\\
and\\
&|\sigma(x_{1},y_{1},\mu_{1})-\sigma(x_{2},y_{2},\mu_{2})|^2
\leq K_3\big(|x_{1}-x_{2}|^{2}+|y_{1}-y_{2}|^{2}+\mathbb{W}^{2}_{2}(\mu_{1},\mu_{2})\big)
\end{align*}
hold for any $x_{1},~x_{2},~y_{1},~y_{2}\in\RR^{d}$,~$\mu,~\mu_{1},~\mu_{2}\in\mathcal{P}_{2}(\RR^{d})$.
 \end{assp}
\begin{assp}\label{a2}
There exist positive constants $K_4 $~and~$c$~such that
$$
|b(x_{1},y_{1},\mu)-b(x_{2},y_{2},\mu)|\leq K_4\big(1+|x_{1}|^{c}+|y_{1}|^{c}+|x_{2}|^{c}+|y_{2}|^{c}\big)\big(|x_{1}-x_{2}|^{2}+|y_{1}-y_{2}|^{2}\big)
$$
holds for any~$x_{1},~x_{2},~y_{1},~y_{2}\in\RR^{d}$, and~$\mu\in\mathcal{P}_{2}(\RR^{d})$.
\end{assp}

\begin{remark}\label{r3.3}
Under  Assumption~$\ref{a1}$, by virtue of the elementary inequality and Lemma~$\ref{l3.1}$, it is easy to show that
\begin{align} \label{eq3.3}
|\sigma(x,y,\mu)|^2&\leq 2\big|\sigma(x,y,\mu)-\sigma(0,0,\delta_{0})\big|^2+2|\sigma(0,0,\delta_{0})|^2\nn\\
&\leq 2K_3\big(|x|^2+|y|^2+\mathbb{W}_{2}^2(\mu,\delta_{0})\big)+2K^{2}_1(1+W_{2}(\delta_{0}))^2\nn\\
&\leq 2 (K_3+K^{2}_1)\big(1+|x|^2+|y|^2+W_{2}^2(\mu)\big).
\end{align}
 Additionally, one obtains
\begin{align} \label{eq3.4}
|D(x)|=|D(x)-D(0)|\leq\lambda|x|.
\end{align}
This,  together with~$\mathrm{Young}$'s~inequality,  implies that
\begin{align}\label{eq3.5}
\big(x-D(y)\big)^{T}b(x,y,\mu)&=(x-D(y)-0+D(0))^{T}\big(b(x,y,\mu) -b(0,0,\delta_{0})+b(0,0,\delta_{0})\big)\nn\\
&\leq K_2 \big( |x|^{2}+|y|^{2}+W^{2}_{2}(\mu) \big)+ (|x|+|D(y)|)|b(0,0,\delta_{0})|\nn\\
&\leq(K_1+  K_2) \big( 1+|x|^{2}+|y|^{2}+W^{2}_{2}(\mu) \big).
\end{align}
Moreover, Assumptions \ref{a1} and \ref{a2} also yield that
\begin{align}\label{eq3.6}
|b(x,y,\mu)|&\leq|b(x,y,\mu)-b(0,0,\mu)|+|b(0,0,\mu) | \nn\\
&\leq K_{4}\big(1+|x|^{c}+|y|^{c}\big)\big(|x|+|y|\big)+K_{1}(1+W_{2}(\mu)) \nn\\
&\leq4(K_{1}+K_{4})\big(1+|x|^{c+1}+|y|^{c+1}+W_{2}(\mu)\big).
\end{align}
\end{remark}

\begin{lemma}\label{l3.4}{\rm \cite[pp.211, Lemma 4.1]{MX2007}}~~Let~$q>1,~\varepsilon>0$~and~$x,y\in\mathbb{R}$. Then
$$
|x+y|^{q}\leq(1+\varepsilon^{\frac{1}{q-1}})^{q-1}(|x|^{q}+\frac{|y|^{q}}{\varepsilon}).
$$
Especially for  $q=p, ~\varepsilon=( {(1-\lambda)}/{\lambda})^{p-1}$, where $p\geq 2$ and $0<\lambda<1$ are given in \eqref{eq3.2} and Assumption \ref{a1}, respectively, one can derive the following inequality
\begin{align}\label{eq3.7}
|x+y|^{p}\leq \frac{1}{\lambda ^{p-1}} |x|^{p}+\frac{1}{(1-\lambda )^{p-1}} |y|^{p} .
\end{align}
\end{lemma}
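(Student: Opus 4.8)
The plan is to first reduce the inequality to nonnegative reals and then exploit the convexity of the power function. Since $|x+y|\le|x|+|y|$ and $t\mapsto t^{q}$ is increasing on $[0,\infty)$, it suffices to establish
$$
(a+b)^{q}\leq(1+\varepsilon^{\frac{1}{q-1}})^{q-1}\Big(a^{q}+\frac{b^{q}}{\varepsilon}\Big)
$$
for $a=|x|\ge0$ and $b=|y|\ge0$, after which the claim for general $x,y$ follows immediately.

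To prove the reduced inequality I would introduce a weight $\theta\in(0,1)$ and write $a+b=\theta\,(a/\theta)+(1-\theta)\,(b/(1-\theta))$. Because $q>1$, the map $t\mapsto t^{q}$ is convex, so Jensen's inequality yields $(a+b)^{q}\le\theta\,(a/\theta)^{q}+(1-\theta)\,(b/(1-\theta))^{q}=\theta^{1-q}a^{q}+(1-\theta)^{1-q}b^{q}$. The only remaining task is algebraic: choose $\theta$ so that the two coefficients match the asserted normalized form. Taking $\theta=(1+\varepsilon^{\frac{1}{q-1}})^{-1}$ gives $\theta^{1-q}=(1+\varepsilon^{\frac{1}{q-1}})^{q-1}$, while $1-\theta=\varepsilon^{\frac{1}{q-1}}(1+\varepsilon^{\frac{1}{q-1}})^{-1}$ forces $(1-\theta)^{1-q}=(1+\varepsilon^{\frac{1}{q-1}})^{q-1}/\varepsilon$; substituting these back reproduces exactly the stated bound. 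Equivalently, one could split $a\cdot1+(b/\varepsilon^{1/q})\cdot\varepsilon^{1/q}$ and apply H\"older's inequality with conjugate exponents $q$ and $q/(q-1)$, which leads to the same estimate after raising to the $q$th power.

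For the specialization I would simply substitute $q=p$ and $\varepsilon=((1-\lambda)/\lambda)^{p-1}$. Then $\varepsilon^{\frac{1}{p-1}}=(1-\lambda)/\lambda$, so $1+\varepsilon^{\frac{1}{p-1}}=1/\lambda$ and hence $(1+\varepsilon^{\frac{1}{p-1}})^{p-1}=\lambda^{-(p-1)}$. Thus the coefficient of $|x|^{p}$ becomes $\lambda^{-(p-1)}$, and the coefficient of $|y|^{p}$ is $\lambda^{-(p-1)}/\varepsilon=\lambda^{-(p-1)}\cdot(\lambda/(1-\lambda))^{p-1}=(1-\lambda)^{-(p-1)}$, which is precisely \eqref{eq3.7}.

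As for difficulty, there is essentially no deep obstacle here, consistent with the result being a standard elementary inequality (as the citation to \cite{MX2007} indicates). The only mild care needed is to pick the weight $\theta$ (or the H\"older splitting) so that the constants emerge in the stated normalized form rather than in some equivalent form; everything beyond that is a one-line convexity estimate followed by routine arithmetic.
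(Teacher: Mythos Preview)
Your proof is correct. The paper does not supply its own proof of this lemma; it simply quotes the result from \cite[pp.211, Lemma~4.1]{MX2007} and records the specialization \eqref{eq3.7}. Your convexity/Jensen argument (with the weight $\theta=(1+\varepsilon^{1/(q-1)})^{-1}$) is the standard way to derive such an inequality, and your verification of the constants in the special case $q=p$, $\varepsilon=((1-\lambda)/\lambda)^{p-1}$ is accurate.
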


\subsection{Existence and Uniqueness of Solutions}\label{s3.1}
Now we begin to investigate the existence and uniqueness of the exact solution to~$(\ref{eq3.1})$-$(\ref{eq3.2})$.
\begin{theorem}\label{th3.5}
Under  Assumption~$\ref{a1}$, there is a unique strong solution to the equation~$(\ref{eq3.1})$-$(\ref{eq3.2})$. Moreover, for any $T>0$,
\begin{align}\label{eq3.8}
\mathbb{E}\big[\sup_{0\leq t\leq T}|S(t)|^{p}\big]\leq V(1+T)e^{V(1+T)T},
\end{align}
where~$V: =L(p,K_{1},K_{2},K_{3},\mathbb{E}[\|\xi\|^{p}])$ depends on $p,K_{1},K_{2},K_{3},\mathbb{E}[\|\xi\|^{p}]$.
\end{theorem}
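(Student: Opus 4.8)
The plan is to establish the three assertions separately: the a priori bound (3.8), uniqueness, and existence. Throughout I write $Z(t) := S(t) - D(S(t-\tau))$, since the neutral structure makes $Z$, rather than $S$ itself, the natural process on which to run It\^o's formula; the contraction bound (3.4) together with the elementary inequality (3.7) of Lemma~\ref{l3.4} will be used repeatedly to transfer estimates on $Z$ back to $S$. The three sources of difficulty are the merely one-sided (monotone) control of the superlinear drift supplied by (3.5), the neutral term, which forces the constraint $\lambda<1$, and the law-dependence, which must be closed in the Wasserstein metric via Lemma~\ref{l3.1}.

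For the moment bound I would apply It\^o's formula to $|Z(t)|^p$. The drift contribution $p|Z|^{p-2}Z^{T}b(S(t),S(t-\tau),\mathcal{L}_t^{S})$ is controlled by the one-sided estimate (3.5), while the It\^o correction involving $|\sigma|^2$ is controlled by the linear growth bound (3.3); in both, Lemma~\ref{l3.1} lets me replace $W_2^2(\mathcal{L}_t^{S})$ by $\E|S(t)|^2$, which is dominated by $\E[\sup_{s\le t}|S(s)|^p]$ after Jensen. After a Young-inequality splitting of the $|Z|^{p-2}|S|^2$-type products into pure $p$-th powers, taking the supremum over $[0,t]$, and bounding the stochastic integral by the Burkholder--Davis--Gundy inequality, the delayed arguments are absorbed by shifting the time integral, $\int_0^t \E|S(r-\tau)|^p\,\mathrm{d}r \le \tau\,\E\|\xi\|^p + \int_0^t \E[\sup_{u\le r}|S(u)|^p]\,\mathrm{d}r$. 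A Gronwall argument then bounds $\E[\sup_{s\le t}|Z(s)|^p]$, and the last step converts this into the stated bound: writing $|S(s)| \le |Z(s)| + \lambda|S(s-\tau)|$ and taking suprema yields $\sup_{-\tau\le r\le t}|S(r)| \le \|\xi\| + (1-\lambda)^{-1}\sup_{s\le t}|Z(s)|$, where the factor $(1-\lambda)^{-1}$ is exactly what the contraction hypothesis provides.

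For uniqueness, given two solutions $S,\bar S$ sharing the initial datum, I would apply It\^o's formula to $|Z(t)-\bar Z(t)|^2$, with $\bar Z = \bar S - D(\bar S(\cdot-\tau))$. The cross term $2(Z-\bar Z)^{T}\big(b(S,\cdots)-b(\bar S,\cdots)\big)$ is precisely the quantity bounded by the monotonicity hypothesis of Assumption~\ref{a1}, and the diffusion difference is handled by the Lipschitz hypothesis on $\sigma$; the measure terms satisfy $\mathbb{W}_2^2(\mathcal{L}_t^{S},\mathcal{L}_t^{\bar S}) \le \E|S(t)-\bar S(t)|^2$ through the synchronous coupling. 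Converting back from $Z-\bar Z$ to $S-\bar S$ by the same contraction device, a Gronwall inequality whose delayed term is controlled by $\lambda<1$ forces $\E\big[\sup_{s\le t}|S(s)-\bar S(s)|^2\big]=0$.

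Existence is the substantive part. I would realise the solution as a fixed point of the map $\Phi$ sending a flow of measures $\mu=(\mu_t)_{t\in[0,T]}\in C([0,T];\mathcal{P}_2(\RR^d))$ to the law flow $(\mathcal{L}_t^{S^{\mu}})$ of the solution $S^{\mu}$ of the frozen neutral equation obtained by substituting $\mu_t$ for $\mathcal{L}_t^{S}$ in (3.1). For fixed $\mu$ this equation has superlinear but monotone drift, so its well-posedness I would obtain by approximating the coefficients by globally Lipschitz ones (via truncation/mollification), solving the approximate equations by a classical Picard iteration, and passing to the limit using the uniform moment bounds above together with a stopping-time localisation, the monotonicity again delivering uniqueness of $S^{\mu}$. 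The Wasserstein estimate $\mathbb{W}_2^2(\mathcal{L}_t^{S^{\mu}},\mathcal{L}_t^{S^{\nu}}) \le \E|S^{\mu}(t)-S^{\nu}(t)|^2 \le C\int_0^t \sup_{s\le r}\mathbb{W}_2^2(\mu_s,\nu_s)\,\mathrm{d}r$, derived by the same It\^o-plus-contraction computation, then makes $\Phi$ a contraction on a short interval (or in a Bielecki-weighted norm on all of $[0,T]$), and concatenation yields the global solution. I expect the main obstacle to be exactly this step: both the limit passage in the approximation and the contraction estimate for $\Phi$ must be carried out through the one-sided bound (3.5) rather than any global Lipschitz property, all while keeping the neutral factor $(1-\lambda)^{-1}$ under control.
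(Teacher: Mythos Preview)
Your moment bound and uniqueness arguments coincide with the paper's (its Step~3 and the end of Step~2, respectively): It\^o's formula on $|Z|^p$ or $|Z-\bar Z|^2$, the one-sided bound (3.5) for the drift, the Lipschitz bound on $\sigma$, BDG, absorption of the delayed term, and the $(1-\lambda)^{-1}$ conversion from $Z$ back to $S$. For existence the paper runs a Picard iteration at the \emph{process} level rather than a fixed point on measure flows: it sets $S^{(0)}(t)\equiv\xi(0)$, lets $S^{(i)}$ solve the NSDDE with law frozen at $\mathcal{L}^{S^{(i-1)}}$, and shows
\[
\E\Big[\sup_{0\le t\le T_0}|S^{(i+1)}(t)-S^{(i)}(t)|^{p}\Big]\le e^{-1}\,\E\Big[\sup_{0\le t\le T_0}|S^{(i)}(t)-S^{(i-1)}(t)|^{p}\Big]
\]
on a short interval $[0,T_0]$, extracting $S$ as the $L^p$-limit and then concatenating. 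Your map $\Phi$ is the dual view of the same scheme---the paper's $S^{(i)}$ are exactly the Picard iterates of $\Phi$---and the controlling computation (It\^o on $|Z^\mu-Z^\nu|$, monotonicity, Lipschitz, Gronwall, $(1-\lambda)^{-1}$) is identical; the process-level formulation has the mild advantage that the limit process and the convergence of laws come together, without a separate verification that the frozen equation at the fixed-point law reproduces that law. For the well-posedness of each frozen NSDDE with monotone superlinear drift the paper simply invokes \cite{JSY2017}, bypassing the truncation/mollification argument you outline.
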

\begin{proof}
We borrow the techniques for MV-SDEs from \cite{WFY2018}~and \cite{HY2020}. Then we divide the proof which is rather technical  into 3 steps.\\
\underline{Step 1.}
For~$t\in[0,T]$, we define
$$
 S^{(0)}(\theta)=\xi(\theta),~~~\theta\in[-\tau,0];~~~~S^{(0)}(t)\equiv\xi(0).
$$
Then one observes that~$\mathcal{L}_{t}^{S^{(0)}}\equiv\mathcal{L}^{\xi(0)}$.
For~$i\geq1$, let~$S^{(i)}(t) $~solve NSDDE
\begin{align} \label{eq3.9}
 \mathrm{d}\Big(S^{(i)}(t)-D(S^{(i)}(t-\tau))\Big)=& b(S^{(i)}(t),S^{(i)}(t-\tau),\mathcal{L}_{t}^{S^{(i-1)}})\mathrm{d}t\nn\\
&+\sigma(S^{(i)}(t),S^{(i)}(t-\tau),\mathcal{L}_{t}^{S^{(i-1)}})\mathrm{d}B(t),~~~~t\in[0,T],
\end{align}
and the initial data is given by
\begin{align} \label{eq3.10}
 S^{(i)} (\theta)=\xi(\theta),~~~\theta\in[-\tau,0],
\end{align}
where~$\mathcal{L}_{t}^{S^{(i-1)}}$~denotes the law of~$ S^{(i-1)}(t)$.  Now, we claim that~$(\ref{eq3.9})$-$(\ref{eq3.10})$~has a unique solution and the solution has the property that
$$
\mathbb{E}\big[\sup\limits_{0\leq t\leq T}|S^{(i)}(t)|^{p}\big]<\infty.
$$
As $i=1$, it follows by virtue of~\cite{JSY2017}~that under Assumption~$\ref{a1}$, NSDDE~$(\ref{eq3.9})$-$(\ref{eq3.10})$~exists the unique regular solution~$S^{(1)}(t)$. Moreover, due to $(\ref{eq3.7})$ and Assumption \ref{a1}, one can know that
\begin{align*}
|S^{(1)}(t)|^{p}&=|D(S^{(1)}(t-\tau))+S^{(1)}(t)-D(S^{(1)}(t-\tau))|^{p}\nn\\
&\leq\frac{1}{ \lambda^{p-1}}|D(S^{(1)}(t-\tau))|^{p} +\frac{1}{(1-\lambda)^{p-1}}|S^{(1)}(t)-D(S^{(1)}(t-\tau))|^{p}\nn\\
&\leq
 \lambda| S^{(1)}(t-\tau) |^{p} +\frac{1}{(1-\lambda)^{p-1}}|S^{(1)}(t)-D(S^{(1)}(t-\tau))|^{p}
\end{align*}
for any $t\in[0,T]$. This implies that
\begin{align*}
\sup_{0\leq r\leq t}|S^{(1)}(r)|^{p}&\leq
\lambda\|\xi\|^{p}+\lambda\sup_{0\leq r\leq t}|S^{(1)}(r)|^{p}+\frac{1}{(1-\lambda)^{p-1}}\sup_{0\leq r\leq t}\big|S^{(1)}(r)-D(S^{(1)}(r-\tau))\big|^{p}.
\end{align*}
Therefore, we have
\begin{align} \label{eq3.12}
\sup_{0\leq r\leq t}|S^{(1)}(r)|^{p}
&\leq
\frac{\lambda}{1-\lambda}\|\xi\|^{p}+\frac{1}{(1-\lambda)^{p}}\sup_{0\leq r\leq t}\big|S^{(1)}(r)-D(S^{(1)}(r-\tau))\big|^{p}.
\end{align}
The~It$\hat{\mathrm{o}}$~formula leads to
\begin{align*}
|S^{(1)}(t)-D(S^{(1)}(t-\tau))|^{p}\leq |\xi(0)-D(\xi(-\tau))|^{p}+J_{1}(t)+J_{2}(t)+J_{3}(t),
\end{align*}
where
\begin{align*}
J_{1}(t)&=p\int_{0}^{t}\big|S^{(1)}(r)-D(S^{(1)}(r-\tau))\big|^{p-2}\\
&~~~~~~~~\times\Big(S^{(1)}(r)-D(S^{(1)}(r-\tau))\Big)^{T}b(S^{(1)}(r),S^{(1)}(r-\tau),\mathcal{L}_{r}^{S^{(0)}})\mathrm{d}r,
\end{align*}
\begin{align*}
J_{2}(t)&=\frac{p(p-1)}{2}\int_{0}^{t}\big|S^{(1)}(r)-D(S^{(1)}(r-\tau))\big|^{p-2}\big|\sigma(S^{(1)}(r),S^{(1)}(r-\tau),\mathcal{L}_{r}^{S^{(0)}})\big|^{2}\mathrm{d}r
\end{align*}
and
\begin{align*}
J_{3}(t)&=p\int_{0}^{t}\big|S^{(1)}(r)-D(S^{(1)}(r-\tau))\big|^{p-2}\\
&~~~~~~~~\times\big(S^{(1)}(r)-D(S^{(1)}(r-\tau))\big)^{T}\sigma(S^{(1)}(r),S^{(1)}(r-\tau),\mathcal{L}_{r}^{S^{(0)}})\mathrm{d}B(r).
\end{align*}
For any positive integer~$N$, we define the stopping time
$$
\varsigma^{(1)}_{N}=T\wedge\inf\big\{t\in[0,T]:~|S^{(1)}(t)|\geq N\big\}.
$$
Obviously, $\varsigma^{(1)}_{N}\uparrow T$~a.s.~as~$N\rightarrow\infty$.
Then, one can write that
\begin{align} \label{eq3.14}
&\mathbb{E}\Big[\sup_{0\leq r\leq t\wedge\varsigma^{(1)}_{N}}\big|S^{(1)}(r)-D(S^{(1)}(r-\tau))\big|^{p}\Big]\nn\\
&\leq \mathbb{E}\big[\big|\xi(0)-D(\xi(-\tau))\big|^{p}\big]+\mathbb{E}\big[\sup_{0\leq r\leq t\wedge\varsigma^{(1)}_{N}}\big(J_{1}(r)+J_{2}(r)\big)\big]+\mathbb{E}\big[\sup_{0\leq r\leq t\wedge\varsigma^{(1)}_{N}}J_{3}(r)\big].
\end{align}
By~$(\ref{eq3.3})$~and~$(\ref{eq3.5})$, we obtain that
\begin{align*}
&\mathbb{E}\Big[\sup_{0\leq r\leq t\wedge\varsigma^{(1)}_{N}}\big(J_{1}(r)+J_{2}(r)\big)\Big]\nn\\
&\leq C\mathbb{E}\Big[\int_{0}^{t\wedge\varsigma^{(1)}_{N}}\big|S^{(1)}(r)-D(S^{(1)}(r-\tau))\big|^{p-2}\nn\\
&~~~~~~~~~~~~~~~~~~~~~~~~~~\times\big(1+|S^{(1)}(r)|^{2}+|S^{(1)}(r-\tau)|^{2}+W_{2}^{2}(\mathcal{L}_{r}^{S^{(0)}})\big)\mathrm{d}r\Big]\nn\\
&\leq C\mathbb{E}\Big[\int_{0}^{t\wedge\varsigma^{(1)}_{N}}\big(|S^{(1)}(r)|^{p-2}+|D(S^{(1)}(r-\tau))|^{p-2}\big)\nn\\
&~~~~~~~~~~~~~~~~~~~~~~~~~~\times\big(1+|S^{(1)}(r)|^{2}+|S^{(1)}(r-\tau)|^{2}+\mathbb{E}[|\xi(0)|^2]\big)\mathrm{d}r\Big].
\end{align*}
Using Young's inequality and~$\mathrm{H\ddot{o}lder}$'s inequality, we derive that
\begin{align}\label{eq3.15}
\mathbb{E}\big[\sup_{0\leq r\leq t\wedge\varsigma^{(1)}_{N}}(J_{1}(r)+J_{2}(r))\big]&\leq C\mathbb{E}\Big[\int_{0}^{t\wedge\varsigma^{(1)}_{N}}
\big(1+|S^{(1)}(r)|^{p}+|S^{(1)}(r-\tau)|^{p}+\mathbb{E}[|\xi(0)|^p]\big)\mathrm{d}r\Big]\nn\\
&\leq C\Big(\mathbb{E}\Big[\int_{0}^{t\wedge\varsigma^{(1)}_{N}}\big(|S^{(1)}(r)|^{p}+|S^{(1)}(r-\tau)|^{p}
 \big)\mathrm{d}r\Big]+1\Big)\nn\\
&\leq C\Big(\mathbb{E}\Big[\int_{0}^{t\wedge\varsigma^{(1)}_{N}}|S^{(1)}(r)|^{p}\mathrm{d}r\Big]
 +\int_{-\tau}^{0}\mathbb{E}\big[\|\xi\|^{p}\big]\mathrm{d}r+1\Big) \nn\\
&\leq C\Big(\int_{0}^{t}\mathbb{E}\big[\sup_{0\leq v\leq r\wedge\varsigma^{(1)}_{N}}|S^{(1)}(v)|^{p}\big]\mathrm{d}r+1\Big).
\end{align}
The application of the Burkholder-Davis-Gundy (BDG) inequality, Young's inequality and~$\mathrm{H\ddot{o}lder}$'s inequality yields
\begin{align}\label{eq3.16}
&\mathbb{E}\big[\sup_{0\leq r\leq t\wedge\varsigma^{(1)}_{N}}J_{3}(r)\big]\nn\\
&\leq C\mathbb{E}\Big[\Big(\int_{0}^{t\wedge\varsigma^{(1)}_{N}}\big|S^{(1)}(r)-D(S^{(1)}(r-\tau))\big|^{2p-2}
\big|\sigma(S^{(1)}(r),S^{(1)}(r-\tau),\mathcal{L}_{r}^{S^{(0)}})\big|^{2}\mathrm{d}r\Big)^{\frac{1}{2}}\Big]\nn\\
&\leq C\mathbb{E}\Big[\sup_{0\leq r\leq t\wedge\varsigma^{(1)}_{N}}\big|S^{(1)}(r)\!-\!D(S^{(1)}(r-\tau))\big|^{p-1}
\big(\int_{0}^{t\wedge\varsigma^{(1)}_{N}}\big|\sigma(S^{(1)}(r),S^{(1)}(r\!-\!\tau),\mathcal{L}_{r}^{S^{(0)}})\big|^{2}\mathrm{d}r\big)^{\frac{1}{2}}\Big]\nn\\
&\leq\frac{1}{2}\mathbb{E}\Big[\sup_{0\leq r\leq t\wedge\varsigma^{(1)}_{N}}\big|S^{(1)}(r)-D(S^{(1)}(r-\tau))\big|^{p}\Big]\nn\\
&~~~~+C\mathbb{E}\Big[\Big(\int_{0}^{t\wedge\varsigma^{(1)}_{N}}\big(1+|S^{(1)}(r)|^{2}+|S^{(1)}(r-\tau)|^{2}+W^{2}_{2}(\mathcal{L}_{r}^{S^{(0)}})\big)\mathrm{d}r\Big)^{\frac{p}{2}}\Big]\nn\\
&\leq\frac{1}{2}\mathbb{E}\Big[\sup_{0\leq r\leq t\wedge\varsigma^{(1)}_{N}}\big|S^{(1)}(r)\!-\!D(S^{(1)}(r\!-\!\tau))\big|^{p}\!\Big]\!+\!C\Big(\int_{0}^{t}\mathbb{E}\Big[\sup_{0\leq v\leq r\wedge\varsigma^{(1)}_{N}}\!|S^{(1)}(v)|^{p}\Big]\mathrm{d}r\!+\!1\Big).
\end{align}
Inserting~$(\ref{eq3.15})$~and~$(\ref{eq3.16})$ into $(\ref{eq3.14})$ yields
\begin{align*}
\mathbb{E}\Big[\sup_{0\leq r\leq t\wedge\varsigma^{(1)}_{N}}\big|S^{(1)}(r)-D(S^{(1)}(r-\tau))\big|^{p}\Big]\leq C \int_{0}^{t}\mathbb{E}\Big[\sup_{0\leq v\leq r\wedge\varsigma^{(1)}_{N}}|S^{(1)}(v)|^{p}\Big]\mathrm{d}r+C ,
\end{align*}
which together with~$(\ref{eq3.12})$~and the Gronwall inequality  implies
\begin{align*}
\mathbb{E}\big[\sup_{0\leq r\leq t\wedge\varsigma^{(1)}_{N}}|S^{(1)}(r)|^{p}\big]\leq C.
\end{align*}
In view of Fatou's lemma, we can deduce that
\begin{align} \label{eq3.17}
\mathbb{E}\big[\sup_{0\leq r\leq t}|S^{(1)}(r)|^{p}\big]\leq C,   ~~~~~0\leq t\leq T,
\end{align}
 holds for~$i=1$.  Next, assume that the assertion
$$
\mathbb{E}[\sup\limits_{0\leq t\leq T}|S^{(i)}(t)|^{p}]<\infty
$$
holds for~$i=k$.
By changing~ $(S^{(1)},\mathcal{L}^{S^{(0)}},S^{(0)})$ to $(S^{(i+1)},\mathcal{L}^{S^{(i)}},S^{(i)})$ in the above proof and repeating that procedure, we can obtain that
 for~$i=k+1$ the assertion~still holds.

\underline{Step 2.}  We shall  prove that both the  sequence $\{S^{(i)}(\cdot)\}$  and its law $\{\mathcal{L}^{S^{(i)}}\}$ have limit. To this end,  we  show that
\begin{align}\label{eq3.18}
\mathbb{E}\big[\sup_{0\leq r\leq T_0}|S^{(i+1)}(r)-S^{(i)}(r)|^{p}\big]\leq Ce^{-i},~~~~\hbox{for some} ~ T_0\in (0, T]  .
\end{align}
Due to~$(\ref{eq3.7})$ and Assumption \ref{a1} one observes
\begin{align*}
|S^{(i+1)}(t)-S^{(i)}(t)|^{p}
&\leq  \lambda\big|S^{(i+1)}(t -\tau)-S^{(i)}(t-\tau)\big|^{p}
+\frac{1}{(1-\lambda)^{p-1}}|\Gamma^{(i)}(t)|^{p} ,
\end{align*}
where $\Gamma^{(i)}(t)= S^{(i+1)}(t)-D(S^{(i+1)}(t-\tau))-S^{(i)}(t)+D(S^{(i)}(t-\tau)) $,  which implies
\begin{align*}
&\sup_{0\leq r\leq t}\big|S^{(i+1)}(r)-S^{(i)}(r)\big|^{p}\nn\\
&\leq\lambda\sup_{-\tau\leq r\leq 0}\big|S^{(i+1)}(r)\!-\!S^{(i)}(r)\big|^{p}\!+\!\lambda\sup_{0\leq r\leq t}\big|S^{(i+1)}(r)\!-\!S^{(i)}(r)\big|^{p}
+\frac{1}{(1-\lambda)^{p-1}}\sup_{0\leq r\leq t}|\Gamma^{(i)}(r)|^{p}.
\end{align*}
Noting that $S^{(i+1)}(t)$ and $S^{(i)}(t)$ have the same initial value, we drive that
\begin{align}\label{eq3.19}
\sup_{0\leq r\leq t}\big|S^{(i+1)}(r)-S^{(i)}(r)\big|^{p}
\leq\frac{1}{(1-\lambda)^{p}}\sup_{0\leq r\leq t}|\Gamma^{(i)}(r)|^{p}.
\end{align}
By the application of the~It$\hat{\mathrm{o}}$~formula we have
\begin{align}\label{eq3.20}
\mathbb{E}\big[\sup_{0\leq s\leq t} |\Gamma^{(i)}(s)|^{p}\big]\leq  H_{1}+H_{2} ,
\end{align}
where
\begin{align*}
H_{1}&=p\mathbb{E}\bigg[\sup_{0\leq r\leq t}\int_{0}^{r}|\Gamma^{(i)}(v)|^{p-2}\\
&~~~~~~~~\times\Big\{\big(\Gamma^{(i)}(v)\big)^{T}\Big(b(S^{(i+1)}(v),S^{(i+1)}(v\!-\tau),\mathcal{L}_{v}^{S^{(i)}})\!-b(S^{(i)}(v),S^{(i)}(v\!-\tau),\mathcal{L}_{v}^{S^{(i-1)}})\Big)\\
&~~~~~~~~~~~~+\!\frac{p-1}{2}\Big|\sigma(S^{(i+1)}(v)\!,S^{(i+1)}(v\!-\!\tau),\!\mathcal{L}_{v}^{S^{(i)}})
 \!-\!\sigma(S^{(i)}(v),S^{(i)}(v\!-\!\tau),\mathcal{L}_{v}^{S^{(i-1)}})\Big|^{2}\Big\}\mathrm{d}v\bigg]\\
\end{align*}
 and
\begin{align*}
H_{2}&=p\mathbb{E}\bigg[\sup_{0\leq r\leq t}\int_{0}^{r}|\Gamma^{(i)}(v)|^{p-2}\big(\Gamma^{(i)}(v)\big)^{T}\\
&~~~~~~~~\times \Big(\sigma(S^{(i+1)}(v),S^{(i+1)}(v-\tau),\mathcal{L}_{v}^{S^{(i)}})
-\sigma(S^{(i)}(v),S^{(i)}(v-\tau),\mathcal{L}_{v}^{S^{(i-1)}})\Big)\mathrm{d}B(v)\bigg].
\end{align*}
It follows from Assumption~$\ref{a1}$, Young's inequality and~$\mathrm{H\ddot{o}lder}$'s~inequality that
\begin{align}\label{eq3.21}
H_{1}&\leq C\mathbb{E}\bigg[ \int_{0}^{t}|\Gamma^{(i)}(r)|^{p-2}\nn\\
&~~~~~~~~\times\Big(\big|\!S^{(i+1)}(r)\!-\!S^{(i)}(r)\big|^{2}\!+\!\big|S^{(i+1)}(r\!-\!\tau)\!-S^{(i)}(r\!-\!\tau)\big|^{2}\!+\!\mathbb{W}^{2}_{2}(\mathcal{L}_{r}^{S^{(i)}},\!\mathcal{L}_{r}^{S^{(i-1)}})\Big)\mathrm{d}r\!\bigg]\nn\\
&\leq C\mathbb{E}\bigg[\int_{0}^{t}\bigg\{\Big|\big(S^{(i+1)}(r)\!-\!S^{(i)}(r)\big)\!-\!\big(D(S^{(i+1)}(r\!-\!\tau))\!-\!D(S^{(i)}(r\!-\!\tau))\big)\Big|^{p} \nn\\
&~~~~~~~~+\!\Big(\!|S^{(i+1)}(r)\!-\!S^{(i)}(r)|^{2}
\!+\!|S^{(i+1)}(r\!-\!\tau)\!-\!S^{(i)}(r\!-\!\tau)|^{2}
\!+\!\mathbb{W}^{2}_{2}(\mathcal{L}_{r}^{S^{(i)}},\!\mathcal{L}_{r}^{S^{(i-1)}})
\Big)^{\frac{p}{2}}\!\bigg\}\mathrm{d}r\!\bigg]\nn\\
&\leq C\int_{0}^{t}\mathbb{E}\Big[\sup_{0\leq v\leq r}\big|S^{(i+1)}(v)-S^{(i)}(v)\big|^{p}\Big]\mathrm{d}r
+Ct\mathbb{E}\Big[\sup_{0\leq r\leq t}\big|S^{(i)}(r)-S^{(i-1)}(r)\big|^{p}\Big] .
\end{align}
Furthermore, using the BDG inequality one can see that
\begin{align}\label{eq3.22}
H_{2} \leq& C\mathbb{E}\Big[\Big(\int_{0}^{t}|\Gamma^{(i)}(r)|^{2p-2}
\big|\sigma(S^{(i+1)}(r),S^{(i+1)}(r-\tau),\mathcal{L}_{r}^{S^{(i)}})\nn\\
& ~~~~~~~~~~~~~~~~~~~~~~~~~~~ -\sigma(S^{(i)}(r),S^{(i)}(r-\tau),\mathcal{L}_{r}^{S^{(i-1)}})\big|^{2}\mathrm{d}r\Big)^{\frac{1}{2}}\Big]\nn\\
\leq &C\mathbb{E}\Big[\sup_{0\leq r\leq t}|\Gamma^{(i)}(r)|^{p-1}
   \Big(\int_{0}^{t}
\big|\sigma(S^{(i+1)}(r),S^{(i+1)}(r-\tau),\mathcal{L}_{r}^{S^{(i)}})\nn\\
& ~~~~~~~~~~~~~~~~~~~~~~~~~~~-\sigma(S^{(i)}(r),S^{(i)}(r-\tau),\mathcal{L}_{r}^{S^{(i-1)}})\big|^{2}\mathrm{d}r\Big)^{\frac{1}{2}}\Big]\nn\\
\leq&\frac{1}{2}\mathbb{E}\big[\sup_{0\leq r\leq t}|\Gamma^{(i)}(r)|^{p}\big] +C\mathbb{E}\Big[\int_{0}^{t}\Big(\big|S^{(i+1)}(r)-S^{(i)}(r)\big|^{2}
\nn\\
&~~~~~~~~~~~~~~~~~~~~~~~~~~~~~~~~+\big|S^{(i+1)}(r-\tau)-S^{(i)}(r-\tau)\big|^{2}+\mathbb{W}^{2}_{2}(\mathcal{L}_{r}^{S^{(i)}},\mathcal{L}_{r}^{S^{(i-1)}})\Big)^{\frac{p}{2}}\mathrm{d}r\Big]\nn\\
 \leq&\frac{1}{2}\mathbb{E}\big[\sup_{0\leq r\leq t}|\Gamma^{(i)}(r)|^{p}\big] +C\int_{0}^{t}\mathbb{E}\Big[\sup_{0\leq v\leq r}\big|S^{(i+1)}(v)-S^{(i)}(v)\big|^{p}\Big]\mathrm{d}r
 \nn\\
&~~~~~~~~~~~~~~~~~~~~~~~~~~~~~~~~+Ct\mathbb{E}\Big[\sup_{0\leq r\leq t}\big|S^{(i)}(r)-S^{(i-1)}(r)\big|^{p}\Big] .
\end{align}
Substituting~$(\ref{eq3.21})$~and~$(\ref{eq3.22})$~into~$(\ref{eq3.20})$~arrives at
\begin{align}\label{eq3.23}
&\mathbb{E}\big[\sup_{0\leq r\leq t}|\Gamma^{(i)}(r)|^{p}\big]\nn\\
&\leq C\int_{0}^{t}\mathbb{E}\Big[\sup_{0\leq v\leq r}\big|S^{(i+1)}(v)-S^{(i)}(v)\big|^{p}\Big]\mathrm{d}r+Ct\mathbb{E}\Big[\sup_{0\leq r\leq t}\big|S^{(i)}(r)-S^{(i-1)}(r)\big|^{p}\Big].
\end{align}
By~$(\ref{eq3.19})$, $(\ref{eq3.23})$~and the Gronwall inequality, we obtain that
\begin{align}\label{eq3.24}
\mathbb{E}\Big[\sup_{0\leq r\leq t}\big|S^{(i+1)}(r)-S^{(i)}(r)\big|^{p}\Big]\leq Ct e^{Ct}\mathbb{E}\Big[\sup_{0\leq r\leq t}\big|S^{(i)}(r)-S^{(i-1)}(r)\big|^{p}\Big].
\end{align}
Taking~$T_{0}\in (0, T]$~such that~$C_2 T_{0} e^{C_1 T_{0}}\leq e^{-1}$, by~$(\ref{eq3.24})$, we obtain that
\begin{align*}
 \mathbb{E}\Big[\sup_{0\leq r\leq T_{0}}\big|S^{(i+1)}(r)-S^{(i)}(r)\big|^{p}\Big]
&\leq e^{-1}\mathbb{E}\Big[\sup_{0\leq r\leq T_{0}}\big|S^{(i)}(r)-S^{(i-1)}(r)\big|^{p}\Big]\nn\\
&\leq \cdots \nn\\
&\leq e^{-i}\mathbb{E}\Big[\sup_{0\leq r\leq T_{0}}\big|S^{(1)}(r)-S^{(0)}(r)\big|^{p}\Big]\nn\\
&\leq e^{-i}2^{p-1}\Big(\mathbb{E}\big[\sup_{0\leq r\leq T_{0}} |S^{(1)}(r)|^{p}\big]+\mathbb{E}[|\xi(0)|^{p}]\Big)\nn\\
&\leq Ce^{-i}.
\end{align*}
Then, there is an~$\mathcal{F}_{t}$-adapted continuous process~$\{S(t)\}_{t\in[0,T_{0}]}$~such that
\begin{align}\label{eq3.26}
\lim_{i\rightarrow\infty}\sup_{0\leq t\leq T_{0}}\mathbb{W}^{p}_{p}(\mathcal{L}_{t}^{S^{(i)}},\mathcal{L}_{t}^{S})
\leq\lim_{i\rightarrow\infty}\mathbb{E}\Big[\sup_{0\leq t\leq T_{0}}\big|S^{(i)}(t)-S(t)\big|^{p}\Big]=0,
\end{align}
where~$\mathcal{L}_{t}^{S}$~is the law of~$S(t)$.
And~$(\ref{eq3.26})$~implies~$\mathbb{E}[\sup\limits_{0\leq t\leq T_{0}}|S(t)|^{p}]<\infty$. Taking limits on both sides of~$(\ref{eq3.9})$~we derive that~$\mathbb{P}$-$a.s.$
\begin{equation*}
\mathrm{d}\Big(S(t)-D(S(t-\tau))\Big)=
b(S(t),S(t-\tau),\mathcal{L}_{t}^{S})\mathrm{d}t+\sigma(S(t),S(t-\tau),\mathcal{L}_{t}^{S})\mathrm{d}B(t),~~~t\in[0,T_{0}].
\end{equation*}
Taking~$T_{0}$~as the initial time and repeating the  previous procedure, one concludes that~$(\ref{eq3.1})$~has a solution~$\{S(t)\}_{t\in[0,T]}$~with the property
$$
\mathbb{E}[\sup\limits_{0\leq t\leq T}|S(t)|^{p}]<\infty.
$$
It still needs to prove the uniqueness. In fact, assume that~$\{S(t)\}_{t\in[0,T]}$~and~$\{\bar{S}(t)\}_{t\in[0,T]}$~are two solutions to~$(\ref{eq3.1})$ with the same initial data, using the similar way as~$(\ref{eq3.24})$, it is obvious to see
$$
\mathbb{E}\Big[\sup_{0\leq t\leq T}\big|S(t)-\bar{S}(t)\big|^{p}\Big]=0.
$$
\underline{Step 3.} We prove that \eqref{eq3.8} holds. Similar to ~$(\ref{eq3.12})$, we have
\begin{align} \label{eq3.28}
\sup_{0\leq r\leq t}|S(r)|^{p}\leq\frac{\lambda}{1-\lambda}\|\xi\|^{p}+\frac{1}{(1-\lambda)^{p}}\sup_{0\leq r\leq t}|S(r)-D(S(r-\tau))|^{p}.
\end{align}
The~It$\hat{\mathrm{o}}$~formula leads to
\begin{align} \label{eq3.29}
&E\Big[\sup_{0\leq r\leq t}\big|S(r)-D(S(r-\tau))\big|^{p}\Big] \leq E\Big[\big|\xi(0)-D(\xi(-\tau))\big|^{p}\Big]+ \bar{J}_{1}+\bar{J}_{2},
\end{align}
where
\begin{align*}
 \bar{J}_{1}=&pE\Big[\sup_{0\leq r\leq t}\int_{0}^{r}\big|S(v)-D(S(v-\tau))\big|^{p-2}
 \times\Big(\big(S(v)-D(S(v-\tau))\big)^{T}\\
&~~~~~~~~~~~~~~~~\times  b(S(v),S(v-\tau),\mathcal{L}_{v}^{S})+\frac{ p-1 }{2} \big|\sigma(S(v),S(v-\tau),\mathcal{L}_{v}^{S})\big|^{2}\Big)\mathrm{d}v\Big]\\
\end{align*}
and
\begin{align*}
 \bar{J}_{2}=&pE\Big[\sup_{0\leq r\leq t}\int_{0}^{r}\big|S(v)-D(S(v-\tau))\big|^{p-2}\\
 &~~~~~~~~~~~~~~~~~~~~~~~~~\times\big(S(v)-D(S(v-\tau))\big)^{T} \sigma(S(v),S(v-\tau),\mathcal{L}_{v}^{S})\mathrm{d}B(v)\Big].
\end{align*}
By~$(\ref{eq3.3})$~and~$(\ref{eq3.5})$, we obtain
\begin{align}\label{eq3.30}
\bar{J}_{1}&\leq C\mathbb{E}\Big[ \int_{0}^{t}\big|S(r)-D(S(r-\tau))\big|^{p-2}
\Big(1+|S(r)|^{2}+|S(r-\tau)|^{2}+W_{2}^{2}(\mathcal{L}_{r}^{S})\Big)\mathrm{d}r\Big] \nn\\
&\leq C\mathbb{E}\Big[\int_{0}^{t}\Big(1+|S(r)|^{p}+|S(r-\tau)|^{p}+W_{p}^{p}(\mathcal{L}_{r}^{S})\Big)\mathrm{d}r\Big]\nn\\
&\leq C\int_{0}^{t}\mathbb{E}\Big[\big(1+|S(r)|^{p}+|S(r-\tau)|^{p}\big)\Big]\mathrm{d}r\nn\\
&\leq C T+C\int_{0}^{t}\mathbb{E}\big[\sup_{0\leq v\leq r}|S(v)|^{p}\big]\mathrm{d}r+C\int_{0}^{t}\mathbb{E}\big[\sup_{0\leq v\leq r}|S(v-\tau)|^{p}\big]\mathrm{d}r \nn\\
&\leq C T\big(1+\mathbb{E}[\|\xi\|^{p}]\big)+C\int_{0}^{t}\mathbb{E}\big[\sup_{0\leq v\leq r}|S(v)|^{p}\big]\mathrm{d}r .
\end{align}
Using the BDG inequality, Young's inequality and~$\mathrm{H\ddot{o}lder}$'s inequality, we have
\begin{align}\label{eq3.31}
\bar{J}_{2}&\leq C\mathbb{E}\Big[\Big(\int_{0}^{t}\big|S(r)-D(S(r-\tau))\big|^{2p-2}
|\sigma(S(r),S(r-\tau),\mathcal{L}_{r}^{S})|^{2}\mathrm{d}r\Big)^{\frac{1}{2}}\Big]\nn\\
&\leq C\mathbb{E}\Big[\sup_{0\leq r\leq t}\big|S(r)-D(S(r-\tau))\big|^{p-1}
\big(\int_{0}^{t}|\sigma(S(r),S(r-\tau),\mathcal{L}_{r}^{S})|^{2}\mathrm{d}r\big)^{\frac{1}{2}}\Big]\nn\\
&\leq\frac{1}{2}\mathbb{E}\Big[\sup_{0\leq r\leq t}\big|S(r)-D(S(r-\tau))\big|^{p}\Big]\nn\\
&~~~~+C\mathbb{E}\bigg[\bigg(\int_{0}^{t}\Big(1+|S(r)|^{2}+|S(r-\tau)|^{2}+W^{2}_{2}(\mathcal{L}_{r}^{S})\Big)\mathrm{d}r\bigg)^{\frac{p}{2}}\bigg]\nn\\
&\leq\frac{1}{2}\!\mathbb{E}\Big[\!\sup_{0\leq r\leq t}\big|S(r)-\!D(S(r\!-\!\tau))\big|^{p}\Big]
\!+\!C\mathbb{E}\Big[\int_{0}^{t}\Big(1\!+|S(r)|^{p}\!+|S(r-\tau)|^{p}\!+W_{p}^{p}(\mathcal{L}_{r}^{S})\Big)\mathrm{d}r\Big]\nn\\
&\leq\frac{1}{2}\mathbb{E}\Big[\!\sup_{0\leq r\leq t}\big|S(r)\!-\!D(S(r\!-\!\tau))\big|^{p}\!\Big]\!+\!CT\big(1\!+\!\mathbb{E}[\|\xi\|^{p}]\big)\!+\!C\int_{0}^{t}\mathbb{E}\big[\sup_{0\leq v\leq r}|S(v)|^{p}\big]\mathrm{d}r.
\end{align}
Inserting~$(\ref{eq3.30})$~and~$(\ref{eq3.31})$~into~$(\ref{eq3.29})$~yields
\begin{align*}
\mathbb{E}\Big[\sup_{0\leq r\leq t}\big|S(r)-D(S(r-\tau))\big|^{p}\Big]\leq C\int_{0}^{t}\mathbb{E}\big[\sup_{0\leq v\leq r}|S(v)|^{p}\big]\mathrm{d}r+CT\big(1+\mathbb{E}[\|\xi\|^{p}]\big),
\end{align*}
which together with~$(\ref{eq3.28})$~and the Gronwall inequality means that
\begin{align*}
\mathbb{E}\big[\sup_{0\leq r\leq t}|S(r)|^{p}\big]\leq V(1+T)e^{V(1+T)T},
\end{align*}
where~$V$ denotes a constant which depends on $p,K_{1},K_{2},K_{3}$ and $\mathbb{E}[\|\xi\|^{p}]$.\end{proof}

\subsection{Stochastic Particle Method}\label{s3.2}

In this subsection, we take use of stochastic particle method \cite{TD1996, BM1997} to approximate MV-NSDDE~$(\ref{eq3.1})$-$(\ref{eq3.2})$. For any~$ a\in\mathbb{N}$,
$\{B^{a}(t)\}_{t\in[0,T]}$~is~an $m$-dimension Brownian motion and $\{B^{1}(t)\}, \{B^{2}(t)\}, \cdots$~are  independent.
$\xi^{a}\in L^{p}_{\mathcal{F}_{0}}([-\tau,0];\mathbb{R}^{d})$,   $\xi^{1}, \xi^{2}, \cdots$ are independent with the identity distribution ($i.i.d.$) and  $\mathbb{E}[\sup_{_{-\tau\leq r,t\leq0}}|\xi^a(t)-\xi^a(r)|^{p}]\leq K_0|t-r|^{\frac{p}{2}}$.

 Let~$\{S^a(t)\}_{t\in[0,T]}$~denote the unique solution to MV-NSDDE
\begin{equation} \label{eq3.32}
\mathrm{d}\Big(S^a(t)-D(Y^{a}(t-\tau))\Big)=b(S^a(t),S^{a}(t-\tau),\mathcal{L}_{t}^{S^{a}})
\mathrm{d}t+\sigma(S^a(t),S^{a}(t-\tau),\mathcal{L}_{t}^{S^{a}})B^{a}(t),
\end{equation}
with the initial condition~$S^{a}_0 =\xi^{a} $, where~$\mathcal{L}_{t}^{S^{a}}$~denotes the law of~$ S^a(t)$.  We can see  that $ S^{1}(t), S^{2}(t), \cdots$ are $i.i.d.$ for $t\geq 0$.

For each  $\Xi\in\mathbb{N},~1\leq a\leq \Xi$, let~$ S^{a,M}(t) $~be the solution of NSDDE
\begin{align} \label{eq3.33}
\mathrm{d}\Big(S^{a,\Xi}(t)-D(S^{a,\Xi}(t-\tau))\Big)
=&b(S^{a,\Xi}(t),S^{a,\Xi}(t-\tau),\mathcal{L}_{t}^{S,\Xi})\mathrm{d}t\nn\\
&\!+\!\sigma(S^{a,\Xi}(t),S^{a,\Xi}(t\!-\!\tau),\mathcal{L}_{t}^{S,\Xi})\mathrm{d}B^{a}(t),~~~t\in[0,T],
\end{align}
with the initial condition~$ S^{a,\Xi}_0 =\xi^{a} $, where
$\mathcal{L}_{t}^{S,\Xi}(\cdot):=\frac{1}{\Xi}\sum\limits_{j=1}^{\Xi}\delta_{S^{j,\Xi}(t)}(\cdot)$.
We prepare a path-wise propagation of chaos result on NSDDEs~$(\ref{eq3.33})$.

\begin{lemma}\label{le3.6}
If Assumption~$\ref{a1}$~holds and  $p>4$ in \eqref{eq3.2}, then
\begin{align*}
\displaystyle\sup_{1\leq a\leq \Xi}\mathbb{E}\big[\sup_{0\leq t\leq T}|S^a(t)-S^{a,\Xi}(t)|^{2}\big]\leq C\left\{
\begin{array}{lll}
\Xi^{-1/2},~~~&1\leq d<4,\\
\Xi^{-1/2}\log(\Xi),~~~&d=4,\\
\Xi^{-d/2},~~~&4<d,
\end{array}
\right.
\end{align*}
where~$C$~depends on the constant on the right side of \eqref{eq3.8} but is independent of $M$.
\end{lemma}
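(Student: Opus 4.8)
The plan is to follow the standard coupling-and-Gronwall route to propagation of chaos, but adapted to the neutral delay structure. Write $e^a(t):=S^a(t)-S^{a,\Xi}(t)$ for the per-particle error and introduce the associated neutral difference
\[
\Gamma^a(t):=\big(S^a(t)-D(S^a(t-\tau))\big)-\big(S^{a,\Xi}(t)-D(S^{a,\Xi}(t-\tau))\big).
\]
Since $S^a$ and $S^{a,\Xi}$ share the same initial segment $\xi^a$, the error vanishes on $[-\tau,0]$, so exactly as in the derivation of \eqref{eq3.19} (now with exponent $2$ in place of $p$) the neutral contraction $|D(x)-D(y)|\le\lambda|x-y|$ gives
\[
\sup_{0\le r\le t}|e^a(r)|^2\le\frac{1}{(1-\lambda)^2}\sup_{0\le r\le t}|\Gamma^a(r)|^2,
\]
and it therefore suffices to control $\mathbb{E}\big[\sup_{0\le r\le t}|\Gamma^a(r)|^2\big]$.

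Next I would apply the It\^o formula to $|\Gamma^a(t)|^2$. The drift contribution is handled by the monotonicity bound in Assumption~\ref{a1} applied with $\mu_1=\mathcal{L}_t^{S^a}$ and $\mu_2=\mathcal{L}_t^{S,\Xi}$, whose neutral form $x_1-D(y_1)-x_2+D(y_2)$ matches the integrand produced by It\^o's formula; the diffusion contribution is handled by the Lipschitz bound on $\sigma$ in the same assumption, and the stochastic integral is estimated by the BDG and Young inequalities, absorbing $\tfrac12\mathbb{E}[\sup|\Gamma^a|^2]$ on the left exactly as in \eqref{eq3.22}. All three estimates produce the Wasserstein term $\mathbb{W}_2^2(\mathcal{L}_r^{S^a},\mathcal{L}_r^{S,\Xi})$ together with $|e^a(r)|^2$ and $|e^a(r-\tau)|^2$, and the delayed term is reabsorbed using $e^a\equiv0$ on $[-\tau,0]$.

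The decisive step is the treatment of $\mathbb{W}_2^2(\mathcal{L}_r^{S^a},\mathcal{L}_r^{S,\Xi})$. I would insert the empirical measure of the independent copies, $\bar\mu_r^\Xi:=\tfrac1\Xi\sum_{j=1}^\Xi\delta_{S^j(r)}$, and split
\[
\mathbb{W}_2^2(\mathcal{L}_r^{S^a},\mathcal{L}_r^{S,\Xi})\le 2\,\mathbb{W}_2^2(\mathcal{L}_r^{S^a},\bar\mu_r^\Xi)+2\,\mathbb{W}_2^2(\bar\mu_r^\Xi,\mathcal{L}_r^{S,\Xi}).
\]
Because $S^1,\dots,S^\Xi$ are i.i.d.\ with common law $\mathcal{L}_r^{S^a}$, which by Theorem~\ref{th3.5} with $p>4$ lies in $\mathcal{P}_p(\RR^d)$, the first term is precisely the quantity estimated by Lemma~\ref{l3.2}; this supplies the asserted $d$-dependent rate with a constant depending only on the moment bound \eqref{eq3.8} and independent of $\Xi$. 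For the second term, pairing particle $j$ with copy $j$ yields the diagonal coupling $\tfrac1\Xi\sum_{j=1}^\Xi\delta_{(S^j(r),\,S^{j,\Xi}(r))}$, whence $\mathbb{W}_2^2(\bar\mu_r^\Xi,\mathcal{L}_r^{S,\Xi})\le\tfrac1\Xi\sum_{j=1}^\Xi|e^j(r)|^2$.

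Finally I would close the estimate by exchangeability. The joint system $(S^a,S^{a,\Xi})_{a=1}^\Xi$ is invariant under permutations of the index $a$ (i.i.d.\ initial data $\xi^a$ and Brownian motions $B^a$, and a dependence on the law that is symmetric in the particles), so $\Psi(t):=\mathbb{E}\big[\sup_{0\le r\le t}|e^a(r)|^2\big]$ is independent of $a$ and $\mathbb{E}\big[\tfrac1\Xi\sum_{j=1}^\Xi|e^j(r)|^2\big]\le\Psi(r)$. Combining the previous steps gives
\[
\Psi(t)\le C\int_0^t\Psi(r)\,\mathrm{d}r+C\,\varphi_d(\Xi),
\]
where $\varphi_d(\Xi)$ denotes the piecewise right-hand side of the asserted bound coming from Lemma~\ref{l3.2}, and Gronwall's inequality yields $\Psi(T)\le C\varphi_d(\Xi)$, which is the claim since the left side of the statement equals $\sup_{1\le a\le\Xi}\Psi(T)$. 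I expect the main obstacle to be exactly this self-consistent coupling: the term $\tfrac1\Xi\sum_j|e^j|^2$ is not a priori small and must be folded back into $\Psi$ via exchangeability \emph{before} Gronwall applies, while simultaneously the Fournier--Guillin estimate forces the $p>4$ moment hypothesis and requires its constant to depend only on the exact-solution bound \eqref{eq3.8}; the neutral delay compounds this because the reduction from $e^a$ to $\Gamma^a$ and the reabsorption of $|e^a(r-\tau)|^2$ must be completed first.
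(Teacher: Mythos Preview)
Your proposal is correct and follows essentially the same route as the paper: reduce to $\Gamma^a$ via the neutral contraction, apply It\^o's formula to $|\Gamma^a|^2$ and estimate the drift, quadratic-variation, and martingale parts via Assumption~\ref{a1} and the BDG--Young trick, split the Wasserstein term through the empirical measure of the independent copies, invoke Lemma~\ref{l3.2} and the diagonal coupling, then close by exchangeability and Gronwall. The only cosmetic difference is that the paper packages the drift and the $|\sigma|^2$ term into a single integral $Q_1$ and states the identity $\mathbb{E}\big[\tfrac1\Xi\sum_j|e^j(r)|^2\big]=\mathbb{E}\big[|e^a(r)|^2\big]$ directly rather than naming exchangeability.
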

\begin{proof}
For any~$1\leq a\leq \Xi$~and~$t\in[0,T]$, using Lemma~$\ref{l3.4}$, it easy to derive that
\begin{align} \label{eq3.34}
\sup_{0\leq r\leq t}|S^{a}(r)-S^{a,\Xi}(r)|^{2}
\leq\frac{1}{(1-\lambda)^{2}}\sup_{0\leq r\leq t}|\Gamma^{a}(r) |^{2},
\end{align}
where
$
\Gamma^{a}(t)=S^a(t)-D(S^{a}(t-\tau))-S^{a,\Xi}(t)+D(S^{a,\Xi}(t-\tau)).
$
It follows from It$\hat{\mathrm{o}}$~formula that
\begin{align} \label{eq3.35}
\mathbb{E}\big[\sup_{0\leq r\leq t}|\Gamma^{a}(r)|^{2}\big]\leq  Q_{1}+Q_{2},
\end{align}
where
\begin{align*}
 Q_{1}=&\mathbb{E}\bigg[\sup_{0\leq r\leq t}  \int_{0}^{r}\Big\{2 \big(\Gamma^{a}(v)\big)^{T} \Big(b(S^{a}(v),S^{a}(v-\tau),\mathcal{L}_{v}^{S^{a}})-
 b(S^{a,\Xi}(v),S^{a,\Xi}(v-\tau),\mathcal{L}_{v}^{S,\Xi}) \Big)  \\
 &~~~~~~~~~~~~~~~~~~ +\big|\sigma(S^{a}(v),S^{a}(v-\tau),\mathcal{L}_{v}^{S^{a}})-\sigma(S^{a,\Xi}(v),S^{a,\Xi}(v-\tau),\mathcal{L}_{v}^{S,\Xi})\big|^{2}\Big\}\mathrm{d}v\bigg]\\
 \end{align*}
 and
\begin{align*}
Q_{2}=&\mathbb{E}\bigg[\sup_{0\leq r\leq t}\int_{0}^{r}\!2\big(\Gamma^{a}(v)\big)^{T}\!\Big(\sigma(S^{a}(v),S^{a}(v\!-\!\tau),\mathcal{L}_{v}^{S^{a}})\!\\
 &~~~~~~~~~~~~~~~~~~~~~~~~~~~~~~~~-\sigma(S^{a,\Xi}(v),S^{a,\Xi}(v\!-\!\tau),\mathcal{L}_{v}^{S,\Xi})\!\Big)\mathrm{d}B^a(v)\bigg].
\end{align*}
By Assumption~$\ref{a1}$~and Young's inequality, we have
\begin{align}\label{eq3.36}
Q_{1}&\leq C\mathbb{E}\bigg[\int_{0}^{t}
\Big(\big|S^{a}(r)-S^{a,\Xi}(r)\big|^{2}+\big|S^{a}(r-\tau)-S^{a,\Xi}(r-\tau)\big|^{2}
+\mathbb{W}^{2}_{2}(\mathcal{L}_{r}^{S^{a}},\mathcal{L}_{r}^{S,\Xi})\Big)\mathrm{d}r\bigg]\nn\\
&\leq C \int_{0}^{t}\mathbb{E}\Big[\sup_{0\leq v\leq r}\big|S^{a}(v)-S^{a,\Xi}(v)\big|^{2}\Big]\mathrm{d}r
+C \int_{0}^{t}\mathbb{E}\big[\mathbb{W}^{2}_{2}(\mathcal{L}_{r}^{S^{a}},\mathcal{L}_{r}^{S,\Xi})\big]\mathrm{d}r.
\end{align}
Using the~BDG~inequality and~$\mathrm{H\ddot{o}lder}$'s~inequality, we then have
\begin{align}\label{eq3.37}
Q_{2}&\leq C\mathbb{E}\bigg[\Big(\int_{0}^{t}|\Gamma^{a}(r)|^{2 }
\big|\sigma(S^{a}(r),S^{a}(r-\tau),\mathcal{L}_{r}^{S^{a}})-\sigma(S^{a,\Xi}(r),S^{a,\Xi}(r-\tau),\mathcal{L}_{r}^{S,\Xi})\big|^{2}\mathrm{d}r\Big)^{\frac{1}{2}}\bigg]\nn\\
&\leq C\mathbb{E}\bigg[\!\sup_{0\leq r\leq t}|\Gamma^{a}(r)| \Big(\int_{0}^{t}
\big|\sigma(S^{a}(r),S^{a}(r\!-\!\tau),\!\mathcal{L}_{r}^{S^{a}})\!-\!\sigma(S^{a,\Xi}(r),S^{a,\Xi}(r\!-\!\tau),\mathcal{L}_{r}^{S,\Xi})\big|^{2}\mathrm{d}r\Big)^{\frac{1}{2}}\!\bigg]\nn\\
&\leq\frac{1}{2}\mathbb{E}\big[\sup_{0\leq r\leq t}|\Gamma^{a}(r)|^{2}\big]\nn\\
&~~~~+C\mathbb{E}\Big[\int_{0}^{t}\Big(\big|S^{a}(r)\!-\!S^{a,\Xi}(r)\big|^{2}
+\big|S^{a}(r-\tau)-S^{a,\Xi}(r-\tau)\big|^{2}+\mathbb{W}^{2}_{2}(\mathcal{L}_{r}^{S^{a}},\mathcal{L}_{r}^{S,\Xi})\Big)\mathrm{d}r\Big]\nn\\
&\leq\frac{1}{2}\mathbb{E}\big[\sup_{0\leq r\leq t}|\Gamma^{a}(r)|^{2}\big]\nn\\
&~~~~+C\int_{0}^{t}\mathbb{E}\big[\sup_{0\leq v\leq r}\big|S^{a}(v)-S^{a,\Xi}(v)\big|^{2}\big]\mathrm{d}r+C\int_{0}^{t}\mathbb{E}\Big[\mathbb{W}^{2}_{2}(\mathcal{L}_{r}^{S^{a}},\mathcal{L}_{r}^{S,\Xi})\Big]\mathrm{d}r.
\end{align}
Now we introduce another empirical measure constructed from the exact solution to~$(\ref{eq3.32})$~by
$$
\mathcal{L}_{r}^{\Xi}( dx)=\frac{1}{\Xi}\sum_{j=1}^{\Xi}\delta_{S^{j}(r)}(dx).
$$
One notices
\begin{align*}
\mathbb{W}^{2}_{2}(\mathcal{L}_{r}^{S^{a}},\mathcal{L}_{r}^{S,\Xi})&\leq 2 \big(\mathbb{W}^{2}_{2}(\mathcal{L}_{r}^{S^{a}},\mathcal{L}_{r}^{\Xi})
+\mathbb{W}^{2}_{2}(\mathcal{L}_{r}^{\Xi},\mathcal{L}_{r}^{S,\Xi})\big)\nn\\
&= 2 \mathbb{W}^{2}_{2}(\mathcal{L}_{r}^{S^{a}},\mathcal{L}_{r}^{\Xi})+
\frac{2}{\Xi}\sum_{j=1}^{\Xi}\big|S^{j}(r)-S^{j,\Xi}(r)\big|^{2} ,
\end{align*}
and
\begin{align*}
\mathbb{E}\Big[\frac{1}{\Xi}\sum_{j=1}^{\Xi}\big|S^{j}(r)-S^{j,M}(r)\big|^{2}\Big]
=\mathbb{E}\Big[\big|S^{a}(r)-S^{a,\Xi}(r)\big|^{2}\Big].
\end{align*}
Combining the above inequalities arrives at
\begin{align}\label{eq3.38}
\int_{0}^{t}\!\mathbb{E}\big[\mathbb{W}^{2}_{2}(\mathcal{L}_{r}^{S^{a}},\!\mathcal{L}_{r}^{S,\Xi})\big]\mathrm{d}r
\leq 2\!\int_{0}^{t}\!\mathbb{E}\Big[\!\sup_{0\leq v\leq r}\big|S^{a}(v)\!-\!S^{a,\Xi}(v)\big|^{2}\Big]\mathrm{d}r
\!+\!2\!\int_{0}^{t}\!\mathbb{E}\big[\!\mathbb{W}^{2}_{2}(\mathcal{L}_{r}^{S^{a}},\!\mathcal{L}_{r}^{\Xi})\big]\mathrm{d}r.
\end{align}
This together with~$(\ref{eq3.34})$-$(\ref{eq3.37})$~yields
\begin{align*}
\mathbb{E}\big[\sup_{0\leq r\leq t}|S^{a}(r)-S^{a,\Xi}(r)|^{2}\big]&\leq \frac{1}{(1-\lambda)^{2}}\mathbb{E}\big[\sup_{0\leq r\leq t}|\Gamma^{a}(r)|^{2}\big]\nn\\
&\leq C\!\int_{0}^{t}\mathbb{E}\Big[\sup_{0\leq v\leq r}\!\big|S^{a}(v)\!-\!S^{a,\Xi}(v)\big|^{2}\Big]\mathrm{d}r
\!+\!C\int_{0}^{t}\!\mathbb{E}\big[\mathbb{W}^{2}_{2}(\mathcal{L}_{r}^{S^{a}},\!\mathcal{L}_{r}^{\Xi})\big]\mathrm{d}r.
\end{align*}
Therefore, thanks to~$(\ref{eq3.8})$~and Lemma~$\ref{l3.2}$, it follows from the~Gronwall~inequality that
\begin{align*}
\mathbb{E}\big[\sup_{0\leq t\leq T}|S^a(t)-S^{a,\Xi}(t)|^{2}\big]\leq C\left\{
\begin{array}{lll}
\Xi^{-1/2},~~~&1\leq d<4,\\
\Xi^{-1/2}\log(\Xi),~~~&d=4,\\
\Xi^{-d/2},~~~&4<d.
\end{array}
\right.
\end{align*}
\end{proof}

\section{ Tamed EM Method of MV-NSDDEs}\la{cov-rates}
We propose in this section an appropriate explicit method for approximating the solution of~$(\ref{eq3.33})$, and go a further step to give the strong convergence between the numerical solutions and the exact solution of~$(\ref{eq3.33})$. By virtue of propagation of chaos, we establish the result of convergence of the numerical approximation for the original MV-NSDDE~$(\ref{eq3.32})$.

Since the drift coefficients satisfying Assumption $1$ and  Assumption $2$ might be nonlinear, for any $\triangle\in (0,1\wedge \tau)$, we define an auxiliary function
\begin{align}\label{eq4.1}
b_{\triangle}(x,y,\mu)
=\frac{b(x,y,\mu)}{1+\triangle^{\alpha}|b(x,y,\mu)|},
\end{align}
for~$x,y\in\RR^{d}$,~$\mu\in\mathcal{P}_{2}(\RR^{d})$~and~$\alpha\in(0,\frac{1}{2}]$.
To approximate $(\ref{eq3.33})$
we propose the tamed EM scheme. In addition, we let $\frac{\tau}{\triangle}=: n_0$ be an integer (for example, when $\tau$ and $\triangle$ are rational numbers). For the fixed step size~$\triangle$, define
\be\label{eq4.2}\left\{\begin{array}{lcl}
U^{a,\Xi}_{n}=\xi^a(t_{n}),~~~~~~~~~~~a=1,\cdots,\Xi,~~n=-n_0,\cdots,0,\\
U^{a,\Xi}_{n+1}-D(U^{a,\Xi}_{n+1-n_0})= U^{a,\Xi}_{n}-D(U^{a,\Xi}_{n-n_0})
+b_{\triangle}(U^{a,\Xi}_{n},U^{a,\Xi}_{n-n_0},\mathcal{L}^{U_n,\Xi} )\triangle\\
~~~~~~~~~~~~~~~~~~~~~~~~~~~~+\sigma(U^{a,\Xi}_{n},U^{a,\Xi}_{n-n_0},\mathcal{L}^{U_n,\Xi} )\triangle B^{a}_{n},~~~~~n\geq 0,
\end{array}\right.\ee
where  $t_{-n_0}=-\tau$, $t_{n}=n\triangle$ for $n>-n_0$,  $\mathcal{L}^{U_n,\Xi} =\frac{1}{\Xi}\sum\limits_{j=1}^{\Xi}\delta_{U^{j,\Xi}_{n}}$~and~$\triangle B^{a}_{n}=B^{a}(t_{n+1})-B^{a}(t_{n})$. To proceed, we give the  numerical solution of this scheme
\begin{align}\label{eq4.3}
U^{a,\Xi}(t)=U^{a,\Xi}_{n}, ~~~~ t\in[t_n, t_{n+1}).
\end{align}
For convenience, we define $\mathcal{L}^{U,\Xi}_{t}=\frac{1}{\Xi}\sum\limits_{j=1}^{\Xi}\delta_{U^{j,\Xi}(t)}$ and $\rho(t)=\lfloor t/\triangle\rfloor\triangle$~for~$t\geq -\tau$. Then one observes~$\mathcal{L}^{U,\Xi}_{t}=\mathcal{L}^{U,\Xi}_{\rho(t)}=\mathcal{L}^{U_n,\Xi}$, for $t\in [t_n, t_{n+1})$.
Moreover, we give the auxiliary continuous numerical solution
\be \label{eq4.4}\left\{\begin{array}{lll}
 \bar{U}^{a,\Xi}(t) = \xi^{a}(t),&~~ -\tau\leq t\leq 0,\\
\bar{U}^{a,\Xi}(t)\!-\!D\big(\!\bar{U}^{a,\Xi}( t\!-\!\tau )\big)\!=&\!\xi^{a}(0)\!-\!D(\xi^{a}(\!-\tau))\!+\!\int^{t}_{0}\!b_{\triangle}(U^{a,\Xi}(r),\!U^{a,\Xi}(r\!-\!\tau ),\!\mathcal{L}_{r}^{U,\Xi})\!\mathrm{d}r\\
&+\!\int^{t}_{0}\!\sigma (U^{a,\Xi}(r),\!U^{a,\Xi}(r\!-\!\tau ),\!\mathcal{L}_{r}^{U,\Xi})\mathrm{d}B^{a}(r),~~~~ t>0.
\end{array}\right.\ee
It follows from~$(\ref{eq4.4})$~directly that for $t>0$,
\begin{align*}
&\bar{U}^{a,\Xi}(t)-D\big(\bar{U}^{a,\Xi}( t-\tau )\big)\nn\\
&=U^{a,\Xi}_0-D\big(U^{a,\Xi}_{ -n_0}\big)+\sum_{n=0}^{\lfloor t/\triangle\rfloor-1}\int^{t_{n+1}}_{t_n}b_{\triangle}(U^{a,\Xi}_n,U^{a,\Xi}_{n-n_0},\mathcal{L}^{U_n,\Xi})\mathrm{d}r\nn\\
&~~~~+\sum_{n=0}^{\lfloor t/\triangle\rfloor-1}\int^{t_{n+1}}_{t_n}\sigma (U^{a,\Xi}_n,U^{a,\Xi}_{n-n_0},\mathcal{L}^{U_n,\Xi})\mathrm{d} B^{a}(r)\nn\\
&~~~~+\int^{t}_{\rho(t)}b_{\triangle}(U^{a,\Xi}(r),U^{a,\Xi}(r-\tau),\mathcal{L}_{r}^{U,\Xi})\mathrm{d}r+\int^{t}_{\rho(t)}\sigma(U^{a,\Xi}(r),U^{a,\Xi}(r-\tau),\mathcal{L}_{r}^{U,\Xi})\mathrm{d}B^{a}(r)\nn\\
&=U^{a,\Xi}(t)-D\big( {U}^{a,\Xi}( t-\tau )\big)+\int^{t}_{\rho(t)}b_{\triangle}(U^{a,\Xi}(r),U^{a,\Xi}(r-\tau),\mathcal{L}_{r}^{U,\Xi})\mathrm{d}r\nn\\
&~~~~+\int^{t}_{\rho(t)}\sigma(U^{a,\Xi}(r),U^{a,\Xi}(r-\tau),\mathcal{L}_{r}^{U,\Xi})\mathrm{d}B^{a}(r).
\end{align*}
One observes from the above equation that
$$\bar{U}^{a,\Xi}(\rho(t))-D\big(\bar{U}^{a,\Xi}(  \rho(t)-\tau )\big) =U^{a,\Xi}(\rho(t))-D\big( {U}^{a,\Xi}( \varrho(t)-\tau  )\big)=U^{a,\Xi}(t)-D\big( {U}^{a,\Xi}( t-\tau )\big).$$
Due to solving the difference equations we arrive at
\begin{align}\label{eq4.5}
 \bar{U}^{a,\Xi}(\rho(t))  =U^{a,\Xi}(\rho(t)) =U^{a,\Xi}(t)=U^{a,\Xi}_n,~~~~t\in[t_n, t_{n+1}).
\end{align}
Therefore, it is clearly that~$\bar{U}^{a,\Xi}(t)$~and~$U^{a,\Xi}(t)$~take  the same values at the grid points.
\begin{remark}\label{r4.1}
From~$(\ref{eq4.1})$ and \eqref{eq3.5}, one observes
\begin{align}\label{eq4.6}
|b_{\triangle}(x,y,\mu)|\leq \triangle^{-\alpha} \wedge |b(x,y,\mu)|,
\end{align}and
\begin{align}\label{eq4.7}
 \big(x-D(y)\big)^{T}b_{\triangle}(x,y,\mu) \leq(K_1+  K_2) \big( 1+|x|^{2}+|y|^{2}+W^{2}_{2}(\mu) \big).
\end{align}
\end{remark}

\subsection{ Moment estimate}\label{bound}
For~$1\leq a\leq \Xi$, in order to show that the numerical solution produced by~$(\ref{eq4.2})$ converge to the solution of $(\ref{eq3.33})$, in this subsection we establish boundedness of $p$th moment of the numerical solution~$\bar{U}^{a,\Xi}(t)$ of~$(\ref{eq4.4})$.
\begin{lemma}\label{le4.2}
Under Assumption~$\ref{a1}$,
$$
\sup_{1\leq a\leq \Xi}\mathbb{E}\big[\sup_{0\leq t\leq T}|\bar{U}^{a,\Xi}(t)|^{p}\big]\leq C,~~~T\geq0,
$$
where ~$C$~is the constant not only its value may vary with different lines but also is independent of~$\Xi$~and~$\triangle$ from now on.
\end{lemma}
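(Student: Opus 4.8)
The plan is to mirror the moment bound for the exact solution established in Steps~1 and 3 of Theorem~\ref{th3.5}, but now working with the continuous interpolation. Set $Z^{a,\Xi}(t):=\bar{U}^{a,\Xi}(t)-D(\bar{U}^{a,\Xi}(t-\tau))$. First I would peel off the neutral term: applying Lemma~\ref{l3.4} in the form \eqref{eq3.7} together with $|D(x)|\le\lambda|x|$ from \eqref{eq3.4} yields, exactly as in \eqref{eq3.12} and \eqref{eq3.28},
\begin{align*}
\sup_{0\le r\le t}|\bar{U}^{a,\Xi}(r)|^{p}\le\frac{\lambda}{1-\lambda}\|\xi^a\|^{p}+\frac{1}{(1-\lambda)^{p}}\sup_{0\le r\le t}|Z^{a,\Xi}(r)|^{p}.
\end{align*}
Thus it suffices to bound $\mathbb{E}[\sup_{0\le r\le t}|Z^{a,\Xi}(r)|^{p}]$, then invoke Gronwall's inequality and finally take $\sup_{1\le a\le\Xi}$.

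Next I would apply It\^o's formula to $|Z^{a,\Xi}(t)|^{p}$ via the representation \eqref{eq4.4}, producing an initial term $|\xi^a(0)-D(\xi^a(-\tau))|^p$, a drift integral containing the cross term $p|Z^{a,\Xi}(r)|^{p-2}(Z^{a,\Xi}(r))^{T}b_{\triangle}(U^{a,\Xi}(r),U^{a,\Xi}(r-\tau),\mathcal{L}^{U,\Xi}_r)$ together with the curvature term $\tfrac{p(p-1)}{2}|Z^{a,\Xi}(r)|^{p-2}|\sigma(U^{a,\Xi}(r),U^{a,\Xi}(r-\tau),\mathcal{L}^{U,\Xi}_r)|^{2}$, and a martingale term. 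The growth bound \eqref{eq3.3} on $\sigma$, the tamed dissipativity \eqref{eq4.7}, Young's and H\"older's inequalities, and the BDG inequality applied to the martingale part (absorbing $\tfrac12\mathbb{E}[\sup_{0\le r\le t}|Z^{a,\Xi}(r)|^p]$ into the left-hand side) reduce everything, just as in \eqref{eq3.30}--\eqref{eq3.31}, to integrals of $\mathbb{E}[\sup_{0\le v\le r}|\bar{U}^{a,\Xi}(v)|^p]$ plus a constant, after which Gronwall closes the estimate. Here I would also use \eqref{eq4.5} to identify the stepped norms, $|U^{a,\Xi}(r)|=|\bar{U}^{a,\Xi}(\rho(r))|\le\sup_{0\le v\le r}|\bar{U}^{a,\Xi}(v)|$, handle the delayed term over $[0,\tau]$ by shifting it to $\mathbb{E}[\|\xi^a\|^p]$, and bound the empirical measure through $W_p^p(\mathcal{L}^{U,\Xi}_r)=\tfrac{1}{\Xi}\sum_{j}|U^{j,\Xi}(r)|^p$ before taking $\sup_{1\le a\le\Xi}$.

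The main obstacle is the argument mismatch intrinsic to the tamed scheme: It\^o's formula produces the prefactor $|Z^{a,\Xi}(r)|^{p-2}Z^{a,\Xi}(r)$ built from the \emph{continuous} interpolation $\bar{U}^{a,\Xi}(r)$, whereas the coefficients $b_{\triangle}$ and $\sigma$ are evaluated at the \emph{piecewise-constant} arguments $U^{a,\Xi}(r)=\bar{U}^{a,\Xi}(\rho(r))$, so the dissipativity inequality \eqref{eq4.7}---which demands matching arguments---cannot be applied verbatim. I would resolve this by decomposing $Z^{a,\Xi}(r)=Z^{a,\Xi}(\rho(r))+\big(Z^{a,\Xi}(r)-Z^{a,\Xi}(\rho(r))\big)$, where $Z^{a,\Xi}(\rho(r))=U^{a,\Xi}(r)-D(U^{a,\Xi}(r-\tau))$ now matches the coefficient arguments and is controlled by \eqref{eq4.7}, while the single-step increment
\begin{align*}
Z^{a,\Xi}(r)-Z^{a,\Xi}(\rho(r))=b_{\triangle}(\cdot)\,(r-\rho(r))+\sigma(\cdot)\big(B^a(r)-B^a(\rho(r))\big)
\end{align*}
is handled through the taming bound \eqref{eq4.6}: the drift contribution obeys $|b_{\triangle}|^{2}(r-\rho(r))\le\triangle^{1-2\alpha}\le 1$ since $\alpha\in(0,\tfrac12]$, while the $\sigma(B^a(r)-B^a(\rho(r)))$ contribution is a mean-zero one-step increment whose cross terms against $b_{\triangle}$ are estimated in expectation via the moments of Brownian increments together with \eqref{eq3.3} and H\"older's inequality. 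Taming this discrepancy, so that the super-linear drift cannot spoil the estimate, is the delicate heart of the argument; once it has been absorbed into the $|U^{a,\Xi}(r)|^p$-type terms, the remainder is routine.
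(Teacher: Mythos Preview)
Your proposal is correct and follows essentially the same route as the paper. The paper's proof performs exactly the decomposition you identify: it writes the drift prefactor as $\big(\bar{U}^{a,\Xi}(v)-D(\bar{U}^{a,\Xi}(v-\tau))\big)-\big(U^{a,\Xi}(v)-D(U^{a,\Xi}(v-\tau))\big)$ plus the matched piece $U^{a,\Xi}(v)-D(U^{a,\Xi}(v-\tau))$ (their $\tilde J^a_1$ and $\tilde J^a_2$), uses \eqref{eq4.7} on the matched part, and controls the one-step increment via \eqref{eq4.6} and the Brownian moment bound, with the BDG step and Gronwall closing the estimate just as you describe. The only item the paper includes that you omit is a localization by stopping times $\eta^{\Xi}_N=\min_a\{T\wedge\inf\{t:|\bar U^{a,\Xi}(t)|\ge N\}\}$ before Gronwall, followed by Fatou; this is purely to guarantee a~priori finiteness of the supremum moments and is routine.
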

\begin{proof}
Let~$ T\geq0$, for each~$a=1,\cdot\cdot\cdot,\Xi$~and positive integer~$N$, let us define the~$\mathcal{F}_{t}$-stopping time by
$$
\eta^{a,\Xi}_{N}=T\wedge\inf\{t\in[0,T]:~|\bar{U}^{a,\Xi}(t)|\geq N\},~~~~\eta^{\Xi}_{N}=\min_{1\leq a\leq \Xi}\eta^{a,\Xi}_{N},
$$
and~$\eta^{\Xi}_{N}\uparrow T$~as~$N\rightarrow\infty$. Due to~$(\ref{eq3.7})$ and Assumption \ref{a1}, we know that
\begin{align*}
 |\bar{U}^{a,\Xi}(t\wedge\eta^{\Xi}_{N})|^{p} \leq\lambda\big|\bar{U}^{a,\Xi}(t\wedge\eta^{\Xi}_{N}-\tau)\big|^{p}+\frac{1}{(1-\lambda)^{p-1}} \big|\bar{U}^{a,\Xi}(t\wedge\eta^{\Xi}_{N})-D\big(\bar{U}^{a,\Xi}(t\wedge\eta^{\Xi}_{N}-\tau)\big)\big|^{p},
\end{align*}
which implies
\begin{align}\label{eq4.8}
 \sup_{0\leq r\leq t}|\bar{U}^{a,\Xi}(r\wedge\eta^{\Xi}_{N})|^{p}
\leq&\lambda\sup_{-\tau\leq r\leq 0}|\bar{U}^{a,\Xi}(r)|^{p}+\lambda\sup_{0\leq r\leq t }|\bar{U}^{a,\Xi}(r\wedge\eta^{\Xi}_{N})|^{p}\nn\\
&+\frac{1}{(1-\lambda)^{p-1}}\sup_{0\leq r\leq t}\big|\bar{U}^{a,\Xi}(r\wedge\eta^{\Xi}_{N})-D\big(\bar{U}^{a,\Xi}(r\wedge\eta^{\Xi}_{N}-\tau)\big)\big|^{p}.
\end{align}
Taking expectation on both sides derives that
\begin{align}\label{eq4.9}
&\mathbb{E}\big[\sup_{0\leq r\leq t}|\bar{U}^{a,\Xi}(r\wedge\eta^{\Xi}_{N})|^{p}\big]\nn\\
&\leq\frac{\lambda}{1-\lambda}\mathbb{E}\big[\|\xi^{a}\|^{p}\big]+
\frac{1}{(1-\lambda)^{p}}\mathbb{E}\Big[\sup_{0\leq r\leq t}\big|\bar{U}^{a,\Xi}(r\wedge\eta^{\Xi}_{N})-D\big(\bar{U}^{a,\Xi}(r\wedge\eta^{\Xi}_{N}-\tau)\big)\big|^{p}\Big].
\end{align}
By applying the~It$\hat{\mathrm{o}}$~formula we have
\begin{align}\label{eq4.11}
\mathbb{E}\Big[\sup_{0\leq r\leq t}\big|\bar{U}^{a,\Xi}(r\wedge\eta^{\Xi}_{N})\!-\!D\big(\bar{U}^{a,\Xi}(r\wedge\eta^{\Xi}_{N}\!-\!\tau)\big)\big|^{p}\Big]
\leq\mathbb{E}\big[|\xi^{a}(0)\!-\!D(\xi^{a}(-\tau))|^{p}\big]\!+\!\sum_{l=1}^{3}\tilde{J}^{a}_{l},
\end{align}
where
\begin{align*}
\tilde{J}^{a}_{1} &=p\mathbb{E}\Big[\sup_{0\leq r\leq t} \int_{0}^{r\wedge\eta^{\Xi}_{N}}\big|\bar{U}^{a,\Xi}(v)-D\big(\bar{U}^{a,\Xi}(v-\tau)\big)\big|^{p-2}\nn\\
&~~~~~~~~~~\times\Big(\bar{U}^{a,\Xi}(v)-D\big(\bar{U}^{a,\Xi}(v-\tau)\big)-U^{a,\Xi}(v)+D\big(U^{a,\Xi} (v-\tau)\big)\Big)^{T}\\
&~~~~~~~~~~~~~~~~~~\times b_{\triangle}(U^{a,\Xi}(v),U^{a,\Xi}(v-\tau),\mathcal{L}_{v}^{U,\Xi})\mathrm{d}v\Big],\\
 \tilde{J}^{a}_{2} &=p\mathbb{E}\bigg[\sup_{0\leq r\leq t}\bigg\{\int_{0}^{r\wedge\eta^{\Xi}_{N}}
 \big|\bar{U}^{a,\Xi}(v)-D\big(\bar{U}^{a,\Xi}(v-\tau)\big)\big|^{p-2}\Big(U^{a,\Xi}(v)-D\big(U^{a,\Xi}(v-\tau)\big)\Big)^{T}\\
&~~~~~~~\times b_{\triangle}(U^{a,\Xi}(v),U^{a,\Xi}(v-\tau),\mathcal{L}_{v}^{U,\Xi})\mathrm{d}v \\
&~~~~~~~~~+\!\frac{ p-1 }{2}\!\int_{0}^{r\wedge\eta^{\Xi}_{N}}
\big|\bar{U}^{a,\Xi}(v)\!-\!D\big(\bar{U}^{a,\Xi}(v\!-\!\tau)\big)\!\big|^{p-2} \big|\sigma(U^{a,\Xi}(v),\!U^{a,\Xi}(v\!-\!\tau),\!\mathcal{L}_{v}^{U,\Xi})\big|^{2}\mathrm{d}v\bigg\}\!\bigg]\\
\end{align*}
and
\begin{align*}
\tilde{J}^{a}_{3} &
=p\mathbb{E}\bigg[\sup_{0\leq r\leq t}\int_{0}^{r\wedge\eta^{\Xi}_{N}}\big|\bar{U}^{a,\Xi}(v)-D\big(\bar{U}^{a,\Xi}(v-\tau)\big)\big|^{p-2}
\Big(\bar{U}^{a,\Xi}(v)-D\big(\bar{U}^{a,\Xi}(v-\tau)\big)\Big)^{T}\\
&~~~~~~~~~~\times \sigma(U^{a,\Xi}(v),U^{a,\Xi}(v-\tau),\mathcal{L}_{v}^{U,\Xi})\mathrm{d}B^{a}(v)\bigg].
\end{align*}
We begin to compute~$ \tilde{J}^{a}_{1} $. Due to~$(\ref{eq3.3})$~and~$(\ref{eq4.6})$, using H$\ddot{\hbox{o}}$lder's inequality and Young's inequality, we have
\begin{align}\label{eq4.12}
&\tilde{J}^{a}_{1} \leq p\mathbb{E}\bigg[\int_{0}^{t\wedge\eta^{\Xi}_{N}}\big|\bar{U}^{a,\Xi}(r)-D\big(\bar{U}^{a,\Xi}(r-\tau)\big)\big|^{p-2}\nn\\
&~~~~~~\times\Big|\bar{U}^{a,\Xi}(r)-D\big(\bar{U}^{a,\Xi}(r-\tau)\big)-U^{a,\Xi}(r)+D\big(U^{a,\Xi} (r-\tau)\big)\Big|\nn\\
&~~~~~~~~~~~~~~\times\big|b_{\triangle}(U^{a,\Xi}(r),U^{a,\Xi}(r-\tau),\mathcal{L}_{r}^{U,\Xi})\big|\mathrm{d}r\bigg]\nn\\
&\leq C\mathbb{E}\bigg[\int_{0}^{t\wedge\eta^{\Xi}_{N}}\big|\bar{U}^{a,\Xi}(r)-D\big(\bar{U}^{a,\Xi}(r-\tau)\big)\big|^{p-2}\nn\\
&~~~~\times\Big|\int^{r}_{\rho(r)}\!b_{\triangle}(U^{a,\Xi}(v),\!U^{a,\Xi}(v\!-\!\tau),\mathcal{L}_{v}^{U,\Xi})\mathrm{d}v
\!+\!\int^{r}_{\rho(r)}\sigma(U^{a,\Xi}(v),\!U^{a,\Xi}(v\!-\!\tau),\mathcal{L}_{v}^{U,\Xi})\mathrm{d}B^{a}(v)\Big|\nn\\
&~~~~~~~~~~~~~~\times\big|b_{\triangle}(U^{a,\Xi}(r),U^{a,\Xi}(r-\tau),\mathcal{L}_{r}^{U,\Xi})\big|\mathrm{d}r\bigg]\nn\\
&\leq C\mathbb{E}\bigg[\sup_{0\leq r\leq t}\big|\bar{U}^{a,\Xi}(r\wedge\eta^{\Xi}_{N})-D\big(\bar{U}^{a,\Xi}(r\wedge\eta^{\Xi}_{N}-\tau)\big)\big|^{p-2}\nn\\
&~~~~\times\!\int_{0}^{t\wedge\eta^{\Xi}_{N}}\!\Big|\int^{r}_{\rho(r)}\!b_{\triangle}(U^{a,\Xi}(v),\!U^{a,\Xi}(v\!-\!\tau),\!\mathcal{L}_{v}^{U,\Xi}\!)\mathrm{d}r
\!\!+\!\!\int^{r}_{\rho(r)}\!\sigma(U^{a,\Xi}(v),\!U^{a,\Xi}(v\!-\!\tau),\!\mathcal{L}_{v}^{U,\Xi})\!\mathrm{d}B^{a}(v)\Big|\nn\\
&~~~~~~~~~~~~~\times\big|b_{\triangle}(U^{a,\Xi}(r),U^{a,\Xi}(r-\tau),\mathcal{L}_{r}^{U,\Xi})\big|\mathrm{d}r\bigg]\nn\\
&\leq\frac{1}{4}\mathbb{E}\Big[\sup_{0\leq r\leq t}\big|\bar{U}^{a,\Xi}(r\wedge\eta^{\Xi}_{N})-D\big(\bar{U}^{a,\Xi}(r\wedge\eta^{\Xi}_{N}-\tau)\big)\big|^{p}\Big]\nn\\
&~~~~+C\mathbb{E}\bigg[\int_{0}^{t\wedge\eta^{\Xi}_{N}}\bigg(\big|\int^{r}_{\rho(r)}b_{\triangle}(U^{a,\Xi}(v),U^{a,\Xi}(v-\tau),\mathcal{L}_{v}^{U,\Xi})\mathrm{d}v\big|^{\frac{p}{2}}\nn\\
&~~~~~~+\!\big|\int^{r}_{\rho(r)}\!\sigma(U^{a,\Xi}(v),U^{a,\Xi}(v\!-\!\tau),\mathcal{L}_{v}^{U,\Xi})\mathrm{d}B^{a}(v)\big|^{\frac{p}{2}}\bigg)
\big|b_{\triangle}(U^{a,\Xi}(r),U^{a,\Xi}(r\!-\!\tau),\mathcal{L}_{r}^{U,\Xi})\big|^{\frac{p}{2}}\mathrm{d}r\!\bigg]\nn\\
&\leq\frac{1}{4}\mathbb{E}\Big[\sup_{0\leq r\leq t}\big|\bar{U}^{a,\Xi}(r\wedge\eta^{\Xi}_{N})-D\big(\bar{U}^{a,\Xi}(r\wedge\eta^{\Xi}_{N}-\tau)\big)\big|^{p}\Big]\nn\\
&~~~~+C\bigg(\triangle^{p(\frac{1}{2}-\alpha)}+\triangle^{-\frac{\alpha p}{2}}
\mathbb{E}\Big[\int_{0}^{t}\big|\int^{r\wedge\eta^{\Xi}_{N}}_{\rho(r\wedge\eta^{\Xi}_{N})}\sigma(U^{a,\Xi}(v),U^{a,\Xi}(v-\tau),\mathcal{L}_{v}^{U,\Xi})\mathrm{d}B^{a}(v)\big|^{\frac{p}{2}}
\mathrm{d}r\Big]\bigg)\nn\\
&\leq\frac{1}{4}\mathbb{E}\Big[\sup_{0\leq r\leq t}|\bar{U}^{a,\Xi}(r\wedge\eta^{\Xi}_{N})-D\big(\bar{U}^{a,\Xi}(r\wedge\eta^{\Xi}_{N}-\tau)\big)|^{p}\Big]\nn\\
&~~~~+C\bigg(\triangle^{p(\frac{1}{2}-\alpha)}+\triangle^{-\frac{\alpha p}{2}}
\int_{0}^{t}\mathbb{E}\Big[\Big(\int^{r\wedge\eta^{\Xi}_{N}}_{\rho(r\wedge\eta^{\Xi}_{N})}\big|\sigma(U^{a,\Xi}(v),U^{a,\Xi}(v-\tau),\mathcal{L}_{v}^{U,\Xi})\big|^{2}\mathrm{d}v\Big)^{\frac{p}{4}}
\Big]\mathrm{d}r\bigg)\nn\\
&\leq\frac{1}{4}\mathbb{E}\!\Big[\sup_{0\leq r\leq t}\big|\bar{U}^{a,\Xi}(r\wedge\eta^{\Xi}_{N})\!-\!D\big(\bar{U}^{a,\Xi}(r\wedge\eta^{\Xi}_{N}\!-\!\tau)\big)\big|^{p}\Big]\nn\\
&~~~~+\!C\bigg(\!\triangle^{p(\frac{1}{2}-\alpha)}
\!+\!\triangle^{\frac{p}{2}(\frac{1}{2}-\alpha)}
\!\int_{0}^{t}\!\mathbb{E}\Big[1\!+\!|U^{a,\Xi}(r\wedge\eta^{\Xi}_{N})|^{\frac{p}{2}}\!+\!|U^{a,\Xi}(r\wedge\eta^{\Xi}_{N}\!-\!\tau)|^{\frac{p}{2}}\!+\!W^{\frac{p}{2}}_{2}(\mathcal{L}_{r\wedge\eta^{\Xi}_{N}}^{U,\Xi})
\!\Big]\mathrm{d}r\!\bigg)\nn\\
&\leq\frac{1}{4}\mathbb{E}\Big[\sup_{0\leq r\leq t}\big|\bar{U}^{a,\Xi}(r\wedge\eta^{\Xi}_{N})-D\big(\bar{U}^{a,\Xi}(r\wedge\eta^{\Xi}_{N}-\tau)\big)\big|^{p}\Big]\nn\\
&~~~~+C\bigg(\triangle^{p(\frac{1}{2}-\alpha)}+\int_{0}^{t}\mathbb{E}\Big[1+|U^{a,\Xi}(r\wedge\eta^{\Xi}_{N})|^{p}+|U^{a,\Xi}(r\wedge\eta^{\Xi}_{N}-\tau)|^{p}+W^{p}_{p}(\mathcal{L}_{r\wedge\eta^{\Xi}_{N}}^{U,\Xi})
\Big]\mathrm{d}r\bigg)\nn\\
&\leq\frac{1}{4}\mathbb{E}\Big[\sup_{0\leq r\leq t}\big|\bar{U}^{a,\Xi}(r\wedge\eta^{\Xi}_{N})-D\big(\bar{U}^{a,\Xi}(r\wedge\eta^{\Xi}_{N}-\tau)\big)\big|^{p}\Big]\nn\\
&~~~~+C\Big(\triangle^{p(\frac{1}{2}-\alpha)}
+\int_{0}^{t}\sup_{1\leq a\leq \Xi}\mathbb{E}\big[\sup_{0\leq v\leq r}|\bar{U}^{a,\Xi}(v\wedge\eta^{\Xi}_{N})|^{p}\big]\mathrm{d}r+1\Big)\nn\\
&\leq\frac{1}{4}\mathbb{E}\Big[\sup_{0\leq r\leq t}\big|\bar{U}^{a,\Xi}(r\wedge\eta^{\Xi}_{N})-D\big(\bar{U}^{a,\Xi}(r\wedge\eta^{\Xi}_{N}-\tau)\big)\big|^{p}\Big]\nn\\
&~~~~+C\Big(\int_{0}^{t}\sup_{1\leq a\leq \Xi}\mathbb{E}\big[\sup_{0\leq v\leq r}|\bar{U}^{a,\Xi}(v\wedge\eta^{\Xi}_{N})|^{p}\big]\mathrm{d}r+1\Big).
\end{align}
Using~$(\ref{eq3.3})$, $(\ref{eq4.7})$~and~Young's~inequality, we can deduce that
\begin{align}\label{eq4.13}
\tilde{J}^{a}_{2}&\leq C\mathbb{E}\bigg[\int_{0}^{t\wedge\eta^{\Xi}_{N}}\big|\bar{U}^{a,\Xi}(r)-D\big(\bar{U}^{a,\Xi}(r-\tau)\big)\big|^{p-2}\nn\\
&~~~~\times \Big(1+|U^{a,\Xi}(r)|^{2}+|U^{a,\Xi}(r-\tau)|^{2}+W^{2}_{2}(\mathcal{L}_{r}^{U,\Xi})\Big)\mathrm{d}r\bigg]\nn\\
&\leq C\mathbb{E}\Big[\int_{0}^{t\wedge\eta^{\Xi}_{N}}\big(|\bar{U}^{a,\Xi}(r)|^{p}+|\bar{U}^{a,\Xi}(r-\tau)|^{p}\big)\mathrm{d}r\Big]\nn\\
&~~~~+C\mathbb{E}\bigg[\int_{0}^{t\wedge\eta^{\Xi}_{N}}\Big(1+|U^{a,\Xi}(r)|^{p}+|U^{a,\Xi}(r-\tau)|^{p}
+W^{p}_{2}(\mathcal{L}_{r}^{U,\Xi})\Big) \mathrm{d}r\bigg]
\nn\\
&\leq C\Big(\int_{0}^{t}\sup_{1\leq a\leq \Xi}\mathbb{E}\big[\sup_{0\leq v\leq r}|\bar{U}^{a,\Xi}(v\wedge\eta^{\Xi}_{N})|^{p}\big]\mathrm{d}r+1\Big).
\end{align}
Applying the~BDG~inequality, Young's~inequality and~$\mathrm{H\ddot{o}lder}$'s~inequality we have
\begin{align}\label{eq4.14}
\tilde{J}^{a}_{3}
&\leq C\mathbb{E}\bigg[\Big(\int_{0}^{t\wedge\eta^{\Xi}_{N}}\big|\bar{U}^{a,\Xi}(r)-D\big(\bar{U}^{a,\Xi}(r-\tau)\big)\big|^{2p-2}
\big|\sigma(U^{a,\Xi}(r),U^{a,\Xi}(r-\tau),\mathcal{L}_{r}^{U,\Xi})\big|^{2}\mathrm{d}r\Big)^{\frac{1}{2}}\bigg]\nn\\
&\leq C\mathbb{E}\bigg[\sup_{0\leq r\leq t}\big|\bar{U}^{a,\Xi}(r\wedge\eta^{\Xi}_{N})-D\big(\bar{U}^{a,\Xi}(r\wedge\eta^{\Xi}_{N}-\tau)\big)\big|^{p-1}\nn\\
&~~~~~~~~\times\Big(\int_{0}^{t\wedge\eta^{\Xi}_{N}}
\big|\sigma(U^{a,\Xi}(r),U^{a,\Xi}(r-\tau),\mathcal{L}_{r}^{U,\Xi})\big|^{2}\mathrm{d}r\Big)^{\frac{1}{2}}\bigg]\nn\\
&\leq\frac{1}{4}\mathbb{E}\Big[\sup_{0\leq r\leq t}\big|\bar{U}^{a,\Xi}(r\wedge\eta^{\Xi}_{N})-D\big(\bar{U}^{a,\Xi}(r\wedge\eta^{\Xi}_{N}-\tau)\big)\big|^{p}\Big]\nn\\
&~~~~+C\mathbb{E}\Big[\int_{0}^{t\wedge\eta^{\Xi}_{N}}\big|\sigma(U^{a,\Xi}(r),U^{a,\Xi}(r-\tau),\mathcal{L}_{r}^{U,\Xi})\big|^{p}\mathrm{d}r\Big]\nn\\
&\leq\frac{1}{4}\mathbb{E}\Big[\sup_{0\leq r\leq t}\big|\bar{U}^{a,\Xi}(r\wedge\eta^{\Xi}_{N})-D\big(\bar{U}^{a,M}(r\wedge\eta^{\Xi}_{N}-\tau)\big)\big|^{p}\Big]\nn\\
&~~~~+C\mathbb{E}\bigg[\int_{0}^{t\wedge\eta^{\Xi}_{N}}
\Big(1+|U^{a,\Xi}(r)|^{p}+|U^{a,\Xi}(r-\tau)|^{p}+W^{p}_{p}(\mathcal{L}_{r}^{U,\Xi})\Big)\mathrm{d}r\bigg]\nn\\
&\leq\frac{1}{4}\mathbb{E}\Big[\sup_{0\leq r\leq t}\big|\bar{U}^{a,\Xi}(r\wedge\eta^{\Xi}_{N})-D\big(\bar{U}^{a,\Xi}(r\wedge\eta^{\Xi}_{N}-\tau)\big)\big|^{p}\Big]\nn\\
&~~~~+C\Big(\int_{0}^{t}\sup_{1\leq a\leq \Xi}\mathbb{E}\big[\sup_{0\leq v\leq r}|\bar{U}^{a,\Xi}(v\wedge\eta^{\Xi}_{N})|^{p}\big]\mathrm{d}r+1\Big).
\end{align}
Inserting~$(\ref{eq4.12})$-$(\ref{eq4.14})$~into~$(\ref{eq4.11})$, we derive that
\begin{align*}
&\mathbb{E}\Big[\sup_{0\leq r\leq t}\big|\bar{U}^{a,\Xi}(r\wedge\eta^{\Xi}_{N})-D\big(\bar{U}^{a,\Xi}(r\wedge\eta^{\Xi}_{N}-\tau)\big)\big|^{p}\Big]\nn\\
&\leq C\int_{0}^{t}\sup_{1\leq a\leq \Xi}\mathbb{E}\big[\sup_{0\leq v\leq r}|\bar{U}^{a,\Xi}(v\wedge\eta^{\Xi}_{N})|^{p}\big]\mathrm{d}r+C.
\end{align*}
Combining the above inequality with~$(\ref{eq4.9})$~and using the~Gronwall~inequality arrive at
\begin{align*}
\sup_{1\leq a\leq M}\mathbb{E}\Big[\sup_{0\leq r\leq t}\big|\bar{U}^{a,\Xi}(r\wedge\eta^{\Xi}_{N})\big|^{p}\Big]\leq C.
\end{align*}
Therefore, letting $N\rightarrow \infty$, using Fatou's lemma one can see that
\begin{align*}
\sup_{1\leq a\leq \Xi}\mathbb{E}\big[\sup_{0\leq t\leq T}|\bar{U}^{a,\Xi}(t)|^{p}\big]\leq C.
\end{align*}
\end{proof}

\subsection{Convergence Rate}\la{s6}
We investigate in this section the convergence rate between the numerical solution and the exact solution.
\begin{lemma}\label{le4.3}
Under  Assumption $\ref{a1}$,
for any $T>0$,
\begin{align}\label{eq4.15}
\sup_{1\leq a\leq \Xi}\sup_{0\leq n\leq \lfloor T/\triangle\rfloor}\mathbb{E}\big[\sup_{t_{n}\leq t<t_{n+1}}|\bar{U}^{a,\Xi}(t)-U^{a,\Xi}_{n}|^{p}\big]\leq C\triangle^{\frac{p}{2}},~~t\in[0,T]. \end{align}

\end{lemma}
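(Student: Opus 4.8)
The plan is to work block by block on each interval $[t_n,t_{n+1})$ and to exploit the one-step representation derived just after \eqref{eq4.4}. For $t\in[t_n,t_{n+1})$ one has $U^{a,\Xi}(t)=U^{a,\Xi}_n$ and $U^{a,\Xi}(t-\tau)=U^{a,\Xi}_{n-n_0}$, so that representation yields
\begin{align*}
\bar{U}^{a,\Xi}(t)-U^{a,\Xi}_n
&=D\big(\bar{U}^{a,\Xi}(t-\tau)\big)-D\big(U^{a,\Xi}(t-\tau)\big)
+\int_{t_n}^{t}b_{\triangle}(U^{a,\Xi}_n,U^{a,\Xi}_{n-n_0},\mathcal{L}^{U_n,\Xi})\mathrm{d}r\\
&\quad+\int_{t_n}^{t}\sigma(U^{a,\Xi}_n,U^{a,\Xi}_{n-n_0},\mathcal{L}^{U_n,\Xi})\mathrm{d}B^a(r).
\end{align*}
Writing $\Psi_n:=\sup_{1\le a\le\Xi}\mathbb{E}\big[\sup_{t_n\le t<t_{n+1}}|\bar{U}^{a,\Xi}(t)-U^{a,\Xi}(t)|^p\big]$, so that the claim reads $\sup_{0\le n\le\lfloor T/\triangle\rfloor}\Psi_n\le C\triangle^{p/2}$, I would apply $|x+y+z|^p\le 3^{p-1}(|x|^p+|y|^p+|z|^p)$ and estimate the three pieces separately.

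The two pieces other than the neutral term are routine. For the drift, the taming bound \eqref{eq4.6} gives $|\int_{t_n}^t b_{\triangle}\,\mathrm{d}r|\le\triangle^{-\alpha}(t-t_n)\le\triangle^{1-\alpha}$, and since $\alpha\le\tfrac12$ and $\triangle<1$ this contributes at most $\triangle^{p(1-\alpha)}\le\triangle^{p/2}$ deterministically. For the diffusion I would use the BDG inequality together with the fact that $\sigma$ is frozen on $[t_n,t_{n+1})$: BDG gives $\mathbb{E}\big[\sup_{t_n\le t<t_{n+1}}|\int_{t_n}^t\sigma\,\mathrm{d}B^a|^p\big]\le C\triangle^{p/2}\,\mathbb{E}\big[|\sigma(U^{a,\Xi}_n,U^{a,\Xi}_{n-n_0},\mathcal{L}^{U_n,\Xi})|^p\big]$, and the linear-growth bound \eqref{eq3.3} together with the uniform moment estimate of Lemma \ref{le4.2} (recall $U^{a,\Xi}_n=\bar{U}^{a,\Xi}(t_n)$ and $W_2\le W_p$) bounds the expectation by a $\triangle$-independent constant. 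Hence both pieces are $\le C\triangle^{p/2}$ uniformly in $a$ and $n$.

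The delay-sensitive piece is the neutral term. By the Lipschitz bound on $D$ in Assumption \ref{a1}, and because $\tau=n_0\triangle$ shifts the window $[t_n,t_{n+1})$ exactly onto block $n-n_0$,
\[
\sup_{t_n\le t<t_{n+1}}\big|D(\bar{U}^{a,\Xi}(t-\tau))-D(U^{a,\Xi}(t-\tau))\big|^p
\le\lambda^p\sup_{t_{n-n_0}\le s<t_{n+1-n_0}}\big|\bar{U}^{a,\Xi}(s)-U^{a,\Xi}(s)\big|^p.
\]
Taking expectations and the supremum over $a$ yields the recursion $\Psi_n\le 3^{p-1}\lambda^p\,\Psi_{n-n_0}+C\triangle^{p/2}$ for $n\ge0$, while the base blocks $n\in\{-n_0,\dots,-1\}$ satisfy $\Psi_n=\sup_a\mathbb{E}\big[\sup_{t_n\le t<t_{n+1}}|\xi^a(t)-\xi^a(t_n)|^p\big]\le K_0\triangle^{p/2}$ directly from the initial-data Hölder condition \eqref{eq3.2}.

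I expect this recursion to be the main point, since the factor $3^{p-1}\lambda^p$ need not be less than $1$, and a naïve accumulation over the $\lfloor T/\triangle\rfloor$ steps would destroy the rate. The resolution is that the recursion steps backward by a whole delay block $n_0=\tau/\triangle$ at a time, so it unrolls at most $\lceil T/\tau\rceil+1$ times before reaching a base block, a bound independent of $\triangle$. Unrolling then gives $\Psi_n\le(3^{p-1}\lambda^p)^{\lceil T/\tau\rceil+1}K_0\triangle^{p/2}+C\triangle^{p/2}\sum_{k=0}^{\lceil T/\tau\rceil}(3^{p-1}\lambda^p)^k$, a finite sum, hence $\Psi_n\le C\triangle^{p/2}$ with $C$ depending on $p,\lambda,T,\tau$ but not on $\triangle$ or $\Xi$, which is the assertion \eqref{eq4.15}.
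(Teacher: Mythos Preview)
Your argument is correct, but it takes a different route from the paper. The paper first bounds the ``neutral-removed'' increment $\bar{U}^{a,\Xi}(t)-D(\bar{U}^{a,\Xi}(t-\tau))-U^{a,\Xi}_n+D(U^{a,\Xi}_{n-n_0})$ by $C\triangle^{p/2}$ (same drift and diffusion estimates as yours), and then invokes the sharp splitting inequality \eqref{eq3.7} of Lemma~\ref{l3.4}. That inequality is tuned so that the coefficient in front of the delayed term is exactly $\lambda<1$, not $3^{p-1}\lambda^p$. Taking the supremum over all $0\le n\le\lfloor T/\triangle\rfloor$ on both sides then allows the paper to absorb the delayed supremum into the left-hand side in a single algebraic step, since the contraction factor is strictly below $1$; the initial blocks $-n_0\le n<0$ contribute the $K_0\triangle^{p/2}$ term via \eqref{eq3.2}. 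By contrast, you use the crude bound $|x+y+z|^p\le 3^{p-1}(|x|^p+|y|^p+|z|^p)$, accept that $3^{p-1}\lambda^p$ may exceed $1$, and instead exploit that the recursion $\Psi_n\le 3^{p-1}\lambda^p\Psi_{n-n_0}+C\triangle^{p/2}$ jumps back a full delay block and hence terminates after at most $\lceil T/\tau\rceil+1$ iterations, a number independent of~$\triangle$. Your approach is more elementary and avoids the calibrated inequality \eqref{eq3.7}, at the cost of a constant that grows like $(3^{p-1}\lambda^p)^{T/\tau}$; the paper's approach is cleaner and gives a constant independent of $T/\tau$.
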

\begin{proof}
Let~$\tilde{N}=\lfloor T/\triangle\rfloor$. For each~$~a=1,\cdot\cdot\cdot,\Xi$ and $t\in[0,T]$, there exists~$n,~0\leq n\leq\tilde{N}$~such that~$t\in[t_{n},t_{n+1})$. Then one observes
\begin{align}\label{eq4.16}
&\mathbb{E}\Big[\sup_{t_{n}\leq t<t_{n+1}}\big|\bar{U}^{a,\Xi}(t)-D\big(\bar{U}^{a,\Xi}(t-\tau)\big)-U^{a,\Xi}_{n}+D(U^{a,\Xi}_{n-n_{0}})\big|^{p}\Big]\nn\\
&\leq C\mathbb{E}\bigg[\sup_{t_{n}\leq t<t_{n+1}}\Big(\big|b_{\triangle}(U^{a,\Xi}_{n},U^{a,\Xi}_{n-n_{0}},\mathcal{L}^{U_{n},\Xi})(t-t_{n})\big|^{p}\nn\\
&~~~~~~~~~~~~~~~~+\big|\sigma(U^{a,\Xi}_{n},U^{a,\Xi}_{n-n_{0}},\mathcal{L}^{U_{n},\Xi})(B^{a}(t)-B^{a}(t_{n}))\big|^{p}\Big)
\bigg]\nn\\
&\leq C\bigg(\triangle^{(1-\alpha)p}+\mathbb{E}\Big[\sup_{t_{n}\leq t<t_{n+1}}\big|\sigma(U^{a,\Xi}_{n},U^{a,\Xi}_{n-n_{0}},\mathcal{L}^{U_{n},\Xi})(B^{a}(t)-B^{a}(t_{n}))\big|^{p}\Big]\bigg).
\end{align}
By~Doob's martingale inequality and Lemma~$\ref{le4.2}$, we derive that
\begin{align}\label{eq4.17}
&\mathbb{E}\Big[\sup_{t_{n}\leq t<t_{n+1}}\big|\sigma(U^{a,\Xi}_{n},U^{a,\Xi}_{n-n_{0}},\mathcal{L}^{U_{n},\Xi})(B^{a}(t)-B^{a}(t_{n}))\big|^{p}\Big]\nn\\
&\leq C\triangle^{\frac{p}{2}}\mathbb{E}\Big[1+|U^{a,\Xi}_{n}|^{p}+|U^{a,\Xi}_{n-n_{0}}|^{p}+W^{p}_{p}(\mathcal{L}^{U_n,\Xi})\Big]\nn\\
&\leq C\triangle^{\frac{p}{2}}.
\end{align}
Therefore, substituting~$(\ref{eq4.17})$~into~$(\ref{eq4.16})$~yields that
\begin{align}\label{eq4.18}
\sup_{0\leq n\leq \tilde{N}}\mathbb{E}\Big[\sup_{t_{n}\leq t<t_{n+1}}\big|\bar{U}^{a,\Xi}(t)-D\big(\bar{U}^{a,\Xi}(t-\tau)\big)-U^{a,\Xi}_{n}+D(U^{a,\Xi}_{n-n_{0}})\big|^{p}\Big]\leq C\triangle^{\frac{p}{2}}.
\end{align}
Next, by Lemma~$\ref{l3.4}$, we have
\begin{align*}
 |\bar{U}^{a,\Xi}(t)-U^{a,\Xi}_{n}|^{p}
&\leq \frac{1}{(1-\lambda)^{p-1}} \big|\bar{U}^{a,\Xi}(t)-D\big(\bar{U}^{a,\Xi}(t-\tau)\big)-U^{a,\Xi}_{n}+D(U^{a,\Xi}_{n-n_{0}})\big|^{p}\nn\\
&~~~~+\frac{1}{ \lambda^{p-1}} \big|D\big(\bar{U}^{a,\Xi}(t-\tau)\big)-D(U^{a,\Xi}_{n-n_{0}})\big|^{p},
\end{align*}
which together with Assumption \ref{a1} implies
\begin{align*}
&\mathbb{E}\Big[\sup_{t_{n}\leq t<t_{n+1}}\big|\bar{U}^{a,\Xi}(t)-U^{a,\Xi}_{n}\big|^{p}\Big]\nn\\
&\leq\frac{1}{(1-\lambda)^{p-1}}\mathbb{E}\Big[\sup_{t_{n}\leq t<t_{n+1}}\big|\bar{U}^{a,\Xi}(t)-D\big(\bar{U}^{a,\Xi}(t-\tau)\big)-U^{a,\Xi}_{n}+D(U^{a,\Xi}_{n-n_{0}})\big|^{p}\Big]\nn\\
&~~~~+\lambda\mathbb{E}\big[\sup_{t_{n}\leq t<t_{n+1}}|\bar{U}^{a,\Xi}(t-\tau)-U^{a,\Xi}_{n-n_{0}}|^{p}\big]\nn\\
&\leq\frac{1}{(1-\lambda)^{p-1}}\mathbb{E}\Big[\sup_{t_{n}\leq t<t_{n+1}}\big|\bar{U}^{a,\Xi}(t)-D\big(\bar{U}^{a,\Xi}(t-\tau)\big)-U^{a,\Xi}_{n}+D(U^{a,\Xi}_{n-n_{0}})\big|^{p}\Big]\nn\\
&~~~~+\lambda\mathbb{E}\big[\sup_{t_{n-n_{0}}\leq t<t_{n-n_{0}+1}}|\bar{U}^{a,\Xi}(t)-U^{a,\Xi}_{n-n_{0}}|^{p}\big].
\end{align*}
According to \eqref{eq4.18} we calculate
\begin{align*}
&\sup_{0 \leq n\leq \tilde{N}}\mathbb{E}\Big[\sup_{t_{n}\leq t<t_{n+1}}|\bar{U}^{a,\Xi}(t)-U^{a,\Xi}_{n}|^{p}\Big]\nn\\
&\leq\frac{1}{(1-\lambda)^{p-1}}\sup_{0 \leq n\leq  \tilde{N}}\mathbb{E}\Big[\sup_{t_{n}\leq t<t_{n+1}}\big|\bar{U}^{a,\Xi}(t)-D\big(\bar{U}^{a,\Xi}(t-\tau)\big)-U^{a,\Xi}_{n}+D(U^{a,\Xi}_{n-n_{0}})\big|^{p}\Big]\nn\\
&~~~~+\lambda\sup_{- n_0\leq n\leq  \tilde{N} -n_0 }\mathbb{E}\Big[\sup_{t_{n}\leq t<t_{n+1}}\big|\bar{U}^{a,\Xi}(t)-U^{a,\Xi}_{n}\big|^{p}\Big]\nn\\
&\leq\frac{C}{(1-\lambda)^{p-1}} \triangle^{\frac{p}{2}}+ \lambda\sup_{- n_0\leq n< 0 }\mathbb{E}\Big[\sup_{t_{n}\leq t<t_{n+1}}\big|\bar{U}^{a,\Xi}(t)-U^{a,\Xi}_{n}\big|^{p}\Big]\nn\\
&~~~~+\lambda\sup_{0\leq n\leq\tilde{N}}\mathbb{E}\Big[\sup_{t_{n}\leq t<t_{n+1}}\big|\bar{U}^{a,\Xi}(t)-U^{a,\Xi}_{n}\big|^{p}\Big]\nn\\
&\leq\frac{C}{(1-\lambda)^{p }} \triangle^{\frac{p}{2}}
+\frac{\lambda}{ 1-\lambda}\sup_{- n_0\leq n< 0}\mathbb{E}\Big[\sup_{t_{n}\leq t<t_{n+1}}\big|\bar{U}^{a,\Xi}(t)-U^{a,\Xi}_{n}\big|^{p}\Big].
\end{align*}
This yields that
\begin{align}\label{eq4.19}
&\sup_{0 \leq n\leq \tilde{N}}\mathbb{E}\Big[\sup_{t_{n}\leq t<t_{n+1}}|\bar{U}^{a,\Xi}(t)-U^{a,\Xi}_{n}|^{p}\Big]\nn\\
&\leq\frac{C}{(1-\lambda)^{p }} \triangle^{\frac{p}{2}}
+\frac{\lambda}{ 1-\lambda}\sup_{- n_0\leq n< 0}\mathbb{E}\Big[\sup_{t_{n}\leq t<t_{n+1}}\big|\bar{U}^{a,\Xi}(t)-U^{a,\Xi}_{n}\big|^{p}\Big].
\end{align}
From \eqref{eq3.2} we can infer that
\begin{align}\label{eq4.20}
\sup_{- n_0\leq n<0}\!\mathbb{E}\Big[\sup_{t_{n}\leq t<t_{n+1}}\big|\!\bar{U}^{a,\Xi}(t)\!-\!U^{a,\Xi}_{n}\!\big|^{p}\!\Big]
\leq\sup_{- n_0\leq n< 0 }\!\mathbb{E}\!\Big[\sup_{t_{n}\leq t<t_{n+1}}\big|\xi^{a}(t)\!-\!\xi^{a}(t_{n})\big|^{p}\!\Big]\leq K_{0}\triangle^{\frac{p}{2}}.
\end{align}
Inserting \eqref{eq4.20} into (\ref{eq4.19}) yields
\begin{align*}
 \sup_{0 \leq n\leq\tilde{N}}\mathbb{E}\Big[\sup_{t_{n}\leq t<t_{n+1}}\big|\bar{U}^{a,\Xi}(t)-U^{a,\Xi}_{n}\big|^{p}\Big]\leq\Big(\frac{C}{(1-\lambda)^{p }} +\frac{\lambda K_0}{ 1-\lambda }\Big)\triangle^{\frac{p}{2}}.
\end{align*}
The required result $(\ref{eq4.15})$~follows.
\end{proof}

\begin{lemma}\label{le4.4}
Under Assumptions $\ref{a1}$~and~$\ref{a2}$ with~$p\geq4(c+1)$,  for any~$q\in[2,\frac{p}{2(c+1)}]$,
$$
\sup_{1\leq a\leq \Xi}\mathbb{E}\big[\sup_{0\leq t\leq T}|S^{a,\Xi}(t)-\bar{U}^{a,\Xi}(t)|^{q}\big]\leq C\triangle^{\alpha q},~~~T\geq0. $$
\end{lemma}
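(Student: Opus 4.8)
The plan is to follow the neutral-difference strategy already used for Lemma~\ref{le3.6}, but now confronting the tamed drift and the super-linear growth. I would write the error $e^{a}(t):=S^{a,\Xi}(t)-\bar U^{a,\Xi}(t)$ together with the neutral difference
$$
\Lambda^{a}(t):=\big(S^{a,\Xi}(t)-D(S^{a,\Xi}(t-\tau))\big)-\big(\bar U^{a,\Xi}(t)-D(\bar U^{a,\Xi}(t-\tau))\big).
$$
Since $S^{a,\Xi}$ and $\bar U^{a,\Xi}$ share the initial segment $\xi^{a}$, the delayed error vanishes on $[-\tau,0]$, so exactly as in $(\ref{eq3.19})$ and $(\ref{eq3.34})$ the inequality $(\ref{eq3.7})$ yields
$$
\sup_{0\leq r\leq t}|e^{a}(r)|^{q}\leq\frac{1}{(1-\lambda)^{q}}\sup_{0\leq r\leq t}|\Lambda^{a}(r)|^{q},
$$
reducing the problem to estimating $\mathbb{E}\big[\sup_{0\le r\le t}|\Lambda^{a}(r)|^{q}\big]$.

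First I would apply the It\^o formula to $|\Lambda^{a}(t)|^{q}$. The decisive step is to split the drift increment $b(S^{a,\Xi},S^{a,\Xi}(\cdot-\tau),\mathcal{L}_{t}^{S,\Xi})-b_{\triangle}(U^{a,\Xi},U^{a,\Xi}(\cdot-\tau),\mathcal{L}_{t}^{U,\Xi})$ into three pieces: (i) $b(S^{a,\Xi},S^{a,\Xi}(\cdot-\tau),\mathcal{L}_{t}^{S,\Xi})-b(\bar U^{a,\Xi},\bar U^{a,\Xi}(\cdot-\tau),\mathcal{L}_{t}^{U,\Xi})$, (ii) $b(\bar U^{a,\Xi},\cdots)-b(U^{a,\Xi},\cdots)$, and (iii) the taming defect $b(U^{a,\Xi},\cdots)-b_{\triangle}(U^{a,\Xi},\cdots)$. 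Paired with $\Lambda^{a}(t)$, piece (i) is controlled by the one-sided condition in Assumption~\ref{a1}, producing $K_{2}\big(|e^{a}(t)|^{2}+|e^{a}(t-\tau)|^{2}+\mathbb{W}_{2}^{2}(\mathcal{L}_{t}^{S,\Xi},\mathcal{L}_{t}^{U,\Xi})\big)$, where the empirical Wasserstein term is bounded by $\frac{1}{\Xi}\sum_{j=1}^{\Xi}|e^{j}(t)|^{2}$; after Young's inequality, a power-mean bound, and taking $\sup_{1\le a\le\Xi}\mathbb{E}$ this is dominated by $\sup_{a}\mathbb{E}\big[\sup|e^{a}|^{q}\big]$, hence is Gronwall-admissible. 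The diffusion contribution, both the It\^o correction $\tfrac{q(q-1)}{2}|\Lambda^{a}|^{q-2}|\sigma(S^{a,\Xi},\cdots)-\sigma(U^{a,\Xi},\cdots)|^{2}$ and the BDG bound of the stochastic integral, is handled through the Lipschitz property of $\sigma$ in Assumption~\ref{a1}, writing $|S^{a,\Xi}-U^{a,\Xi}|\le|e^{a}|+|\bar U^{a,\Xi}-U^{a,\Xi}|$ and invoking Lemma~\ref{le4.3}, with Young's inequality absorbing the resulting $\tfrac12\mathbb{E}[\sup|\Lambda^{a}|^{q}]$.

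For the two genuinely new pieces I would argue as follows. Piece (ii) is estimated by the polynomial-growth local Lipschitz bound of Assumption~\ref{a2}: the weight $1+|\bar U^{a,\Xi}|^{c}+|U^{a,\Xi}|^{c}+\cdots$ is separated from the increment by H\"older's inequality, after which Lemma~\ref{le4.2} controls the weight and Lemma~\ref{le4.3} supplies order $\triangle^{1/2}$ for the increment; this is where $p\ge4(c+1)$ and $q\le p/(2(c+1))$ enter, keeping the polynomial weights $L^{p}$-integrable and closing the H\"older split. Piece (iii) is the dominant contribution: from $(\ref{eq4.1})$ one has $|b(U^{a,\Xi},\cdots)-b_{\triangle}(U^{a,\Xi},\cdots)|=\triangle^{\alpha}|b(U^{a,\Xi},\cdots)|^{2}\big/\big(1+\triangle^{\alpha}|b(U^{a,\Xi},\cdots)|\big)\le\triangle^{\alpha}|b(U^{a,\Xi},\cdots)|^{2}$, and after Young's inequality the growth bound $(\ref{eq3.6})$ together with Lemma~\ref{le4.2} turns $\triangle^{\alpha q}|b(U^{a,\Xi},\cdots)|^{2q}$ into a term of order $\triangle^{\alpha q}$; the integrability of $|b|^{2q}$ requires precisely $2q(c+1)\le p$, i.e.\ $q\le p/(2(c+1))$. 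Collecting the drift and diffusion pieces, substituting into the It\^o expansion and applying the Gronwall inequality to $\phi(t):=\sup_{1\le a\le\Xi}\mathbb{E}\big[\sup_{0\le r\le t}|e^{a}(r)|^{q}\big]$ then gives $\phi(T)\le C\triangle^{\alpha q}$, since $\alpha\le\tfrac12$ makes the $\triangle^{\alpha q}$ term dominate the $\triangle^{q/2}$ one.

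I expect the main obstacle to be the bookkeeping of the super-linear term under the $L^{q}$ norm: piece (ii) and especially the taming defect (iii) must remain integrable while the neutral structure forces us to work with the full $q$th power, and it is exactly the balance $q\le p/(2(c+1))$ with $p\ge4(c+1)$, combined with Lemmas~\ref{le4.2} and~\ref{le4.3}, that makes every H\"older split close at the rate $\triangle^{\alpha q}$. A secondary point requiring care is the coupling of all $\Xi$ particles through the empirical Wasserstein distance and the reduction of the resulting average back to $\sup_{a}\mathbb{E}$ before the Gronwall step.
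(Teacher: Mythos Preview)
Your proposal is correct and mirrors the paper's proof: the same neutral-difference reduction to $|\Lambda^{a}|^{q}$ (the paper's $(\ref{eq4.21})$), the same three-piece drift splitting (the paper's $Q^{a,\Xi}_{1}$--$Q^{a,\Xi}_{3}$) handled respectively by the one-sided condition, Assumption~\ref{a2} with Lemmas~\ref{le4.2}--\ref{le4.3}, and the taming-defect bound $|b-b_{\triangle}|\le\triangle^{\alpha}|b|^{2}$ together with $(\ref{eq3.6})$, followed by BDG for the diffusion and Gronwall over $\sup_{a}$. The only slip is that $\mathbb{W}_{2}^{2}(\mathcal{L}^{S,\Xi}_{t},\mathcal{L}^{U,\Xi}_{t})$ is bounded by $\frac{1}{\Xi}\sum_{j}|S^{j,\Xi}(t)-U^{j,\Xi}(t)|^{2}$ (with $U^{j,\Xi}$, not $\bar U^{j,\Xi}$, since $\mathcal{L}^{U,\Xi}$ is built from the step process), but this is immediately fixed by the split $|S^{j,\Xi}-U^{j,\Xi}|\le|e^{j}|+|\bar U^{j,\Xi}-U^{j,\Xi}|$ and Lemma~\ref{le4.3} that you already invoke for the diffusion piece.
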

\begin{proof}
For each~$a=1,\cdot\cdot\cdot,\Xi$, $t\in[0,T]$, define
$$
\Gamma^{a,\Xi}(t)=S^{a,\Xi}(t)-\bar{U}^{a,\Xi}(t)-D(S^{a,\Xi}(t-\tau))+D\big(\bar{U}^{a,\Xi}(t-\tau)\big).
$$
For the fixed $q\in[2,\frac{p}{2(c+1)}]$, due to Lemma \ref{l3.4} and Assumption \ref{a1}, we can derive that
\begin{align*}
\sup_{0\leq r\leq t}\big|S^{a,\Xi}(r)\!-\!\bar{U}^{a,\Xi}(r)\big|^{q}&\leq\lambda\sup_{0\leq r\leq t}\big|S^{a,\Xi}(r\!-\!\tau)\!-\!\bar{U}^{a,\Xi}(r\!-\!\tau)\big|^{q}\!+\!\frac{1}{(1-\lambda)^{q-1}}\sup_{0\leq r\leq t}|\Gamma^{a,\Xi}(r)|^{q}\nn\\
&\leq\lambda\sup_{-\tau\leq r\leq0}\big|S^{a,\Xi}(r)-\bar{U}^{a,\Xi}(r)\big|^{q}+\lambda\sup_{0\leq r\leq t}\big|S^{a,\Xi}(r)-\bar{U}^{a,\Xi}(r)\big|^{q}\nn\\
&~~~~+\frac{1}{(1-\lambda)^{q-1}}\sup_{0\leq r\leq t}|\Gamma^{a,\Xi}(r)|^{q},
\end{align*}
which implies
\begin{align}\label{eq4.21}
\mathbb{E}\Big[\sup_{0\leq r\leq t}\big|S^{a,\Xi}(r)-\bar{U}^{a,\Xi}(r)\big|^{q}\Big]\leq\frac{1}{(1-\lambda)^{q}}\mathbb{E}\big[\sup_{0\leq r\leq t}|\Gamma^{a,\Xi}(r)|^{q}\big].
\end{align}
Using the~It$\mathrm{\hat{o}}$~formula yields
\begin{align*}
|\Gamma^{a,\Xi}(t)|^{q}
&\leq q\int_{0}^{t}|\Gamma^{a,\Xi}(r)|^{q-2}\big(\Gamma^{a,\Xi}(r)\big)^{T}\Big\{b(S^{a,\Xi}(r),S^{a,\Xi}(r-\tau),\mathcal{L}_{r}^{S,\Xi})\\
&~~~~~~~~~~~~~~~~~-b_{\triangle}(U^{a,\Xi}(r),U^{a,\Xi}(r-\tau),\mathcal{L}_{r}^{U,\Xi})\Big\}\mathrm{d}r\\
&~~~~+\frac{q(q-1)}{2}\int_{0}^{t}|\Gamma^{a,\Xi}(r)|^{q-2}\Big|\sigma(S^{a,\Xi}(r),S^{a,\Xi}(r-\tau),\mathcal{L}_{r}^{S,\Xi})\\
&~~~~~~~~~~~~~~~~~-\sigma(U^{a,\Xi}(r),U^{a,\Xi}(r-\tau),\mathcal{L}_{r}^{U,\Xi})\Big|^{2}\mathrm{d}r\\
&~~~~+q\int_{0}^{t}|\Gamma^{a,\Xi}(r)|^{q-2}\big(\Gamma^{a,\Xi}(r)\big)^{T} \Big\{\sigma(S^{a,\Xi}(r),S^{a,\Xi}(r-\tau),\mathcal{L}_{r}^{S,\Xi})\\
&~~~~~~~~~~~~~~~~~-\sigma(U^{a,\Xi}(r),U^{a,\Xi}(r-\tau),\mathcal{L}_{r}^{U,\Xi})\Big\}\mathrm{d}B^{a}(r).
\end{align*}
This implies
\begin{align}\label{eq4.22}
\mathbb{E}\big[\sup_{0\leq r\leq t}|\Gamma^{a,\Xi}(r)|^{q}\big]\leq \sum_{l=1}^5 Q^{a,\Xi}_{l},
\end{align}
where
\begin{align*}
Q^{a,\Xi}_{1}&=q\mathbb{E}\bigg[\sup_{0\leq r\leq t}\int_{0}^{r}|\Gamma^{a,\Xi}(v)|^{q-2}\big(\Gamma^{a,\Xi}(v)\big)^{T}\Big\{b(S^{a,\Xi}(v),S^{a,\Xi}(v-\tau),\mathcal{L}_{v}^{S,\Xi})
\\
&~~~~~~~~~~~~~~~~~~~~~~~~-b(\bar{U}^{a,\Xi}(v),\bar{U}^{a,\Xi}(v-\tau),\mathcal{L}_{v}^{U,\Xi})
\Big\}\mathrm{d}v\bigg],\\
Q^{a,\Xi}_{2}&=q\mathbb{E}\bigg[\sup_{0\leq r\leq t}\int_{0}^{r}|\Gamma^{a,\Xi}(v)|^{q-2}\big(\Gamma^{a,\Xi}(v)\big)^{T}\Big\{b(\bar{U}^{a,\Xi}(v),\bar{U}^{a,\Xi}(v-\tau),\mathcal{L}_{v}^{U,\Xi})\\
&~~~~~~~~~~~~~~~~~~~~~~~~-b(U^{a,\Xi}(v),U^{a,\Xi}(v-\tau),\mathcal{L}_{v}^{U,\Xi})\Big\}\mathrm{d}v\bigg],\\
Q^{a,\Xi}_{3}&=q\mathbb{E}\bigg[\sup_{0\leq r\leq t}\int_{0}^{r}|\Gamma^{a,\Xi}(v)|^{q-2}(\Gamma^{a,\Xi}(v))^{T}
 \Big\{b(U^{a,\Xi}(v),U^{a,\Xi}(v-\tau),\mathcal{L}_{v}^{U,\Xi})\\
 &~~~~~~~~~~~~~~~~~~~~~~~~-b_{\triangle}(U^{a,\Xi}(v),U^{a,\Xi}(v-\tau),\mathcal{L}_{v}^{U,\Xi})\Big\}\mathrm{d}v\bigg],\\
Q^{a,\Xi}_{4}&=\frac{q(q-1)}{2}\mathbb{E}\bigg[\sup_{0\leq r\leq t} \int_{0}^{r}|\Gamma^{a,\Xi}(v)|^{q-2}
  \Big|\sigma(S^{a,\Xi}(v),S^{a,\Xi}(v-\tau),\mathcal{L}_{v}^{S,\Xi})\\
  &~~~~~~~~~~~~~~~~~~~~~~~~-
  \sigma(U^{a,\Xi}(v),U^{a,\Xi}(v-\tau),\mathcal{L}_{v}^{U,\Xi})\Big|^{2}\mathrm{d}v\bigg]
  \\
\end{align*}
and
\begin{align*}
Q^{a,\Xi}_{5}&=q\mathbb{E}\bigg[\sup_{0\leq r\leq t}\int_{0}^{r} |\Gamma^{a,\Xi}(v)|^{q-2}(\Gamma^{a,\Xi}(v))^{T} \Big\{\sigma(S^{a,\Xi}(v),S^{a,\Xi}(v-\tau),\mathcal{L}_{v}^{S,\Xi})\\
&~~~~~~~~~~~~~~~~~~~~~~~~-\sigma(U^{a,\Xi}(v),U^{a,\Xi}(v-\tau),\mathcal{L}_{v}^{U,\Xi})\Big\}\mathrm{d}B^{a}(v)\bigg].
 \end{align*}
Then, according to Young's inequality and Lemma~$\ref{le4.3}$, we have
\begin{align}\label{eq4.23}
Q^{a,\Xi}_{1}
&\leq C\mathbb{E}\bigg[\int^{t}_{0}\Big(\big|S^{a,\Xi}(r)-\bar{U}^{a,\Xi}(r)\big|^{q-2}+\big|D(S^{a,\Xi}(r-\tau))
-D\big(\bar{U}^{a,\Xi}(r-\tau)\big)\big|^{q-2}\Big)\nn\\
&~~~\times\Big(\big|S^{a,\Xi}(r)-\bar{U}^{a,\Xi}(r)\big|^{2}+\big|S^{a,\Xi}(r-\tau)-\bar{U}^{a,\Xi}(r-\tau)\big|^{2}
+\mathbb{W}^{2}_{2}(\mathcal{L}_{r}^{S,\Xi},\mathcal{L}_{r}^{U,\Xi})\Big)\mathrm{d}r\bigg]\nn\\
&\leq C\mathbb{E}\bigg[\!\int^{t}_{0}\Big(\big|S^{a,\Xi}(r)\!-\!\bar{U}^{a,\Xi}(r)\big|^{q}
\!+\!\big|S^{a,\Xi}(r\!-\!\tau)\!-\!\bar{U}^{a,\Xi}(r-\tau)\big|^{q}\!+\!\mathbb{W}^{q}_{q}(\mathcal{L}_{r}^{S,\Xi},\!\mathcal{L}_{r}^{U,\Xi})\Big)\mathrm{d}r\!\bigg]\nn\\
&\leq C \mathbb{E}\bigg[\int^{t}_{0}\big|S^{a,\Xi}(r)-\bar{U}^{a,\Xi}(r)\big|^{q}\mathrm{d}r
+\int^{t}_{0}\frac{1}{\Xi}\sum^{\Xi}_{j=1}\big|S^{j,\Xi}(r)-U^{j,\Xi}(r)\big|^{q}\mathrm{d}r\bigg] \nn\\
&\leq C\mathbb{E}\bigg[\int^{t}_{0}\big|S^{a,\Xi}(r)-\bar{U}^{a,\Xi}(r)\big|^{q}\mathrm{d}r
+ \int^{t}_{0}\frac{1}{\Xi}\sum^{\Xi}_{j=1}\big|S^{j,\Xi}(r)-\bar{U}^{j,\Xi}(r)\big|^{q}\mathrm{d}r \nn\\
&~~~~~~~~~~~~~+\int^{t}_{0}\frac{1}{\Xi}\sum^{\Xi}_{j=1}\big|\bar{U}^{j,\Xi}(r)-U^{j,\Xi}(r)\big|^{q}\mathrm{d}r\bigg] \nn\\
&\leq C \int^{t}_{0}\sup_{1\leq a\leq \Xi}\mathbb{E}\Big[\sup_{0\leq v\leq r}\big|S^{a,\Xi}(v)-\bar{U}^{a,\Xi}(v)\big|^{q}\Big]\mathrm{d}r+C\triangle^{\frac{q}{2}}.
\end{align}
By \eqref{eq3.2}, Assumption~$\ref{a2}$~and Lemma~$\ref{le4.2}$, we can infer that
\begin{align}\label{eq4.24}
Q^{a,\Xi}_{2}&\leq C\mathbb{E}\bigg[\int_{0}^{t}|\Gamma^{a,\Xi}(r)|^{q-1}
\Big(1+|\bar{U}^{a,\Xi}(r)|^{c}+|\bar{U}^{a,\Xi}(r-\tau)|^{c}+\!|U^{a,\Xi}(r)|^{c}+\!|U^{a,\Xi}(r-\tau)|^{c}\Big)\nn\\
&~~~~~~~~~~~~~~~~~~~~~~\times\Big(\big|\bar{U}^{a,\Xi}(r)-U^{a,\Xi}(r)\big|+\big|\bar{U}^{a,\Xi}(r-\tau)
-U^{a,\Xi}(r-\tau)\big|\Big)\mathrm{d}r\bigg]\nn\\
&\leq C\mathbb{E}\bigg[\!\sup_{0\leq r\leq t}\!|\Gamma^{a,\Xi}(r)|^{q-1}\!\int_{0}^{t}\!\Big(1\!+\!|\bar{U}^{a,\Xi}(r)|^{c}\!+\!|\bar{U}^{a,\Xi}(r\!-\!\tau)|^{c}\!+\!|U^{a,\Xi}(r)|^{c}\!+\!|U^{a,\Xi}(r\!-\!\tau)|^{c}\Big)\nn\\
&~~~~~~~~~~~~~~~~~~~~~~~\times\Big(\big|\bar{U}^{a,\Xi}(r)-U^{a,\Xi}(r)\big|+\big|\bar{U}^{a,\Xi}(r-\tau)-U^{a,\Xi}(r-\tau)\big|\Big)\mathrm{d}r\bigg]\nn\\
&\leq \frac{1}{4}\mathbb{E}\big[\sup_{0\leq r\leq t}|\Gamma^{a,\Xi}(r)|^{q}\big]\nn\\
&~~~~+ C\mathbb{E}\bigg[\Big\{\int_{0}^{t}\Big(1+|\bar{U}^{a,\Xi}(r)|^{c}+|\bar{U}^{a,\Xi}(r-\tau)|^{c}
+|U^{a,\Xi}(r)|^{c}+|U^{a,\Xi}(r-\tau)|^{c}\Big)\nn\\
&~~~~~~~~~~~~~~~~~~~\times\Big(\big|\bar{U}^{a,\Xi}(r)-U^{a,\Xi}(r)\big|+\big|\bar{U}^{a,\Xi}(r-\tau)-U^{a,\Xi}(r-\tau)\big|\Big)\mathrm{d}s\Big\}^{q}\bigg]\nn\\
&\leq \frac{1}{4}\mathbb{E}\big[\sup_{0\leq r\leq t}|\Gamma^{a,\Xi}(r)|^{q}\big] \nn\\
&~~~~+C\int_{0}^{t}\bigg(\mathbb{E}\Big[1+|\bar{U}^{a,\Xi}(r)|^{p}+|\bar{U}^{a,\Xi}(r-\tau)|^{p}+|U^{a,\Xi}(r)|^{p}+|U^{a,\Xi}(r-\tau)|^{p}\Big]\bigg)^{\frac{cq}{p}}\nn\\
&~~~~~~~~~~~~~~~~~~ \times\bigg(\mathbb{E}\bigg[\!\Big(\big|\bar{U}^{a,\Xi}(r)\!-\!U^{a,\Xi}(r)\big|
\!+\!\big|\bar{U}^{a,\Xi}(r\!-\!\tau)\!-\!U^{a,\Xi}(r\!-\!\tau)\big|\Big)^{\frac{pq}{p-cq}}\bigg]\bigg)^{\frac{p-cq}{p}}\mathrm{d}r\nn\\
&\leq \frac{1}{4}\mathbb{E}\big[\sup_{0\leq r\leq t}|\Gamma^{a,\Xi}(r)|^{q}\big] \nn\\
&~+\!C\bigg\{\!\int_{0}^{t}\!\Big(\!\mathbb{E}\Big[\!\big|\bar{U}^{a,\Xi}(r)\!-\!U^{a,\Xi}(r)\!\big|^{\frac{pq}{p-cq}}\Big]\!\Big)^{\frac{p-cq}{p}}\!\mathrm{d}r\!+\!\sup_{-\tau\leq r\leq0}\!\Big(\!\mathbb{E}\Big[\!\big|\bar{U}^{a,\Xi}(r)\!-\!U^{a,\Xi}(r)\big|^{\frac{pq}{p-cq}}\!\Big]\Big)^{\frac{p-cq}{p}}\!\bigg\}\nn\\
&\leq\frac{1}{4}\mathbb{E}\Big[\sup_{0\leq r\leq t}|\Gamma^{a,\Xi}(r)|^{q}\Big]+C\triangle^{\frac{q}{2}}.
\end{align}
One notices from~$(\ref{eq3.6})$~and~$(\ref{eq4.1})$~that
\begin{align*}
&\mathbb{E}\Big[\int_{0}^{t}\big|b(U^{a,\Xi}(r),U^{a,\Xi}(r-\tau),\mathcal{L}_{r}^{U,\Xi})
-b_{\triangle}(U^{a,\Xi}(r),U^{a,\Xi}(r-\tau),\mathcal{L}_{r}^{U,\Xi})\big|^{q}\mathrm{d}r\Big]\nn\\
&\leq\triangle^{\alpha q}\int_{0}^{t}
\mathbb{E}\bigg[\frac{|b(U^{a,\Xi}(r),U^{a,\Xi}(r-\tau),\mathcal{L}_{r}^{U,\Xi})|^{2q}}{\Big(1+\triangle^{\alpha}|b(U^{a,\Xi}(r),U^{a,\Xi}(r-\tau),\mathcal{L}_{r}^{U,\Xi})|\Big)^{q}}\bigg]\mathrm{d}r\nn\\
&\leq C\triangle^{\alpha q}\int_{0}^{t}
\mathbb{E}\bigg[\Big(1+|U^{a,\Xi}(r)|^{c+1}+|U^{a,\Xi}(r-\tau)|^{c+1}+W_{2}(\mathcal{L}_{r}^{U,\Xi})\Big)^{2q}\bigg]\mathrm{d}r\nn\\
&\leq C\triangle^{\alpha q}.
\end{align*}
By Young's inequality and the above inequality, we obtain that
\begin{align}\label{eq4.25}
Q^{a,\Xi}_{3}&\leq C\mathbb{E}\Big[\sup_{0\leq r\leq t}|\Gamma^{a,\Xi}(r)|^{q-1}
\int_{0}^{t}\big|b(U^{a,\Xi}(r),U^{a,\Xi}(r-\tau),\mathcal{L}_{r}^{U,\Xi})\nn\\
&~~~~~~~~~~~~~~~~~~~~~~~~~~~~~~~~~~~~-b_{\triangle}(U^{a,\Xi}(r),U^{a,\Xi}(r-\tau),\mathcal{L}_{r}^{U,\Xi})
\big|\mathrm{d}r\Big]\nn\\
&\leq\frac{1}{4}\mathbb{E}\big[\sup_{0\leq r\leq t}|\Gamma^{a,\Xi}(r)|^{q}\big]\nn\\
&~~~~+C\mathbb{E}\Big[\int_{0}^{t}\big|b(U^{a,\Xi}(r),U^{a,\Xi}(r-\tau),\mathcal{L}_{r}^{U,\Xi})
-b_{\triangle}(U^{a,\Xi}(r),U^{a,\Xi}(r-\tau),\mathcal{L}_{r}^{U,\Xi})\big|^{q}\mathrm{d}r\Big]\nn\\
&\leq\frac{1}{4}\mathbb{E}\big[\sup_{0\leq r\leq t}|\Gamma^{a,\Xi}(r)|^{q}\big] + C\triangle^{\alpha q}.
\end{align}
Due to Assumption~$\ref{a1}$, $(\ref{eq4.23})$ and~$(\ref{eq3.2})$, by applying Young's inequality and~$\mathrm{H\ddot{o}lder}$'s~inequality, we derive that
\begin{align}\label{eq4.26}
Q^{a,\Xi}_{4}&\leq C\mathbb{E}\bigg[\int^{t}_{0}\Big(\big|S^{a,\Xi}(r)-\bar{U}^{a,\Xi}(r)\big|^{q-2}+\big|D(S^{a,\Xi}(r-\tau))-D\big(\bar{U}^{a,\Xi}(r-\tau)\big)\big|^{q-2}\Big)\nn\\
&~~~~~~~~\times\Big(\big|S^{a,\Xi}(r)-U^{a,\Xi}(r)\big|^{2}\!+\!\big|S^{a,\Xi}(r\!-\tau)-U^{a,\Xi}(r\!-\tau)\big|^{2}
\!+\mathbb{W}^{2}_{2}(\mathcal{L}_{r}^{S,\Xi},\mathcal{L}_{r}^{U,\Xi})\Big)\mathrm{d}r\bigg]\nn\\
&\leq C\mathbb{E}\bigg[\int^{t}_{0}\Big(\big|S^{a,\Xi}(r)-\bar{U}^{a,\Xi}(r)\big|^{q}+\big|S^{a,\Xi}(r-\tau)-\bar{U}^{a,\Xi}(r-\tau)\big|^{q}\nn\\
&~~~~+\big|S^{a,\Xi}(r)-U^{a,\Xi}(r)\big|^{q}+\big|S^{a,\Xi}(r-\tau)-U^{a,\Xi}(r-\tau)\big|^{q}
+\mathbb{W}^{q}_{2}(\mathcal{L}_{r}^{S,\Xi},\mathcal{L}_{r}^{U,\Xi})\Big)\mathrm{d}r\bigg]\nn\\
&\leq C\mathbb{E}\bigg[\int^{t}_{0}\Big(\big|S^{a,\Xi}(r)-\bar{U}^{a,\Xi}(r)\big|^{q}+\big|S^{a,\Xi}(r-\tau)-\bar{U}^{a,\Xi}(r-\tau)\big|^{q}\nn\\
&~~~~+\big|\bar{U}^{a,\Xi}(r)-U^{a,\Xi}(r)\big|^{q}+\big|\bar{U}^{a,\Xi}(r-\tau)-U^{a,\Xi}(r-\tau)\big|^{q}
+\mathbb{W}^{q}_{q}(\mathcal{L}_{r}^{S,\Xi},\mathcal{L}_{r}^{U,\Xi})\Big)\mathrm{d}r\bigg]\nn\\
&\leq C\mathbb{E}\bigg[\int^{t}_{0}\Big(\big|S^{a,\Xi}(r)\!-\!\bar{U}^{a,\Xi}(r)\big|^{q}\!+\!\big|S^{a,\Xi}(r\!-\!\tau)\!-\bar{U}^{a,\Xi}(r-\!\tau)\big|^{q}
\!+\!\mathbb{W}^{q}_{q}(\mathcal{L}_{r}^{S,\Xi},\mathcal{L}_{r}^{U,\Xi})\!\Big)\mathrm{d}r\!\bigg]\nn\\
&~~~~+C\mathbb{E}\Big[\int^{t}_{0}\big|\bar{U}^{a,\Xi}(r)\!-\!U^{a,\Xi}(r)\big|^{q}\mathrm{d}r\Big]
+C\sup_{-\tau\leq r\leq0}\mathbb{E}\Big[\big|\bar{U}^{a,\Xi}(r)-U^{a,\Xi}(r)\big|^{q}\Big] \nn\\
&\leq C \int^{t}_{0}\sup_{1\leq a\leq \Xi}\mathbb{E}\Big[\sup_{0\leq v\leq r}\big|S^{a,\Xi}(v)-\bar{U}^{a,\Xi}(v)\big|^{q}\Big]\mathrm{d}r+C\triangle^{\frac{q}{2}}.
\end{align}
By the~BDG~inequality, we obtain
\begin{align}\label{eq4.27}
Q^{a,\Xi}_{5}
&\leq C\mathbb{E}\bigg[\!\Big(\int_{0}^{t}\!|\Gamma^{a,\Xi}(r)|^{2q-2}\nn\\
&~~~~~~~~~\times\big|\sigma(S^{a,\Xi}(r),\!S^{a,\Xi}(r\!-\!\tau),\mathcal{L}_{r}^{S,\Xi})\!-\!\sigma(U^{a,\Xi}(r),\!U^{a,\Xi}(r\!-\!\tau),\mathcal{L}_{r}^{U,\Xi})\big|^{2}
\mathrm{d}s\!\Big)^{\frac{1}{2}}\!\bigg]\nn\\
&\leq C\mathbb{E}\bigg[\sup_{0\leq r\leq t}|\Gamma^{a,\Xi}(r)|^{q-1}\nn\\
&~~~~~~~~~\times\Big(\int_{0}^{t}\big|\sigma(S^{a,\Xi}(r),S^{a,\Xi}(r\!-\!\tau),\mathcal{L}_{r}^{S,\Xi})\!-\!\sigma(U^{a,\Xi}(r),U^{a,\Xi}(r\!-\!\tau),\mathcal{L}_{r}^{U,\Xi})\big|^{2}\mathrm{d}r\Big)^{\frac{1}{2}}\bigg]\nn\\
&\leq\frac{1}{4}\mathbb{E}\big[\sup_{0\leq r\leq t}|\Gamma^{a,\Xi}(r)|^{q}\big]\nn\\
&~~~~+C\mathbb{E} \bigg[\!\Big(\int_{0}^{t}\big|\sigma(S^{a,\Xi}(r),S^{a,\Xi}(r-\tau),
\mathcal{L}_{r}^{S,\Xi})\!-\!\sigma(U^{a,\Xi}(r),U^{a,\Xi}(r-\tau),\mathcal{L}_{r}^{U,\Xi})\big|^{2}
\mathrm{d}r\Big)^{\frac{q}{2}}\!\bigg]\nn\\
&\leq\frac{1}{4}\mathbb{E}\big[\sup_{0\leq r\leq t}|\Gamma^{a,\Xi}(r)|^{q}\big]\nn\\
&~~~~+\!C\mathbb{E}\bigg[\!\int_{0}^{t}\!\Big(\big|\!S^{a,\Xi}(r)\!-\!U^{a,\Xi}(r)\!\big|^{2}\!+\!\big|\!S^{a,\Xi}(r\!-\!\tau)\!-\!U^{a,\Xi}(r\!-\!\tau)\big|^{2}
\!+\!\mathbb{W}^{2}_{2}(\mathcal{L}_{r}^{S,\Xi},\!\mathcal{L}_{r}^{U,\Xi})\!\Big)^{\frac{q}{2}}\!\mathrm{d}r\!\bigg] \nn\\
&\leq\frac{1}{4}\mathbb{E}\big[\sup_{0\leq r\leq t}|\Gamma^{a,\Xi}(r)|^{q}\big]
\!+\!C\mathbb{E}\bigg[\int^{t}_{0}\Big(\big|S^{a,\Xi}(r)\!-\!\bar{U}^{a,\Xi}(r)\big|^{q}\!+\!\big|S^{a,\Xi}(r-\tau)\!-\!\bar{U}^{a,\Xi}(r-\tau)\big|^{q}\nn\\
&~~~~+\big|\bar{U}^{a,\Xi}(r)-U^{a,\Xi}(r)\big|^{q}+\big|\bar{U}^{a,\Xi}(r-\tau)-U^{a,\Xi}(r-\tau)\big|^{q}
+\mathbb{W}^{q}_{q}(\mathcal{L}_{r}^{S,\Xi},\mathcal{L}_{r}^{U,\Xi})\Big)\mathrm{d}r\bigg]\nn\\
&\leq\frac{1}{4}\mathbb{E}\big[\sup_{0\leq r\leq t}|\Gamma^{a,\Xi}(r)|^{q}\big]\!+\!C \int^{t}_{0}\sup_{1\leq a\leq \Xi}\mathbb{E}\Big[\sup_{0\leq v\leq r}\big|S^{a,\Xi}(v)\!-\!\bar{U}^{a,\Xi}(v)\big|^{q}\Big]\mathrm{d}r+C\triangle^{\frac{q}{2}} .
\end{align}
Inserting  \eqref{eq4.23}-\eqref{eq4.27} into  \eqref{eq4.22} yields
\begin{align}\label{eq4.28}                                                                                                                                                    &\mathbb{E}\big[\sup_{0\leq r\leq t}|\Gamma^{a,\Xi}(r)|^{q}\big]\leq C \int^{t}_{0}\sup_{1\leq a\leq \Xi}\mathbb{E}\Big[\sup_{0\leq v\leq r}|S^{a,\Xi}(v)-\bar{U}^{a,\Xi}(v)|^{q}\Big]\mathrm{d}r+C\triangle^{\alpha q} .
\end{align}
Hence, thanks to~$(\ref{eq4.21})$~and the Gronwall inequality, we get the desired assertion
$$
\sup_{1\leq a\leq \Xi}\mathbb{E}\Big[\sup_{0\leq t\leq T}\big|S^{a,\Xi}(t)-\bar{U}^{a,\Xi}(t)\big|^{q}\Big]\leq C\triangle^{\alpha q}.
$$
\end{proof}

Finally,   the convergence rate between the numerical solution~$U^{a,\Xi}(t)$~and the exact solution of~$(\ref{eq3.33})$~ follows from Lemma~$\ref{le4.3}$~and Lemma~$\ref{le4.4}$~immediately.
\begin{theorem}\label{th4.5}
Under Assumptions \ref{a1} and \ref{a2} with~$p\geq4(c+1)$,  for any~$q\in[2,\frac{p}{2(c+1)}]$,
$$
\sup_{1\leq a\leq \Xi}\sup_{0\leq t\leq T}\mathbb{E}\big[|S^{a,\Xi}(t)-U^{a,\Xi}(t)|^{q}\big]\leq C\triangle^{\alpha q},~T\geq0. $$
\end{theorem}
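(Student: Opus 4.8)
The plan is to exploit the triangle inequality together with the two estimates just established, since the exact particle solution $S^{a,\Xi}$ and the piecewise-constant numerical solution $U^{a,\Xi}$ are linked through the auxiliary continuous interpolation $\bar{U}^{a,\Xi}$. First I would write, for each $1\le a\le\Xi$ and $t\in[0,T]$,
\begin{align*}
|S^{a,\Xi}(t)-U^{a,\Xi}(t)|^{q}\le 2^{q-1}|S^{a,\Xi}(t)-\bar{U}^{a,\Xi}(t)|^{q}+2^{q-1}|\bar{U}^{a,\Xi}(t)-U^{a,\Xi}(t)|^{q},
\end{align*}
and recall from \eqref{eq4.5} that $U^{a,\Xi}(t)=U^{a,\Xi}_{n}$ on each grid cell $[t_n,t_{n+1})$, so the second term is exactly the quantity controlled in Lemma~\ref{le4.3}.

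For the first term, taking $\sup_{0\le t\le T}$ inside the expectation and invoking Lemma~\ref{le4.4} (which applies precisely because $p\ge4(c+1)$ and $q\in[2,\frac{p}{2(c+1)}]$) gives $\sup_{1\le a\le\Xi}\mathbb{E}[\sup_{0\le t\le T}|S^{a,\Xi}(t)-\bar{U}^{a,\Xi}(t)|^{q}]\le C\triangle^{\alpha q}$. For the second term I would downgrade the $p$th-moment bound of Lemma~\ref{le4.3} to a $q$th-moment bound by Jensen's inequality: since $q\le \frac{p}{2(c+1)}<p$, on any cell one has
\begin{align*}
\mathbb{E}\big[\sup_{t_n\le t<t_{n+1}}|\bar{U}^{a,\Xi}(t)-U^{a,\Xi}_{n}|^{q}\big]\le\Big(\mathbb{E}\big[\sup_{t_n\le t<t_{n+1}}|\bar{U}^{a,\Xi}(t)-U^{a,\Xi}_{n}|^{p}\big]\Big)^{\frac{q}{p}}\le\big(C\triangle^{\frac{p}{2}}\big)^{\frac{q}{p}}=C\triangle^{\frac{q}{2}}.
\end{align*}
Taking the supremum over $a$ and over the cells then bounds $\sup_{1\le a\le\Xi}\sup_{0\le t\le T}\mathbb{E}[|\bar{U}^{a,\Xi}(t)-U^{a,\Xi}(t)|^{q}]$ by $C\triangle^{q/2}$.

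Finally I would merge the two contributions. Because $\alpha\in(0,\tfrac12]$ and $\triangle\in(0,1)$, one has $\triangle^{q/2}\le\triangle^{\alpha q}$, so the interpolation error is dominated by the taming error and we obtain
\begin{align*}
\sup_{1\le a\le\Xi}\sup_{0\le t\le T}\mathbb{E}\big[|S^{a,\Xi}(t)-U^{a,\Xi}(t)|^{q}\big]\le C\triangle^{\alpha q}+C\triangle^{\frac{q}{2}}\le C\triangle^{\alpha q},
\end{align*}
which is the claim. The only step requiring care is the bookkeeping of exponents: one must check that $q\le p$ so that Jensen legitimately downgrades the $p$th moment, and that $\alpha q\le q/2$ so that the interpolation error (of order $\triangle^{q/2}$) does not dominate the taming error (of order $\triangle^{\alpha q}$); both are immediate from the standing hypotheses $q\in[2,\frac{p}{2(c+1)}]$ and $\alpha\le\frac12$. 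No new stochastic estimate is needed, since the genuine analytic work has already been carried out in Lemmas~\ref{le4.3} and~\ref{le4.4}.
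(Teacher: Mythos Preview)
Your proposal is correct and follows essentially the same approach as the paper, which simply states that the result ``follows from Lemma~\ref{le4.3} and Lemma~\ref{le4.4} immediately'' without spelling out the triangle-inequality splitting or the Jensen downgrade. Your write-up merely makes explicit the routine bookkeeping (that $q<p$ so Jensen applies, and that $\alpha\le\tfrac12$ so $\triangle^{q/2}\le\triangle^{\alpha q}$) which the paper leaves implicit.
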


In the above theorem choosing $\alpha= {1}/{2}$, we can get the optimal convergence rate $1/2$. Therefore,  the desired rate of convergence follows directly from Lemma~$\ref{le3.6}$.
\begin{theorem}\label{th4.6}
Under Assumptions \ref{a1} and \ref{a2} with~$p\geq4(c+1)$,   for any~$T\geq0$,~$\Xi\geq2$,
\begin{align*}
\displaystyle\sup_{0\leq t\leq T}\mathbb{E}\Big[\big|S^{a}(t)-U^{a,\Xi}(t)\big|^{2}\Big]\leq C\left\{
\begin{array}{lll}
\Xi^{-1/2}+\triangle^{2\alpha},~~~&1\leq d<4,\\
\Xi^{-1/2}\log(\Xi)+\triangle^{2\alpha},~~~&d=4,\\
\Xi^{-d/2}+\triangle^{2\alpha},~~~&4<d.
\end{array}
\right.
\end{align*}

\end{theorem}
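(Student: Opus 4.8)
The plan is to derive the bound by splitting the total error into the propagation-of-chaos error and the one-particle numerical error, both of which have already been controlled by the preceding results. For each fixed $1\le a\le\Xi$ and $t\in[0,T]$, I would first write the decomposition
\begin{align*}
S^{a}(t)-U^{a,\Xi}(t)=\big(S^{a}(t)-S^{a,\Xi}(t)\big)+\big(S^{a,\Xi}(t)-U^{a,\Xi}(t)\big),
\end{align*}
where $S^{a,\Xi}(t)$ is the solution of the interacting particle system $(\ref{eq3.33})$. Applying the elementary inequality $|x+y|^{2}\le 2|x|^{2}+2|y|^{2}$ and taking expectations then gives
\begin{align*}
\mathbb{E}\big[|S^{a}(t)-U^{a,\Xi}(t)|^{2}\big]
\le 2\,\mathbb{E}\big[|S^{a}(t)-S^{a,\Xi}(t)|^{2}\big]
+2\,\mathbb{E}\big[|S^{a,\Xi}(t)-U^{a,\Xi}(t)|^{2}\big].
\end{align*}

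Next I would bound the two terms separately. For the first term, since the constant $c>0$ in Assumption $\ref{a2}$ forces $p\ge 4(c+1)>4$, the hypothesis $p>4$ of Lemma $\ref{le3.6}$ is met, and that lemma supplies exactly the three $\Xi$-dependent rates ($\Xi^{-1/2}$, $\Xi^{-1/2}\log(\Xi)$, $\Xi^{-d/2}$ according to the value of $d$), uniformly in $1\le a\le\Xi$ and $t\in[0,T]$. For the second term, I would invoke Theorem $\ref{th4.5}$ with the choice $q=2$; the admissibility $q\in[2,\tfrac{p}{2(c+1)}]$ is guaranteed precisely by $p\ge 4(c+1)$, and the theorem yields $\mathbb{E}[|S^{a,\Xi}(t)-U^{a,\Xi}(t)|^{2}]\le C\triangle^{2\alpha}$, again uniformly in $a$ and $t$.

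Combining the two bounds and taking the supremum over $t\in[0,T]$ then delivers the claimed estimate, with the constant $C$ absorbing the factors of $2$. The only point requiring care is the bookkeeping on the exponents: one must check that $q=2$ lies in the admissible range of Theorem $\ref{th4.5}$ and that $p>4$ so that Lemma $\ref{le3.6}$ applies, both of which are immediate consequences of the standing hypothesis $p\ge 4(c+1)$. There is no genuine analytic obstacle at this stage, since all the hard work---the It\^o-formula estimates, the taming of the super-linear drift, and the Wasserstein control of the empirical measure via Lemma $\ref{l3.2}$---has already been carried out inside Lemma $\ref{le3.6}$ and Theorem $\ref{th4.5}$; the present statement is essentially their corollary.
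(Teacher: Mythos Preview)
Your proposal is correct and follows exactly the approach indicated in the paper: the authors state that Theorem~\ref{th4.6} ``follows directly from Lemma~\ref{le3.6}'' after having established Theorem~\ref{th4.5}, which is precisely your decomposition into the propagation-of-chaos term and the one-particle numerical term. Your verification that $p\ge4(c+1)$ ensures both $p>4$ (for Lemma~\ref{le3.6}) and $q=2\in[2,p/(2(c+1))]$ (for Theorem~\ref{th4.5}) is the only bookkeeping required, and you have done it correctly.
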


\section{ Numerical Example}
Consider the scalar MV-NSDDE (refer to \cite[pp.609, Remark 2.3]{TC2018})
\begin{align}\label{eq5.1}
\mathrm{d}\big(S(t)+\beta S(t-\tau)\big)&=\Big(S(t)-S^{3}(t)+\beta S(t-1)-\beta^{3}S^{3}(t-1)+\mathbb{E}[S(t)]\Big)\mathrm{d}t\nn\\
&~~~~+\big(S(t)+\beta S(t-\tau)\big)\mathrm{d}B(t),~~~~~~~~~~~~~t\geq0.
\end{align}
The initial data~is~$S(t)=\xi(t)=t,$ $ t\in[-\tau,0]$. Let~$\tau= {1}/{32}$,~$T=1$~and~$\beta= {1}/{2}$.
\begin{figure}[!htpb]
  \centering
\includegraphics[width=12cm,height=7.2cm]{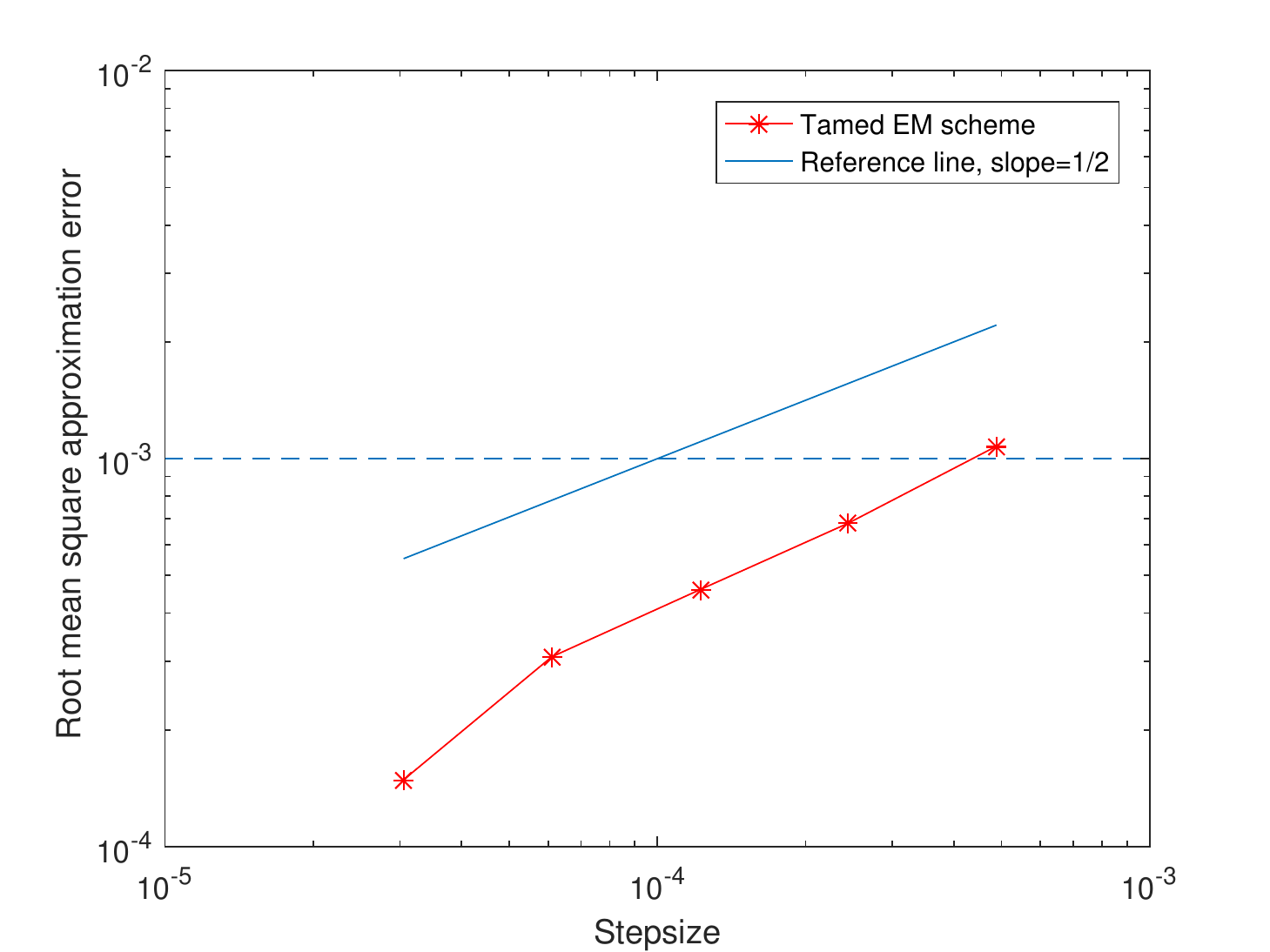}
  \caption{The red one is the root sample mean square
  error~$\big(\mathbb{E}[|S(T)-U(T)|^{2}]\big)^{\frac{1}{2}}$~between the exact solution~$S(T)$ and the numerical solution~$U(T)$ with $\Xi=1000$, as a function of~$\triangle\in\{2^{-15},2^{-14},2^{-13},2^{-12},2^{-11}\}$, while the blue one is the reference line with slope $ {1}/{2}$.}
\label{figure1}
\end{figure}
By computation, one observes that~$(\ref{eq3.2})$~and Assumptions~$\ref{a1}$,~$\ref{a2}$~hold with~all $p\geq2$~and~$c=2$. Thus by virtue of Theorem \ref{th3.5} it follows that $(\ref{eq5.1})$~has a unique solution with this initial data.

Furthermore, let $\alpha= {1}/{2}$, and  fix $\triangle\in(0,1/32)$ such that $n_{0}= {1}/{(32\triangle)}$ is an integer. By Theorem~$\ref{th4.6}$, the approximation solution of  the tamed scheme~$(\ref{eq4.2})$ converges to the exact solution of $(\ref{eq5.1})$  with error estimate in stepsize~$\triangle$ under the sense of mean square. For the numerical experiments we take  the numerical solution with the small size $\triangle=2^{-16}$~as the exact solution $S(\cdot)$  of $(\ref{eq5.1})$.

Let $\Xi=1000$. The Figure~$\ref{figure1}$~shows the root mean square approximation error~$\big(\mathbb{E}[|S(T)-U(T)|^{2}]\big)^{\frac{1}{2}}$~between the exact solution~$S(T)$ of $(\ref{eq5.1})$ and the numerical solution~$U(T)$~of the tamed EM scheme~$(\ref{eq4.2})$, as a function of stepsize~$\triangle\in\{2^{-15},2^{-14},2^{-13},2^{-12},2^{-11}\}$. We observe that this numerical solution~$U(T)$~performs well to approximate the exact solution to~$(\ref{eq5.1})$~with almost  ~$1/2$ order  convergence rate.
\qed

\section*{ Acknowledgements}
The research of the first author was  supported in part by the National Natural Science Foundation of China (11971096), the Natural Science Foundation of Jilin Province (YDZJ202101ZYTS154), the Education Department of Jilin Province (JJKH20211272KJ), the Fundamental Research Funds for the
Central Universities.


\begin{thebibliography}{99}

\bibitem{AF2002} Antonelli, F. and Kohatsu-Higa, A., Rate of convergence of a particle method to the solution of the McKean-Vlasov equation, Ann. Appl. Probab.,
   12 (2) (2002), 423-476.


\bibitem{JX2019} Bao, J. and Huang, X., Approximations of Mckean-Vlasov SDEs with irregular coefficients, https://arxiv.org/pdf/1905.08522.pdf, 2019.

\bibitem{B21} Bao, J., Reisinger, C.,  Ren, P.,  Stockinger, W.,  First-order convergence of Milstein schemes for McKean-Vlasov equations and interacting particle systems. Proc. A. 477 (2021),  20200258, 27 pp.

\bibitem{BK20} Biswas, S., Kumar, C., Reis, G.D. and Reisinger, C.,  Well-posedness and tamed Euler schemes for McKean-Vlasov equations driven by L\'evy noise. arXiv preprint arXiv:2020.08585

\bibitem{TD1996} Bossy, M. and Talay, D., Convergence rate for the approximation of the limit law of weakly interacting particles: application to the Burgers equation, Ann. Appl. Probab., 6(1996), 818-861.

\bibitem{BM1997} Bossy, M. and Talay, D., A stochastic particle method for the McKean-Vlasov and the Burgers equation, Math. Comp., 66 (1997), 157-192.

\bibitem{BBS2010} Boufoussi, B. and Hajji, S., Successive approximation of neutral functional stochastic differential equations with jumps, Statist. Probab. Lett., 80 (5-6) (2010), 324-332.


\bibitem{CR2018} Carmona, R. and Delarue, F., Probabilistic theory of mean field games with applications I, Switzerland: Springer International Publishing, 2018.

\bibitem{CD2018} Crisan, D. and McMurray, E., Smoothing properties of Mckean-Vlasov SDEs, Probab. Theory Related Fields, 171(2018), 97-148.



\bibitem{GSG2018} Dos-Reis, G., Engelhardt, S. and Smith, G., Simulation of McKean-Vlasov SDEs with super linear growth, https://arxiv.org/pdf/1808.05530.pdf, 2019.

\bibitem{DRG2019} Dos-Reis, G., Salkeld, W. and Tugaut, J., Freidlin-Wentzell LDP in path space for McKean-Vlasov equations and the functional iterated logarithm law, Ann. Appl. Probab., 29 (3) (2019), 1487-1540.

\bibitem{EA2019} Eberle, A., Guillin, A. and Zimmer, R., Quantitative Harris-type theorems for diffusions and McKean-Vlasov processes, TTrans. Amer. Math. Soc., 371(2019), 7135-7173.






\bibitem{HY2020} Huang, X. and Yuan, C., Comparison theorem for distribution dependent neutral SFDEs, J. Evol. Equ., 20 (2020), 1-18.

\bibitem{H2011} Hutzenthaler, M., Jentzen, A. and Kloeden, P. E., Strong and weak divergence in finite time of Euler's method for stochastic differential equations with non-globally Lipschitz continuous coefficients, Proc. A., 467(2011), 1563-1576.

\bibitem{HM2012} Hutzenthaler, M., Jentzen, A. and Kloeden, P. E., Strong convergence of an explicit numerical method for SDEs with nonglobally Lipschitz continuous coefficients,
    Ann. Appl. Probab., 22 (4) (2012), 1611-1641.


\bibitem{JSY2017}Ji, Y., Song, Q. and Yuan, C., Neutral stochastic differential delay equations with locally monotone coefficients, Dyn. Contin. Discrete Impuls. Syst. Ser. A Math. Anal., 24(2017), 195-217.


\bibitem{KPE1992} Kloeden, P. E. and Platen, E., Numerical solution of stochastic differential equations, Berlin Heidelberg: Springer-verlag, 1992.

\bibitem{LD2018} Lacker, D., On a strong form of propagation of chaos for McKean-Vlasov equations,  Electron. Commun. Probab., 23 (45) (2018), 1-11.


\bibitem{LXY2019} Li, X., Mao, X. and Yin, G., Explicit numerical approximations for stochastic differential equations in finite and infinite horizons: truncation methods, convergence in $p$th moment and stability, IMA Journal of Numerical Analysis, 39 (2) (2019), 847-892.


\bibitem{MX2007} Mao, X., Stochastic differential equations and applications, Horwood Publishing, Chichester, UK,  2007.



\bibitem{MHP1966} McKean, H. P., A class of Markov processes associated with nonlinear parabolic equations, PProc. Natl. Acad. Sci. USA, 56(1966), 1907-1911.

\bibitem{MHP1967} McKean, H. P., Propagation of chaos for a class of non-linear parabolic equations, In: Lecture Series in Differential Equations,  2 (1) (1967), 41-57.

\bibitem{MHP1975} McKean, H. P., Fluctuations in the kinetic theory of gases, Comm. Pure Appl. Math., 28 (4) (1975), 435-455.

\bibitem{MGN1995} Milstein, G. N., Numerical integration of stochastic differential equations. Netherlands: Springer, 1995.







\bibitem{RW19}  Ren, P., Wu, J.L., Least squares estimator for path-dependent McKean-Vlasov SDEs via discrete-time observations. Acta Math. Sci. Ser. B (Engl. Ed.) 39 (2019), 691-716.



\bibitem{SS2016} Sabanis, S., Euler approximations with varying coefficients: the case of superlinearly growing diffusion 
    26 (4) (2016), 2083-2105.

\bibitem{TC2018} Tan, L. and Yuan, C., Strong convergence of a tamed theta scheme for NSDDEs with one-sided Lipschitz drift,  Appl. Math. Comput., 338 (2018), 607-623.

\bibitem{VC2008} Villani, C., Optimal transport: old and new, Berlin Heidelberg: Springer Science \& Business Media, 2008.

\bibitem{WFY2018} Wang, F., Distribution dependent SDEs for Landau type equations, Stochastic Process. Appl., 128 (2018), 595-621.


\end{thebibliography}
\end{document}